\theoremstyle{theorem}
\newtheorem{thm}{Theorem}
\newtheorem{theorem}[thm]{Theorem}
\newtheorem{lemma}[thm]{Lemma}
\newtheorem{prop}[thm]{Proposition}
\newtheorem{proposition}[thm]{Proposition}
\newtheorem{cor}[thm]{Corollary}
\theoremstyle{definition}
\newtheorem{defn}[thm]{Definition}
\newtheorem{definition}[thm]{Definition}
\newtheorem{ex}[thm]{Example}
\newtheorem{example}[thm]{Example}
\newtheorem{remark}[thm]{Remark}
\numberwithin{thm}{section}
\def\m@th{\mathsurround\z@}
\def\cases#1{\left\{\,\vcenter{\normalbaselines\m@th
    \ialign{$##\hfil$&\quad##\hfil\crcr#1\crcr}}\right.}
\def\hang{\hangindent 24pt}
\def\d@nger{\medbreak\begingroup\clubpenalty=10000
  \def\par{\endgraf\endgroup\medbreak} %
  \noindent\hang\hangafter=-2
  \hbox to0pt{\hskip-\hangindent\dbend\hfill}}
\outer\def\danger{\d@nger}
\newcommand{\zz}{\mathbb{Z}}
\newcommand{\kk}{\mathbb{K}}
\renewcommand{\ss}{\mathfrak{S}}
\newcommand{\pro}{\operatorname{Pro}}
\newcommand{\rev}{\operatorname{rev}}
\newcommand{\tog}{\operatorname{Tog}}
\newcommand{\row}{\operatorname{Row}}
\newcommand{\fix}{\operatorname{Fix}}
\newcommand{\ra}{\rightarrow}
\newcommand{\sm}{\setminus}
\definecolor{green}{HTML}{006600}
\definecolor{orange}{HTML}{FF6200}
\definecolor{purple}{HTML}{990099}
\newcommand{\eso}{\EuScript{O}}
\newcommand{\cali}{\mathcal{I}}
\newcommand{\call}{\mathcal{L}}
\newcommand{\calp}{\mathcal{P}}
\newcommand{\cals}{\mathcal{S}}
\newcommand{\calt}{\mathcal{T}}
\newcommand{\calz}{\mathcal{Z}}
\begin{document}

\title{Toggling Independent Sets of a Path Graph}
\author[Joseph]{Michael Joseph}
\address{Department of Technology and Mathematics, Dalton State College, 650 College Dr., Dalton, GA 30720, USA}
\email{mjjoseph@daltonstate.edu}
\author[Roby]{Tom Roby}
\address{Department of Mathematics, University of Connecticut, Storrs, CT 06269-1009, USA}
\email{tom.roby@uconn.edu}
\date{\today}

\subjclass[2010]{05E18}

\begin{abstract}
This paper explores the orbit structure and homomesy (constant averages over orbits)
properties of certain actions of toggle groups on the collection of independent sets of a
path graph.  In particular we prove a generalization of a homomesy conjecture of Propp
that for the action of a ``Coxeter element'' of vertex toggles,
the difference of indicator functions of symmetrically-located vertices is 0-mesic.
Then we use our analysis to show facts about orbit sizes that are easy to conjecture but
nontrivial to prove.
Besides its intrinsic interest, this particular combinatorial dynamical system is valuable in
providing an interesting example of (a) homomesy in a context where large orbit sizes make a
cyclic sieving phenomenon unlikely to exist, (b) the use of Coxeter theory to greatly
generalize the set of actions for which results hold, and (c) the usefulness of Striker's
notion of generalized toggle groups.\vspace{1 ex}

\noindent\textbf{Keywords: }Burnside's Lemma, composition, Coxeter element, homomesy, independent set, involution, orbit, path graph, promotion, rowmotion, toggle group, zigzag poset.
\end{abstract}

\maketitle
\vspace{-0.333 in}
\section{Introduction}
The past several years have seen renewed interest in the area of \emph{dynamical algebraic
combinatorics}, where one considers group actions on sets of discrete combinatorial objects
and looks for interesting properties of their orbits or orders.  These include such actions as
\emph{promotion} (on Young tableaux or posets) and \emph{rowmotion} of posets, but there are
many others.  Many of these can be built up as a sequence of simple involutions, as is the
main action we study here on the collection of independent sets of a path graph.  

Our goal is to understand the orbit structure and \emph{homomesy} (constant averages over
orbits) of this discrete dynamical system.  In particular we prove a conjecture of Propp
that, with respect to the action ``toggle once at each vertex from left to right'', the
difference of indicator functions of symmetrically-located vertices is 0-mesic
(Theorem~\ref{indepsethomomesy}).  By leveraging some basic theory of Coxeter groups, we
generalize this result to apply to \emph{any} Coxeter element in the toggle group.  Our
analysis allows us to deduce facts about orbit sizes that are apparent from the numerical
data, but seem difficult to prove directly.  

This particular combinatorial dynamical system serves as a valuable case study in
several respects.  First, it provides an interesting example of the homomesy phenomenon in a context where
unwieldy orbit sizes suggest the lack of a natural cyclic sieving phenomenon (CSP) in the sense of Reiner, Stanton, and
White~\cite{csp}.  Many combinatorial dynamical systems that support a CSP also
have natural homomesic statistics and vice versa, though there appears to be no direct
connection between the two (even in specific cases). 

Second, by taking a Coxeter theoretic approach, we are able to greatly generalize the set of
actions for which our results hold, from the specific action of successively toggling at each vertex
to toggling once per vertex in an arbitrary order (Subsection~\ref{subsec:Coxeter}).

Third, this combinatorial dynamical system displays the usefulness of Striker's notion of generalized toggle
groups~\cite{strikergentog} to settings beyond that of posets. Although there is an
equivariant bijection (Proposition~\ref{correspond}) between the action we study on
independent sets and the action of promotion on zigzag posets, it is much
easier to establish the homomesy in the former setting first, then translate it to the
latter setting.

We now describe the setting and background necessary to understand the problem.

\begin{defn} Let $\calp_n$ denote the \textbf{path graph} with vertex set $[n]:=\{1,2,\dots
,n\}$ and edge set $\{ \{i,i+1 \}: i\in [n-1] \}$.
\end{defn}

\begin{ex}
The path graph with seven vertices is\begin{center}\begin{tikzpicture}[scale=8/9]
\draw[thick, -] (1.15,0) -- (1.85,0);
\draw[thick, -] (2.15,0) -- (2.85,0);
\draw[thick, -] (3.15,0) -- (3.85,0);
\draw[thick, -] (4.15,0) -- (4.85,0);
\draw[thick, -] (5.15,0) -- (5.85,0);
\draw[thick, -] (6.15,0) -- (6.85,0);
\draw[fill] (1,0) circle [radius=0.15];
\draw[fill] (2,0) circle [radius=0.15];
\draw[fill] (3,0) circle [radius=0.15];
\draw[fill] (4,0) circle [radius=0.15];
\draw[fill] (5,0) circle [radius=0.15];
\draw[fill] (6,0) circle [radius=0.15];
\draw[fill] (7,0) circle [radius=0.15];
\node[below] at (1,-0.1) {1};
\node[below] at (2,-0.1) {2};
\node[below] at (3,-0.1) {3};
\node[below] at (4,-0.1) {4};
\node[below] at (5,-0.1) {5};
\node[below] at (6,-0.1) {6};
\node[below] at (7,-0.1) {7};
\node at (7.3,0) {.};
\end{tikzpicture}
\end{center}
\end{ex}

\begin{defn}
An \textbf{independent set} of a graph is a subset of the vertices that does not contain a pair of adjacent vertices.  Let $\cali_n$ denote the set of independent sets of $\calp_n$.
\end{defn}

\begin{ex}
The set of vertices $\{1,4,6\}$ represented\begin{center}\begin{tikzpicture}[scale=8/9]
\draw[thick, -] (1.15,0) -- (1.85,0);
\draw[thick, -] (2.15,0) -- (2.85,0);
\draw[thick, -] (3.15,0) -- (3.85,0);
\draw[thick, -] (4.15,0) -- (4.85,0);
\draw[thick, -] (5.15,0) -- (5.85,0);
\draw[thick, -] (6.15,0) -- (6.85,0);
\draw[fill] (1,0) circle [radius=0.15];
\draw (2,0) circle [radius=0.15];
\draw (3,0) circle [radius=0.15];
\draw[fill] (4,0) circle [radius=0.15];
\draw (5,0) circle [radius=0.15];
\draw[fill] (6,0) circle [radius=0.15];
\draw (7,0) circle [radius=0.15];
\node[below] at (1,-0.1) {1};
\node[below] at (2,-0.1) {2};
\node[below] at (3,-0.1) {3};
\node[below] at (4,-0.1) {4};
\node[below] at (5,-0.1) {5};
\node[below] at (6,-0.1) {6};
\node[below] at (7,-0.1) {7};
\end{tikzpicture}\end{center} is an independent set of $\calp_7$, but $\{1,4,5,6\}$ represented \begin{center}\begin{tikzpicture}[scale=8/9]
\draw[thick, -] (1.15,0) -- (1.85,0);
\draw[thick, -] (2.15,0) -- (2.85,0);
\draw[thick, -] (3.15,0) -- (3.85,0);
\draw[thick, -] (4.15,0) -- (4.85,0);
\draw[thick, -] (5.15,0) -- (5.85,0);
\draw[thick, -] (6.15,0) -- (6.85,0);
\draw[fill] (1,0) circle [radius=0.15];
\draw (2,0) circle [radius=0.15];
\draw (3,0) circle [radius=0.15];
\draw[fill] (4,0) circle [radius=0.15];
\draw[fill] (5,0) circle [radius=0.15];
\draw[fill] (6,0) circle [radius=0.15];
\draw (7,0) circle [radius=0.15];
\node[below] at (1,-0.1) {1};
\node[below] at (2,-0.1) {2};
\node[below] at (3,-0.1) {3};
\node[below] at (4,-0.1) {4};
\node[below] at (5,-0.1) {5};
\node[below] at (6,-0.1) {6};
\node[below] at (7,-0.1) {7};
\end{tikzpicture}\end{center}
is not.  In both of these examples, hollow dots refer to vertices of $\calp_7$ not in the subset.
\end{ex}


Although we sometimes write independent sets as subsets of $[n]:=\{1,2,\dots,n\}$ as above,
it may not be obvious in that notation what the underlying value of $n$ is.  Another notation that is often more convenient for an independent set is its \textbf{binary representation}, in which the bit in position $i$ of $S$ is 0 if $i\not\in S$ and 1 if $i\in S$.  For example 0010010 represents the
independent set $\{3,6\}$ of $\calp_7$.  Thus $\cali_n$ can be viewed as the set of length $n$ binary
strings that do not contain the subsequence 11 (which would indicate the inclusion of
two adjacent vertices). It is well-known and easy to verify that the cardinality of $\cali_{n}$
is a Fibonacci number.

In Section~\ref{sec:togglemain}, we introduce the \emph{toggle group} $\calt_n$ on
$\cali_n$, which is generated by basic involutions called \emph{toggles}.  The notion of
toggle groups goes back
to work of Cameron and Fon-der-Flaass, who introduced toggles on \emph{order ideals} of a
poset~\cite{cameronfonder}.  More recently, Striker
has studied toggle groups in a more general setting~\cite{strikergentog}.
Specifically, given a ``ground" set $X$ and a fixed set of ``allowed" subsets $\call\subseteq 2^X$,
each element $x\in X$ has an associated toggle which removes or inserts $x$ into any
given set in $\call$ provided the resulting set is still in $\call$, and otherwise does
nothing.  In our situation the ground set is $[n]$ and the set of allowed subsets of $[n]$ is $\cali_n$.

Our main results are a proof of a conjecture of Propp (Theorem~\ref{indepsethomomesy}) and a generalization of it (Corollary~\ref{cor:hom-Coxeter}).  These theorems give examples of the \emph{homomesy} (Greek
for ``same middle'') \emph{phenomenon}, introduced by Propp and the second author in
\cite{propproby} and defined as follows.

\begin{defn}
Suppose we have a set $\cals$, an invertible map $w:\cals\ra \cals$ such that every $w$-orbit is finite, and a function (``statistic") $f:\cals\ra\kk$, where $\kk$ is a
field of characteristic 0.  Then we say the triple $(\cals,w,f)$ exhibits \textbf{homomesy}
if there exists a constant $c\in\kk$ such that for every $w$-orbit $\eso\subseteq \cals$, $$\frac{1}{\#\eso}\sum\limits_{x\in\eso}f(x)=c.$$  In this case, we say that the function $f$ is \textbf{homomesic with average} $\mathbf{\textsl{c}}$, or $\textbf{\textsl{c}-mesic}$, under the action of $w$ on $\cals$.
\end{defn}

Some early isolated examples of homomesy exist in the literature, notably a conjecture of
Panyushev \cite[Conjecture 2.1(iii)]{panyushev} for the rowmotion operator acting on
antichains in positive root posets.  This was proven by Armstrong, Stump, and
Thomas~\cite{ast}, but investigation of homomesy as a widespread phenomenon is more recent.  Examples now
include cyclic actions on partitions, Suter's action on Young diagrams, rowmotion and
promotion of order ideals, Lyness 5-cycles (which has strong connections to cluster algebra theory), and certain toggling actions for noncrossing
partitions~\cite{einpropp,efgjmpr,shahrzad,propproby,robydac,strikerwilliams,strikerRS}.

Although it is a new area of research, the homomesy phenomenon has been discovered in a wide
variety of combinatorial dynamical systems.  Most of the initially proven homomesy results were
for systems in which the order of the map was known.  In fact, in many cyclic actions where
homomesy is present, one also finds the cyclic sieving phenomenon of Reiner, Stanton, and
White~\cite{csp}.  (See also Sagan's more leisurely exposition of the basic ideas and
examples~\cite{cspsagan}.)  There have long been conjectured
homomesies for maps with unpredictable orbit sizes, but the first such proven result came
out of a team (including the first author) assembled at an AIM workshop~\cite{efgjmpr}.  In
this paper, we eventually discuss how to determine the sizes of the orbits under our maps, but they do not
divide a number that is easy to describe without listing all of them.


To prove Propp's original conjecture (Theorem~\ref{indepsethomomesy}), we associate an
\emph{orbit board} to each orbit, and partition the ones in the orbit board into \emph{snakes}
which begin in the left column and end in the right column.  Our technique was inspired by
Haddadan's proof that the ``winching'' action on $k$-element subsets of
$[n]$ exhibits homomesy (another conjecture of Propp)~\cite{shahrzad}.
In addition to proving homomesy, the snake representations lead to many other results, e.g.,
on the total number of orbits or the existence of orbits of certain sizes, the latter being
the focus of Section~\ref{sec:orbitsizes}.  We first consider a specific action
$\varphi\in\calt_n$ which toggles left to right, but most of our results can be generalized
to certain other actions in $\calt_n$ named \emph{Coxeter elements}.  This is the focus of
Subsection~\ref{subsec:Coxeter}.  We use some theory of Coxeter groups to explain why we can extend proven results for $\varphi$ to other actions, and this is why we do not start with the more general results.

In Section~\ref{sec:zigzagposets}, we explain how our results can be restated in terms
of toggling order ideals of zigzag posets via an equivariant bijection
$\eta:\cali_n\ra J(\calz_n)$, where $J(\calz_n)$ is the set of order ideals of the zigzag
poset $\calz_n$.  We describe the connections with the well-studied \emph{promotion}
and \emph{rowmotion} operators on the set of order ideals of a poset~\cite{strikerwilliams}.
However, the proofs of our main results are much easier to obtain by working with
toggles on $\cali_n$ as opposed to those
on $J(\calz_n)$, which shows the significance of considering toggle groups in Striker's
generalized setting, instead of the original setting of toggling order ideals from Cameron
and Fon-der-Flaass.  In fact, the independent sets of $\calp_n$ are the antichains of $\calz_n$ in disguise.  In~\cite[\S3.3, 3.6]{strikergentog}, Striker discusses toggles on antichains of posets and on independent sets of graphs.  In~\cite{antichain-toggling}, The first author describes an explicit isomorphism between the toggle groups of antichains and of order ideals for a general poset.

\subsection{Acknowledgements}\label{ss:ack}

The authors are grateful to James Propp for suggesting this problem initially and helping us
understand its broader context.  We have benefitted from useful discussions with David Einstein, Max Glick, Darij Grinberg, Shahrzad Haddadan, Matthew Macauley, Vic Reiner, Elizabeth Sheridan Rossi, Richard Stanley, Jessica Striker, and Nathan Williams.  Computations
leading to the initial conjectures were done in Sage~\cite{sage}. The Online Encylopedia of
Integer Sequences~\cite{oeis} was invaluable for connecting our data with previously known
sequences. We also thank an anonymous referee for comments that helpd us improve the exposition of
this paper.

\section{Toggle Maps on Independent Sets}\label{sec:togglemain}

In this section we state and prove our main homomesy results.  Throughout this paper we assume $n\geq2$.  While some of our results also hold for $n=1$, many do not, and we are not concerned with this trivial case.

\subsection{Definitions and main results}

We now define the toggles on $\cali_n$.

\begin{defn}
For every $i\in[n]$, define $\tau_i:\cali_n\ra\cali_n$, the \textbf{toggle at vertex \textsl{i}}, in the following way.  If $i\in S$, $\tau_i$ removes $i$ from $S$, which still results in an independent set.  If $i\not\in S$, then $\tau_i(S)$ adds $i$ to $S$ assuming the resulting set is still independent, and otherwise does nothing.  Formally, $$\tau_i(S)=\left\{\begin{array}{ll}
S\sm\{i\}&\text{if }i\in S,\\
S\cup\{i\}&\text{if }i\not\in S\text{ and }S\cup\{i\}\in\cali_n,\\
S&\text{if }i\not\in S\text{ and }S\cup\{i\}\not\in\cali_n.\end{array}\right.$$
\end{defn}
Since the $\tau_{i}$ operate to the left of their arguments, we use the standard convention
that a product of toggles is performed from right to left. It is clear that each $\tau_i$ is
an involution, i.e., $\tau_i^2$ is the identity.  We characterize the order of products of
two toggles in the following propositions.

\begin{prop}\label{prop:togglescommute}
The toggles $\tau_i$ and $\tau_j$ commute if and only if $|i-j|\not=1$.
\end{prop}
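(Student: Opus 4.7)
The plan is to prove the two directions separately. For the ``if'' direction, I will show that when $|i-j|\neq 1$ (which includes the trivial case $i=j$), the equality $\tau_i\tau_j(S) = \tau_j\tau_i(S)$ holds for every $S\in\cali_n$. The case $i=j$ is immediate since both sides equal $S$. For the case $|i-j|\geq 2$, the key observation is that whether $\tau_j$ inserts, removes, or fixes $S$ is determined by the membership of $j-1$, $j$, and $j+1$ in $S$. Since $i\notin\{j-1,j,j+1\}$, the toggle $\tau_i$ only alters membership of $i$ and hence cannot change which clause of the definition of $\tau_j$ applies. By symmetry, the same holds with $i$ and $j$ exchanged. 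A short case check on the four possibilities for whether $i$ and $j$ lie in $S$ then confirms that the two compositions yield the same set.

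For the ``only if'' direction, it suffices to produce a single independent set on which the two compositions disagree whenever $|i-j|=1$. Without loss of generality assume $j=i+1$, and take $S=\varnothing$. Then
\[
\tau_j\tau_i(\varnothing) \;=\; \tau_j(\{i\}) \;=\; \{i\},
\]
because $\{i,j\}\notin\cali_n$ forces $\tau_j$ to act as the identity on $\{i\}$; similarly,
\[
\tau_i\tau_j(\varnothing) \;=\; \tau_i(\{j\}) \;=\; \{j\}.
\]
These outputs differ, so $\tau_i$ and $\tau_j$ do not commute.

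No real obstacles arise; the argument is essentially bookkeeping. The only mild subtleties are to recall that ``$|i-j|\neq 1$'' includes the case $i=j$ (where the claim is trivial), and to note that the argument for the ``if'' direction still works when $j=1$ or $j=n$, in which case the set $\{j-1,j,j+1\}$ is replaced by its intersection with $[n]$.
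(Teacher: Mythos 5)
Your proof is correct and follows essentially the same route as the paper: the forward direction rests on the observation that for $|i-j|\geq 2$ toggling $i$ cannot affect whether $j$ may be inserted (and vice versa), and the reverse direction uses the same counterexample $S=\varnothing$, yielding $\{i\}$ versus $\{j\}$. Your version just spells out the bookkeeping slightly more explicitly than the paper does.
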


\begin{proof}
If $i=j$, then $\tau_i$ and $\tau_j$ clearly commute.

Suppose $|i-j|>1$.  Then whether or not $i$ is in an independent set has no effect on whether or not $j$ can be in that set and vice versa.  So $\tau_i\tau_j=\tau_j\tau_i$.

Suppose $|i-j|=1$.  Then $\tau_i(\tau_j(\O))=\{j\}$ and $\tau_j(\tau_i(\O))=\{i\}$, so $\tau_i\tau_j\not=\tau_j\tau_i$.
\end{proof}

\begin{prop}\label{prop:order6}
When $n\geq3$, the order of the map $\tau_i\circ\tau_j$ is
$$\left\{\begin{array}{ll}
1 & \text{if } i=j,\\
2 & \text{if } |i-j| \geq 2,\\
6 & \text{if } |i-j|=1.\\
\end{array}
\right.$$
\end{prop}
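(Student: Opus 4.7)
The plan is to dispatch the three cases in order. When $i=j$, $\tau_i^2=\mathrm{id}$ is immediate from $\tau_i$ being an involution. When $|i-j|\ge 2$, Proposition~\ref{prop:togglescommute} gives $\tau_i\tau_j=\tau_j\tau_i$, so $(\tau_i\tau_j)^2=\tau_i^2\tau_j^2=\mathrm{id}$; equality with $\mathrm{id}$ itself is ruled out by comparing $\tau_i(\varnothing)=\{i\}$ with $\tau_j(\varnothing)=\{j\}$, so the order is exactly $2$.

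The substantive case is $|i-j|=1$, for which I would assume without loss of generality that $j=i+1$ and work locally. Since $\tau_i$ and $\tau_{i+1}$ only ever alter positions $i$ and $i+1$, every other bit of $S$ is preserved by $\tau_i\tau_{i+1}$; in particular, the indicators $b_{i-1}:=\mathbf{1}_{i-1\in S}$ and $b_{i+2}:=\mathbf{1}_{i+2\in S}$ (with positions $0$ and $n+1$ treated as absent from $S$) are orbit invariants. Moreover, whether $\tau_i$ can add or remove $i$, and likewise for $\tau_{i+1}$ at position $i+1$, depends only on $b_{i-1}$ and $b_{i+2}$. Thus the orbit structure of $\tau_i\tau_{i+1}$ reduces to a finite case analysis indexed by $(b_{i-1},b_{i+2})\in\{0,1\}^2$, within which the vertex set outside $\{i,i+1\}$ is frozen.

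I would then verify each case directly on the three allowed local patterns at positions $(i,i+1)$, namely $(0,0)$, $(0,1)$, and $(1,0)$. When $b_{i-1}=b_{i+2}=0$, the map cycles $(0,0)\mapsto(0,1)\mapsto(1,0)\mapsto(0,0)$, a $3$-cycle. When exactly one of $b_{i-1},b_{i+2}$ equals $1$, one of the two local positions is forced empty, the corresponding toggle becomes inert, and the remaining bit is transposed, yielding a $2$-cycle. When both equal $1$, both local positions are forced empty and $S$ is fixed. Hence every orbit of $\tau_i\tau_{i+1}$ has length $1$, $2$, or $3$, so its order divides $6$.

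To conclude, I would exhibit both a $2$-cycle and a $3$-cycle under the hypothesis $n\ge 3$. The empty set realizes the $(b_{i-1},b_{i+2})=(0,0)$ context and hence a $3$-cycle, while either $S=\{i+2\}$ (valid whenever $i+2\le n$) or $S=\{i-1\}$ (valid whenever $i\ge 2$) produces a $2$-cycle; the constraint $i,i+1\in[n]$ together with $n\ge 3$ ensures at least one of these choices is available. Taking the least common multiple of the resulting orbit lengths gives order exactly $6$. The main obstacle is the $3$-cycle verification, where the two toggles interact nontrivially and some care is needed to confirm the cycle direction; the remaining cases collapse cleanly because one of the toggles becomes the identity on the relevant block.
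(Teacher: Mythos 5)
Your proposal is correct and follows essentially the same strategy as the paper: show every orbit of $\tau_i\tau_{i+1}$ has size at most $3$, then exhibit a $3$-cycle through $\varnothing$ and a $2$-cycle through $\{i-1\}$ or $\{i+2\}$ (the paper's witnesses are the same, and it likewise flags that $n\geq 3$ is what guarantees the $2$-cycle exists). Your case analysis on the frozen neighbor bits $(b_{i-1},b_{i+2})$ is a somewhat more detailed route to the ``orbits have size at most $3$'' bound, which the paper gets more quickly by observing that an orbit can only consist of sets among $S$, $S\cup\{i\}$, $S\cup\{i+1\}$, but the substance is the same.
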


\begin{proof}


The proofs of the first two cases are straightforward since toggles are involutions.

Suppose $|i-j|=1$. Since $\tau_i$ and $\tau_j$ are involutions, $(\tau_i\tau_j)^{-1}=\tau_j\tau_i$, so $\tau_i\tau_j$ has the same order as $\tau_j\tau_i$. Thus, we may assume without loss of generality that $i<j$ (so $j=i+1$).  To show that the order of $\tau_i\tau_{i+1}$ is 6, we will show that there is an orbit of size 2, an orbit of size 3, and no orbit with size greater than 3.

Note that the toggles $\tau_i$ and $\tau_{i+1}$ can only affect whether $i$ and/or $i+1$ are in a given set, and no independent set can contain both. Thus, every orbit under the action of $\tau_i\tau_{i+1}$ can at the very most contain $S$, $S\cup\{i\}$, $S\cup\{i+1\}$, for some $S\in\cali_n$. Therefore all orbits have size at most 3.

The orbit $(\O, \{i+1\}, \{i\})$ has size 3. For $i\geq 2$, the orbit $(\{i-1\},\{i-1,i+1\})$ has size 2. If $i=1$, the orbit $(\{3\},\{1,3\})$ has size 2.  (This is why we needed $n\geq3$, as the map $\tau_1\tau_2$ on $\cali_2$ has order 3, not 6.)


\end{proof}

\begin{definition}
Let $\ss_{\cali_n}$ denote the symmetric group on $\cali_n$. The \textbf{toggle group} of $\cali_n$, denoted $\calt_n$, is the subgroup of $\ss_{\cali_n}$ generated by the $\tau_i$ toggles.
\end{definition}

\begin{defn}
A particular element in $\calt_n$ is $\varphi:=\tau_n\cdots\tau_2\tau_1$, the map that toggles at each vertex from left to right.\end{defn}

\begin{example} In $\cali_5$, $\varphi(10010)=01001$ by the following steps:
$$
10010\stackrel{\tau_{1}}{\longmapsto }
00010\stackrel{\tau_{2}}{\longmapsto }
01010\stackrel{\tau_{3}}{\longmapsto }
01010\stackrel{\tau_{4}}{\longmapsto }
01000\stackrel{\tau_{5}}{\longmapsto }
01001.
$$
\end{example}

Note that $\varphi^{-1}=\tau_1\tau_2\cdots\tau_n$, which applies the toggles right to left.

\begin{defn}Given a set $S\in\cali_n$ and $j\in[n]$, define $\chi_j(S)$ to be the indicator function of vertex $j$ in $S$.  That is, $\chi_j(S)$ is the $j^\text{th}$ digit of the binary representation of $S$.\end{defn}

\begin{example}
$\chi_1(10010)=1$, $\chi_2(10010)=0$, $\chi_3(10010)=0$, $\chi_4(10010)=1$, $\chi_5(10010)=0$.
\end{example}

One of our main theorems to be proven later is the following conjecture of Propp.  We
will later extend this result to actions in $\calt_n$ that are ``Coxeter elements'', i.e.,
products of every $\tau_i$ exactly once in some order (Corollary~\ref{cor:hom-Coxeter}).

\begin{thm}[Propp's conjecture]\label{indepsethomomesy} Under the action of $\varphi$ on
$\cali_n$, $\chi_j-\chi_{n+1-j}$ is 0-mesic for every $1\leq j\leq n$.\end{thm}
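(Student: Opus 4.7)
The plan is to present each $\varphi$-orbit $\mathcal{O}=\{S_0,S_1,\ldots,S_{N-1}\}$ (with $S_{t+1}=\varphi(S_t)$) as an \emph{orbit board}---the $N\times n$ array whose $t$th row is the length-$n$ binary representation of $S_t$---and to partition its $1$'s into \emph{snakes}. Summing $\chi_j-\chi_{n+1-j}$ over $\mathcal{O}$ gives the difference between the number of $1$'s in columns $j$ and $n+1-j$ of this board, so the theorem is equivalent to the assertion that every orbit board contains equally many $1$'s in these two columns.

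The first substantive step is to extract the local rule governing how row $t$ determines row $t+1$.  Because $\varphi=\tau_n\cdots\tau_1$ sweeps through the toggles from left to right, a direct computation yields $i\in S_{t+1}$ if and only if $i\notin S_t$, $i-1\notin S_{t+1}$, and $i+1\notin S_t$; equivalently, a $1$ at cell $(t,i)$ forces a $1$ at $(t+1,i+1)$ precisely when $(t,i+2)$ is a $0$ (with positions off the right edge treated as $0$), and the analogous rule for $\varphi^{-1}$ shows that any $1$ at $(t,i)$ with $i\geq 2$ admits either a ``down-right predecessor'' $(t-1,i-1)$ or a ``horizontal predecessor'' $(t,i-2)$ that is itself a $1$.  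With these facts in hand I define a snake to be a maximal sequence of $1$'s in which each step is either a down-right move $(t,i)\to(t+1,i+1)$ (taken when $(t,i+2)=0$) or a horizontal jump $(t,i)\to(t,i+2)$ (taken when $(t,i+2)=1$); every $1$ lies in a unique snake, every snake begins in column $1$, and every snake ends in column $n$.  Writing $\alpha$ for the number of snakes and $d_j(\mathcal{O}):=\#\{t:\{j-1,j+1\}\subseteq S_t\}$ for the count of snake ``skips'' at column $j$, the number of $1$'s in column $j$ equals $\alpha-d_j(\mathcal{O})$, so the theorem reduces to the equality $d_j(\mathcal{O})=d_{n+1-j}(\mathcal{O})$.

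The main obstacle is this symmetric-count identity.  My plan is to invoke the vertex-reversal involution $\sigma:\cali_n\to\cali_n$ given by $\sigma(S)=\{n+1-i:i\in S\}$, which satisfies $\sigma\tau_i\sigma=\tau_{n+1-i}$ and therefore $\sigma\varphi\sigma=\varphi^{-1}$.  This conjugation sends $\varphi$-orbits to $\varphi$-orbits and gives $d_{n+1-j}(\mathcal{O})=d_j(\sigma(\mathcal{O}))$ immediately, so the theorem follows once one shows that every $\varphi$-orbit is preserved setwise by $\sigma$ (rather than being swapped with a different orbit).  I expect this orbit-level symmetry to fall out of the snake structure itself: reflecting the columns of an orbit board and reversing its rows yields the orbit board of $\sigma(\mathcal{O})$, whose snakes are the column-reversed images of those of $\mathcal{O}$; matching each orbit with its reflection---for instance by locating a palindromic (hence $\sigma$-fixed) independent set inside each orbit, or by a direct involution on snakes that pairs a snake skipping column $j$ with one skipping column $n+1-j$---will close the argument.
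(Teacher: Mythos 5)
Your setup is sound and matches the paper's: the orbit board, the local rules for $\varphi$ and $\varphi^{-1}$, the partition of the $1$'s into snakes running from column $1$ to column $n$, and the reduction of the theorem to a column-count identity (your $d_j(\mathcal{O})=d_{n+1-j}(\mathcal{O})$ is equivalent to the paper's comparison of initial and final segments of snake compositions). The gap is in the final step. Your plan is to deduce the identity from the reversal involution $\sigma$ by showing that every $\varphi$-orbit is preserved setwise by $\sigma$ --- e.g.\ by finding a palindromic independent set in each orbit. This is false: for $n\geq 10$ there exist $\varphi$-orbits $\mathcal{O}$ with $\sigma(\mathcal{O})\neq\mathcal{O}$ and containing no symmetrical independent set. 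A concrete counterexample is the orbit of $1010100101$ in $\cali_{10}$, whose snake compositions are the six cyclic rotations of $221121$; the reversed word $121122$ is not among them, so the reflected orbit board is a \emph{different} orbit. Since $\sigma\varphi\sigma=\varphi^{-1}$ only guarantees that $\sigma$ permutes the set of orbits, your identity $d_{n+1-j}(\mathcal{O})=d_j(\sigma(\mathcal{O}))$ does not close the argument for such orbits, and no involution on snakes pairing a skip at column $j$ with a skip at column $n+1-j$ can be built from reversal alone.

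The missing idea is a \emph{cyclic} rather than a reflective symmetry. One must show that when a snake with composition $c$ starts at a given row of column $1$, the next snake to start in column $1$ has composition equal to the left cyclic rotation of $c$ (this requires two further local lemmas tracking how one snake shadows the previous one, shifted down by $2$ or $3$ rows according as $c$ begins with $2$ or $1$). Consequently the multiset of snake compositions of an orbit is closed under cyclic rotation. A $1$ in column $j$ of the board corresponds to a snake composition with an initial segment summing to $j-1$, and a $1$ in column $n+1-j$ to one with a final segment summing to $j-1$; rotating each composition past that initial segment gives a bijection between the two collections, which is exactly the identity you need. Reversal symmetry, where it happens to hold, is sufficient but not necessary; cyclic rotation of snake compositions is what holds in every orbit.
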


\begin{defn}
Given an independent set $S\in\cali_n$ and $w\in\calt_n$, we define the \textbf{orbit board}
for $S$ and $w$ as follows.  Let $S^i=w^i(S)$ for $i\in \zz$ and for any $j\in[n]$, let $S(i,j)=1$ if $j\in
S^i$ and $S(i,j)=0$ if $j\not\in S^i$. (In particular, if $j<1$ or $j>n$, then
$S(i,j)=0$. These are ``out-of-bounds" positions not shown when we display the orbit board.)
\end{defn}

\begin{ex}\label{7ex}
The orbit board for the orbit containing $S=1010100\in\cali_7$ under the action of $\varphi$
is shown in Figure~\ref{fig:7ex}. This is an orbit of size 10, so
$S^{10}=\varphi^{10}(S)=S$. Technically, the orbit board is vertically infinite but
periodic, so we only
show $S^0,S^1,\dots,S^9$ and view it as living on a cylinder.  The element in row $i$ and
column $j$ is $S(i,j)$, with $i\in [0,\ell -1]$ and $j\in [n]$, where $\ell$ is the length
of $S$'s orbit.
Notice that the column-sum vector, $(4,2,3,2,3,2,4)$, is palindromic, illustrating
Theorem~\ref{indepsethomomesy},  since $\chi_j-\chi_{n+1-j}$ has total 0 (and thus average 0)
across this orbit for each $j$.
\end{ex}

\begin{figure}
\begin{center}
\begin{tabular}{c||c|c|c|c|c|c|c}
&\textbf{1}&\textbf{2}&\textbf{3}&\textbf{4}&\textbf{5}&\textbf{6}&\textbf{7}\\\hline\hline\\[-1.111em]
$S^0$&1&0&1&0&1&0&0\\\hline\\[-1.111em]
$S^1$&0&0&0&0&0&1&0\\\hline\\[-1.111em]
$S^2$&1&0&1&0&0&0&1\\\hline\\[-1.111em]
$S^3$&0&0&0&1&0&0&0\\\hline\\[-1.111em]
$S^4$&1&0&0&0&1&0&1\\\hline\\[-1.111em]
$S^5$&0&1&0&0&0&0&0\\\hline\\[-1.111em]
$S^6$&0&0&1&0&1&0&1\\\hline\\[-1.111em]
$S^7$&1&0&0&0&0&0&0\\\hline\\[-1.111em]
$S^8$&0&1&0&1&0&1&0\\\hline\\[-1.111em]
$S^9$&0&0&0&0&0&0&1\\\hline\hline\\[-1.111em]
\textbf{Total} & \textbf{4} & \textbf{2} & \textbf{3} & \textbf{2} & \textbf{3} & \textbf{2} & \textbf{4}
\end{tabular}
\end{center}
\caption{The orbit under the action $\varphi$ on $\cali_7$ containing $S=1010100$.}
\label{fig:7ex}
\end{figure}

A homomesy result which is much simpler to prove is the following.

\begin{thm}\label{babyhomomesy}For $n\geq2$, under the action of $\varphi$ on $\cali_n$, the statistics $2\chi_1+\chi_2$ and $\chi_{n-1}+2\chi_n$ are both 1-mesic.\end{thm}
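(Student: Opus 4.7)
The plan is to reduce both statements to a purely local analysis of how the pair $(\chi_1,\chi_2)$ evolves under $\varphi$, and then use a reflection symmetry to move from one end of the path to the other.

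First, I would observe that since $\tau_k$ for $k\geq 2$ does not affect vertex $1$, we have $\chi_1(\varphi(S))=\chi_1(\tau_1(S))$; unpacking the definition of $\tau_1$ gives $\chi_1(\varphi(S))=1$ iff $\chi_1(S)=\chi_2(S)=0$. Similarly $\chi_2(\varphi(S))=\chi_2(\tau_2\tau_1(S))$, and since $\tau_1$ only changes vertex $1$ (and only under the condition just noted), a short case analysis yields $\chi_2(\varphi(S))=1$ iff $\chi_1(S)=1$ and $\chi_2(S)=\chi_3(S)=0$. Because $\cali_n$ consists of $11$-free strings, $(\chi_1(S),\chi_2(S))$ takes only the three values $(0,0)$, $(0,1)$, $(1,0)$, and the two formulas above determine the one-step transitions: $(0,0)\mapsto(1,0)$; $(0,1)\mapsto(0,0)$; and $(1,0)\mapsto(0,1)$ or $(0,0)$ according as $\chi_3(S)$ is $0$ or $1$.

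Next I would count. Fix a $\varphi$-orbit $\eso$ of size $\ell$ and let $a$, $a'$, $b$ be the number of $S\in\eso$ with $(\chi_1,\chi_2)(S)$ equal to $(0,0)$, $(1,0)$, $(0,1)$ respectively. The transition table shows that each $(1,0)$ is immediately preceded by a $(0,0)$ and each $(0,0)$ is immediately followed by a $(1,0)$; tracing these arrows cyclically around the orbit forces $a=a'$. Therefore $\ell=2a+b$, while $f:=2\chi_1+\chi_2$ takes values $0$, $2$, $1$ on the three states, so
$$\sum_{S\in\eso} f(S) \;=\; 0\cdot a + 2\cdot a + 1\cdot b \;=\; 2a+b \;=\; \ell,$$
which is the desired $1$-mesy for $2\chi_1+\chi_2$.

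For the second statistic I would invoke the reflection $\rho:\cali_n\to\cali_n$, $S\mapsto\{n{+}1{-}i:i\in S\}$. This satisfies $\rho\tau_i\rho^{-1}=\tau_{n+1-i}$, hence $\rho\varphi\rho^{-1}=\varphi^{-1}$, so $\rho$ sends $\varphi$-orbits to $\varphi$-orbits (since $\varphi$ and $\varphi^{-1}$ have identical orbits as sets). Since $\chi_j\circ\rho=\chi_{n+1-j}$, the average of $2\chi_n+\chi_{n-1}$ over any $\varphi$-orbit equals the average of $2\chi_1+\chi_2$ over its $\rho$-image, which is $1$ by the first part.

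The only step with any real content is the derivation of the two transition formulas for $\chi_1$ and $\chi_2$; once those are in hand the homomesy is just the bookkeeping identity $2a+b=\ell$, and the second statistic is immediate from the left–right symmetry of the path graph.
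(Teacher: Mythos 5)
Your proof is correct, and its core --- the transition table for the pair $(\chi_1,\chi_2)$ under $\varphi$ and the resulting count --- is essentially the paper's own argument. The paper phrases the bookkeeping as a partition of the orbit's sequence of first-two-bit states into cyclic patterns $10\to01\to00$ and $10\to00$, each with average $1$; this is equivalent to your observation that $a=a'$ and hence $\sum f=2a+b=\ell$. The one place you genuinely diverge is the second statistic: the paper simply declares the right-end analysis ``analogous'' (which it is, though slightly more delicate, since whether $\tau_{n-1}$ inserts depends on the state of vertex $n-2$ \emph{after} the earlier toggles have acted), whereas you deduce it from the reversal symmetry $\rho\varphi\rho^{-1}=\varphi^{-1}$ together with $\chi_j\circ\rho=\chi_{n+1-j}$. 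That is a clean, valid shortcut that reuses the paper's reversal proposition and spares you a second local analysis; it buys a little rigor at no cost, while the paper's route keeps both halves self-contained and symmetric in presentation.
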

The reader can easily check that this holds for the orbit in Figure~\ref{fig:7ex}.  This
result can be obtained as a corollary of \cite[Theorem 7.5]{efgjmpr}, but we include another proof here.
As with Theorem~\ref{indepsethomomesy}, we will generalize this theorem to actions by
general Coxeter elements of toggles in Subsection \ref{subsec:Coxeter}.

\begin{proof}We prove that $2\chi_1+\chi_2$ is 1-mesic, as the proof for $\chi_{n-1}+2\chi_n$ is analogous.

The first two bits of any independent set $S$ are either 10, 01, or 00.

If $S$ begins with 10, then when applying $\varphi$ to $S$, the first toggle $\tau_1$
removes the first vertex so the first digit is 0.  Then $\tau_2$ can sometimes insert the
second vertex and sometimes cannot, depending on whether $3\in S$. Thus, $\varphi(S)$ begins
either with 01 or 00.

If $S$ begins with 01, then when applying $\varphi$ to $S$, we leave the first vertex out
and then remove the second vertex.  So $\varphi(S)$ begins with 00.

If $S$ begins with 00, then when applying $\varphi$ to $S$, we insert the first vertex and
then leave the second vertex out. So $\varphi(S)$ begins with 10.

Thus, when repeatedly applying $\varphi$, the first two digits are partitioned into cyclic
patterns of $10 \ra 01 \ra 00$ or $10 \ra 00$.  (An orbit may contain both types of
patterns.) As $2\chi_1+\chi_2$ has average 1 across both types of patterns, it will across
every orbit as well.
\end{proof}

\subsection{Proof of Propp's original conjecture}
Our next goal is to prove Theorem~\ref{indepsethomomesy} via a partitioning of the orbit
board into ``snakes''.  We first note what happens in the special case where symmetry of
independent sets under reversal makes the result obvious.

\begin{defn}The \textbf{reverse} of a word is that word written in the reverse order.  For example, the reverse of 101000010 is 010000101.  Denote the reverse of an independent set $S$ as $S^{\rev}$.  Writing $S$ as a set, $$S^{\rev}=\{n+1-i|i\in S\}.$$\end{defn}

The following is clear because the inverse of $\varphi$ is the function that composes toggling in the reverse order.

\begin{prop}\label{reversal} For any $S$, $(S^{\rev})^{\rev}=S$ and $\varphi(S^{\rev})=\left(\varphi^{-1}(S)\right)^{\rev}$.\end{prop}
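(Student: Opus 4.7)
The first claim, $(S^{\rev})^{\rev}=S$, is immediate from the definition: applying the map $i\mapsto n+1-i$ twice recovers the identity on $[n]$, so reversal is an involution on $\cali_n$.

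For the second claim, the plan is to work at the level of individual toggles rather than with $\varphi$ directly. Let $R:\cali_n\to\cali_n$ denote the reversal map, $R(S)=S^{\rev}$. The key observation is that reversal intertwines $\tau_i$ with $\tau_{n+1-i}$; concretely, I would verify
\[
R\circ\tau_i\circ R=\tau_{n+1-i}
\]
by checking that $j\in S$ if and only if $n+1-j\in S^{\rev}$, and noting that the adjacency condition $\{j,j\pm1\}\not\subseteq S$ translates under reversal into the adjacency condition at the vertex $n+1-j$, so insertion/removal/blocking behavior matches case by case.

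Using this, conjugating $\varphi=\tau_n\tau_{n-1}\cdots\tau_1$ by $R$ gives
\[
R\circ\varphi\circ R=(R\tau_n R)(R\tau_{n-1}R)\cdots(R\tau_1 R)=\tau_1\tau_2\cdots\tau_n=\varphi^{-1},
\]
where the last equality is the identity noted just after the definition of $\varphi$. Applying both sides to $S$ and using $R^2=\mathrm{id}$ yields $\varphi(S^{\rev})=R(\varphi^{-1}(S))=\bigl(\varphi^{-1}(S)\bigr)^{\rev}$, as desired.

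The only substantive step is the conjugation identity $R\tau_i R=\tau_{n+1-i}$; everything else is formal manipulation. This should not be a real obstacle since the toggles are defined purely in terms of the local combinatorics at a single vertex, and reversal is a graph automorphism of $\calp_n$ sending vertex $i$ to vertex $n+1-i$, so toggle behavior is transported correctly.
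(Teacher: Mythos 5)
Your proposal is correct and is essentially the paper's argument made explicit: the paper simply declares the proposition clear ``because the inverse of $\varphi$ is the function that composes toggling in the reverse order,'' and your conjugation identity $R\tau_i R=\tau_{n+1-i}$ together with $R\varphi R=\tau_1\tau_2\cdots\tau_n=\varphi^{-1}$ is precisely the justification behind that remark.
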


\begin{defn} A \textbf{symmetrical} independent set is one that is its own reverse.  For example, 010010 is symmetrical.
\end{defn}


\begin{defn}
A $\varphi$-orbit $\eso$ is \textbf{reversible} if for some $S\in\eso$, $S^{\rev}$ is also in $\eso$.
\end{defn}

For a reversible orbit $\eso$, such as in Example~\ref{7ex}, there exists one $S\in\eso$ whose reverse is also in $\eso$.  Then by Proposition~\ref{reversal}, every set in $\eso$ has its reverse in $\eso$.  So it is clear that $\chi_j-\chi_{n+1-j}$
has average zero across any reversible orbit.  For $n\geq10$, however, there are $\varphi$-orbits on $\cali_n$
that are not reversible, so it is surprising a priori that Theorem~\ref{indepsethomomesy}
holds in general.

\begin{prop}\label{prop:atmost2symm}
Any $\varphi$-orbit containing a symmetrical independent set is reversible.  An orbit contains at most
two symmetrical independent sets (but even a reversible orbit may not contain any).
\end{prop}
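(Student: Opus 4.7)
The plan is to deduce both assertions directly from Proposition~\ref{reversal}, which yields, by straightforward induction on $k$, the iterated identity
\[
\varphi^k(S^{\rev}) = \bigl(\varphi^{-k}(S)\bigr)^{\rev} \qquad \text{for every } k \in \zz.
\]
The first assertion is essentially immediate: if $S \in \eso$ is symmetrical then $S^{\rev} = S \in \eso$, so $\eso$ is reversible with $S$ itself as witness.

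For the second assertion, the plan is to exploit the above identity specialized to a symmetrical element. Let $S \in \eso$ be symmetrical and let $\ell$ denote the length of $\eso$. Since $S^{\rev} = S$, the identity collapses to $\bigl(\varphi^k(S)\bigr)^{\rev} = \varphi^{-k}(S)$ for all $k$. I would then suppose $T = \varphi^k(S)$ is a second symmetrical element of $\eso$; symmetry of $T$ forces $T = T^{\rev} = \varphi^{-k}(S)$, hence $\varphi^{2k}(S) = S$, so $\ell \mid 2k$. Working modulo $\ell$, this leaves at most two residues for $k$: the value $k \equiv 0$, recovering $S$, and (only when $\ell$ is even) the value $k \equiv \ell/2$. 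This caps the number of symmetrical sets in $\eso$ at two.

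No step here is a genuine obstacle; the only care required is the verification of the iterated identity for all $k \in \zz$ (including negative indices), which is a direct induction using that $\varphi$ is invertible and that reversal is an involution. The parenthetical remark that a reversible orbit need not contain any symmetrical independent set requires no proof at this point; one can simply exhibit a small example later (for instance from the orbit data in Section~\ref{sec:orbitsizes}).
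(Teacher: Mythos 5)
Your proof is correct and follows essentially the same route as the paper's: both rest on Proposition~\ref{reversal} together with the observation that a symmetrical element at position $k$ in the orbit of a symmetrical $S$ forces $\varphi^{2k}(S)=S$. Your formulation via the congruence $2k\equiv 0\pmod{\ell}$, which has at most two solutions modulo $\ell$, is a mild streamlining of the paper's bookkeeping, which instead fixes two symmetrical sets, deduces that the orbit has size exactly $2m$, and then verifies that every other element fails to be symmetrical.
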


\begin{proof}
For an orbit $\eso$ containing a symmetrical independent set $S$, $S^{\rev}=S$ is in the orbit, so $\eso$ is reversible.

Now assume that an orbit $\eso$ contains at least two different symmetrical independent sets $S$ and $T$.  Then there exists $m\geq1$ such that $\varphi^m(S)=T$ and let $m$ be the least number that satisfies this.  Then from Proposition~\ref{reversal}, we have that $$\varphi^{-m}(S)=\varphi^{-m}\left(S^{\rev}\right)=\left(\varphi^m(S)\right)^{\rev}=T^{\rev}=T.$$  Thus $S=\varphi^m(T)$, which implies \hbox{$\varphi^{2m}(S)=S$}.  Therefore, $\eso$ has $2m$ sets, since $m$ was chosen to be minimal.  Let $U\not=S,T$ be another set in $\eso$.  Then $U=\varphi^k(S)$ for some $k\in[2m-1]$ with $k\not=m$, and so $$\varphi^{-k}(S)=\varphi^{-k}\left(S^{\rev}\right)=\left(\varphi^k(S)\right)^{\rev}=U^{\rev}.$$  Since $\eso$ has $2m$ sets, we cannot have $\varphi^{-k}(S)=\varphi^k(S)$, so $U\not=U^{\rev}$.  Thus, by definition $U$ is not symmetrical, so $S$ and $T$ are the only symmetrical sets in $\eso$.
\end{proof}



\begin{lemma}\label{lem:21}Consider the action of $\varphi$.
\begin{enumerate}
\item When $S(i,j)=1$ and $j \not= n$, either $S(i,j+2)=1$ or $S(i+1,j+1)=1$, and never both.
\item When $S(i,j)=1$ and $j \not= 1$, either $S(i,j-2)=1$ or $S(i-1,j-1)=1$, and never both.
\item If $S(i,j)=1$, then $S(i,j-1)=S(i,j+1)=S(i-1,j)=S(i+1,j)=0$.
\item If $S(i,j)=1$, then $S(i+1,j-1)=S(i-1,j+1)=0$.
\end{enumerate}
\end{lemma}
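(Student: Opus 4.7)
The plan is to unpack $\varphi=\tau_n\cdots\tau_2\tau_1$ and track intermediate states: for any $U\in\cali_n$, I would write $T_k(U)=\tau_k\cdots\tau_1(U)$, so $T_0(U)=U$ and $T_n(U)=\varphi(U)$. Two facts drive every part: (a) each $T_k(U)$ lies in $\cali_n$, and (b) $\tau_m$ only alters vertex $m$, so the status of a vertex $v$ with $v\notin\{1,\ldots,k\}$ in $T_k(U)$ agrees with its status in $U$.

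Part (3) is essentially immediate. The equalities $S(i,j\pm 1)=0$ are just the independence of $S^i$, while $S(i+1,j)=0$ holds because $j\in T_{j-1}(S^i)$ forces $\tau_j$ to remove $j$ and no later toggle reinstates it; then $S(i-1,j)=0$ is the contrapositive applied to $S^{i-1}$. For (4), if $j\in S^i$ then $j\in T_{j-2}(S^i)$, so by independence $j-1\notin T_{j-2}(S^i)$, and the toggle $\tau_{j-1}$ cannot insert $j-1$ because its neighbor $j$ is still present; later toggles leave vertex $j-1$ untouched, giving $S(i+1,j-1)=0$. The statement $S(i-1,j+1)=0$ is the contrapositive of the first half of (4) applied to vertex $j+1$ at time $i-1$.

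For (1), I would focus on the single step $\tau_{j+1}$ when computing $\varphi(S^i)=S^{i+1}$. Just after $\tau_j$, vertex $j$ has been removed, and vertex $j+1$ is absent from $T_j(S^i)$ (it was absent from $S^i$ by independence, and $\tau_1,\ldots,\tau_j$ do not touch vertex $j+1$). The status of vertex $j+2$ in $T_j(S^i)$ matches its status in $S^i$, so $\tau_{j+1}$ successfully inserts $j+1$ exactly when $j+2\notin S^i$; subsequent toggles leave vertex $j+1$ alone, giving the exclusive dichotomy $S(i+1,j+1)=1$ precisely when $S(i,j+2)=0$.

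Part (2) is the main obstacle, because it requires reasoning backward in time; the fix is to instead unpack $\varphi(S^{i-1})=S^i$. Since $j\in S^i$, one deduces that $j\notin S^{i-1}$ (else $\tau_j$ would have removed $j$) and that $\tau_j$ must have successfully inserted $j$, which forces $j-1\notin T_{j-1}(S^{i-1})$. I would then split on how this happens at step $\tau_{j-1}$: either $j-1\in T_{j-2}(S^{i-1})$ and $\tau_{j-1}$ removed it (so by independence $j-2\notin T_{j-2}(S^{i-1})$, yielding $S(i-1,j-1)=1$ and $S(i,j-2)=0$), or $j-1\notin T_{j-2}(S^{i-1})$ and $\tau_{j-1}$ failed to add it (forcing $j-2\in T_{j-2}(S^{i-1})$, since we already know $j\notin T_{j-2}(S^{i-1})$, yielding $S(i-1,j-1)=0$ and $S(i,j-2)=1$). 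The bookkeeping that $\tau_1,\ldots,\tau_{j-3}$ and $\tau_{j-1},\ldots,\tau_n$ all leave vertex $j-2$ alone is what cleanly pulls the $T_{j-2}$-membership conditions back to $S^{i-1}$ and pushes them forward to $S^i$, and managing this precisely is the technical heart of the argument.
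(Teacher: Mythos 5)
Your proof is correct, and for parts (1), (3), and (4) it follows essentially the same route as the paper: unpack $\varphi=\tau_n\cdots\tau_2\tau_1$, note that $\tau_m$ only touches vertex $m$, and read off which neighbors are present at the moment each toggle fires. The one genuine divergence is part (2). The paper dispatches it in one line by observing that $\varphi^{-1}=\tau_1\tau_2\cdots\tau_n$ applies the toggles in reverse order, so the left-right mirror of the argument for (1) applies (this is the same reversal symmetry recorded in Proposition~\ref{reversal}); you instead run the computation forward through $\varphi(S^{i-1})=S^i$, deducing that $\tau_j$ must have inserted $j$ and then splitting on whether $\tau_{j-1}$ removed $j-1$ or failed to insert it. Your version is longer but entirely self-contained and makes the exclusivity of the dichotomy explicit in each branch, whereas the paper's appeal to symmetry is slicker but leaves the reader to verify that the mirrored argument really does transport correctly. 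Both are valid; the only small thing worth flagging in yours is the boundary case $j=2$ in part (2), where the case $j-2\in T_{j-2}(S^{i-1})$ is vacuously impossible and one lands automatically in the branch $S(i-1,j-1)=1$ — consistent with the convention $S(i,0)=0$, but worth a sentence.
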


\begin{proof}
(3) is clear because each $S^i$ is an independent set, and if $S(i,j)=1$, then $j\in S^i$, so $j\not\in \varphi(S^i)=S^{i+1}$.

Now we prove (1). If $j\in S^i$, then $j\not\in S^{i+1}$, so $j+1\in S^{i+1}$ if and only if $j+2\not\in S^i$.

The proof of (2) is analogous to that of (1) because $\varphi^{-1}$ applies the toggles in the reverse order.

To prove (4), assume $j\geq 2$ with $S(i,j)=1$.  Then $j\in S_i$.  Then after applying $\tau_{j-1}\cdots \tau_1$ to $S_i$, we cannot have both $j-1$ and $j$ in the same independent set.  So $S(i+1,j-1)=0$.  Thus if $S(i-1,j+1)=1$, then $S(i,j)=0$ which is a contradiction, so $S(i-1,j+1)=0$.
\end{proof}

From Lemma~\ref{lem:21}(1), given a 1 in the orbit board (outside of the rightmost column), there is another 1 either in the position two spaces to the right, or the position one space diagonally right and down.  From Lemma~\ref{lem:21}(2), for any 1 in the orbit board (outside of the leftmost column), there is another 1 either in the position two spaces to the left, or the position one space diagonally left and up.  Therefore, the ones in the orbit board can be partitioned into sequences, called \textbf{snakes}, that begin in the left column and end in the right column. For any 1 in the snake, the next 1 is located either two spaces to the right of it, or in the position one space diagonally right and down.

\begin{ex}\label{7snake}
The orbit board from Figure~\ref{fig:7ex}, with colors representing the different snakes, is shown in Figure~\ref{fig:7snakes}.
\begin{figure}
\begin{center}\begin{tabular}{c||c|c|c|c|c|c|c}
&\textbf{1}&\textbf{2}&\textbf{3}&\textbf{4}&\textbf{5}&\textbf{6}&\textbf{7}\\\hline\hline\\[-1.111em]
$S^0$&{\cellcolor{red}{\color{white}\textbf{1}}}&\textbf{0}&{\cellcolor{red}{\color{white}\textbf{1}}}&\textbf{0}&{\cellcolor{red}{\color{white}\textbf{1}}}&\textbf{0}&\textbf{0}\\\hline\\[-1.111em]
$S^1$&\textbf{0}&\textbf{0}&\textbf{0}&\textbf{0}&\textbf{0}&{\cellcolor{red}{\color{white}\textbf{1}}}&\textbf{0}\\\hline\\[-1.111em]
$S^2$&{\cellcolor{purple}{\color{white}\textbf{1}}}&\textbf{0}&{\cellcolor{purple}{\color{white}\textbf{1}}}&\textbf{0}&\textbf{0}&\textbf{0}&{\cellcolor{red}{\color{white}\textbf{1}}}\\\hline\\[-1.111em]
$S^3$&\textbf{0}&\textbf{0}&\textbf{0}&{\cellcolor{purple}{\color{white}\textbf{1}}}&\textbf{0}&\textbf{0}&\textbf{0}\\\hline\\[-1.111em]
$S^4$&{\cellcolor{green}{\color{white}\textbf{1}}}&\textbf{0}&\textbf{0}&\textbf{0}&{\cellcolor{purple}{\color{white}\textbf{1}}}&\textbf{0}&{\cellcolor{purple}{\color{white}\textbf{1}}}\\\hline\\[-1.111em]
$S^5$&\textbf{0}&{\cellcolor{green}{\color{white}\textbf{1}}}&\textbf{0}&\textbf{0}&\textbf{0}&\textbf{0}&\textbf{0}\\\hline\\[-1.111em]
$S^6$&\textbf{0}&\textbf{0}&{\cellcolor{green}{\color{white}\textbf{1}}}&\textbf{0}&{\cellcolor{green}{\color{white}\textbf{1}}}&\textbf{0}&{\cellcolor{green}{\color{white}\textbf{1}}}\\\hline\\[-1.111em]
$S^7$&{\cellcolor{blue}{\color{white}\textbf{1}}}&\textbf{0}&\textbf{0}&\textbf{0}&\textbf{0}&\textbf{0}&\textbf{0}\\\hline\\[-1.111em]
$S^8$&\textbf{0}&{\cellcolor{blue}{\color{white}\textbf{1}}}&\textbf{0}&{\cellcolor{blue}{\color{white}\textbf{1}}}&\textbf{0}&{\cellcolor{blue}{\color{white}\textbf{1}}}&\textbf{0}\\\hline\\[-1.111em]
$S^9$&\textbf{0}&\textbf{0}&\textbf{0}&\textbf{0}&\textbf{0}&\textbf{0}&{\cellcolor{blue}{\color{white}\textbf{1}}}\\\hline\hline\\[-1.111em]
\textbf{Total} & \textbf{4} & \textbf{2} & \textbf{3} & \textbf{2} & \textbf{3} & \textbf{2} & \textbf{4}
\end{tabular}\end{center}
\caption{The orbit from Figure~\ref{fig:7ex} with colors representing the snakes}
\label{fig:7snakes}
\end{figure}
\end{ex}

Therefore, to know where the ones in the orbit board are, it suffices to analyze the snakes. To each $\varphi$-orbit on $\cali_n$, we will associate an equivalence class of compositions of $n-1$ into parts 1 and 2, with each composition representing the snakes.

\begin{defn}
A \textbf{composition} of $n\in\zz^+$ is a sequence of positive integers whose sum is
$n$. Two compositions of $n$ are said to be \textbf{cyclically equivalent} if one is a
cyclic rotation of the other.  Otherwise, the compositions are \textbf{cyclically
inequivalent}.
\end{defn}

\begin{ex}
21121, 11212, 12121, 21211, and 12112 are cyclically equivalent compositions of 7.
\end{ex}

To associate a composition of $n-1$ to any given snake in a $\varphi$-orbit of $\cali_n$, a
step of two positions to the right corresponds to a 2, and a step of one position diagonally
right and down corresponds to a 1.  Thus, we get a composition of $n-1$ because we start in the
leftmost column and end in the rightmost column.

\begin{defn}
To associate a composition of $n-1$ to any given snake in a $\varphi$-orbit of $\cali_n$, a
step of two positions to the right corresponds to a 2, and a step of one position diagonally
right and down corresponds to a 1.  Thus, we get a composition of $n-1$ because we start in the
leftmost column and end in the rightmost column.  This is called the \textbf{snake composition} of the snake.
\end{defn}

In Example~\ref{7snake}, the red snake has snake composition 2211, the purple snake has snake composition 2112, the green snake has composition 1122, and the blue snake has composition 1221.

The following lemmata further constrain the possible pattern of ones in an orbit board.  They will be used in the proof of Theorem~\ref{thm:snakeorbit}.

\begin{lemma}\label{lem:1}
Under the action of $\varphi$, suppose $S(i,j)=1$ and $S(i+2,j-1)=1$.\begin{enumerate}
\item If $S(i,j+2)=1$, then $S(i+2,j+1)=1$.
\item If $S(i+1,j+1)=1$, then $S(i+3,j)=1$.
\item If $j=n$, then $S(i+3,j)=S(i+3,n)=1$.
\end{enumerate}
\end{lemma}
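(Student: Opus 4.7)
The plan is to deduce all three parts from a single biconditional characterization of when a cell in the orbit board equals $1$, combined with the forcing rules already packaged in Lemma~\ref{lem:21}. Specifically, for $k\in[n]$ I would first establish
\[ S(i+1,k)=1 \iff S(i,k) = S(i,k+1) = S(i+1,k-1) = 0, \]
with the convention that out-of-bounds positions are $0$. The reasoning mirrors the paper's own proof of Lemma~\ref{lem:21}(1): as $\varphi = \tau_n\cdots\tau_1$ is applied to $S^i$, when $\tau_k$ fires the entries at positions $k$ and $k+1$ still agree with $S^i$ (the toggles at those indices come later or not at all), while position $k-1$ has already reached its final value in $S^{i+1}$; the toggle $\tau_k$ inserts $k$ precisely when those three neighboring values are all $0$.

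With this biconditional in hand, each of the three parts reduces to checking that the three cells governing the target are $0$, which in every case is handed to us by parts (3) and (4) of Lemma~\ref{lem:21}. For part (1), to verify $S(i+2,j+1)=1$, I would apply Lemma~\ref{lem:21}(4) to $S(i,j+2)=1$ to obtain $S(i+1,j+1)=0$, and apply Lemma~\ref{lem:21}(3) to $S(i,j+2)=1$ and to $S(i+2,j-1)=1$ to obtain $S(i+1,j+2)=0$ and $S(i+2,j)=0$ respectively.

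For part (2), to verify $S(i+3,j)=1$, I would apply Lemma~\ref{lem:21}(3) to $S(i+2,j-1)=1$ (killing both $S(i+2,j)$ and $S(i+3,j-1)$) and apply Lemma~\ref{lem:21}(3) to $S(i+1,j+1)=1$ (killing $S(i+2,j+1)$). For part (3), where $j=n$, the required value $S(i+2,n+1)=0$ is free by the out-of-bounds convention, while $S(i+2,n)=0$ and $S(i+3,n-1)=0$ both come from Lemma~\ref{lem:21}(3) applied to $S(i+2,n-1)=1$, so the biconditional again delivers $S(i+3,n)=1$. The only genuinely delicate point in the argument is establishing the biconditional cleanly, since it requires correctly tracking which columns have or have not been touched by the time $\tau_k$ is applied; once that is in place, the three parts are pure bookkeeping.
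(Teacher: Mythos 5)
Your proof is correct and follows essentially the same route as the paper: the paper's argument for part (1) is precisely an instance of your biconditional (it checks that positions $j$, $j+1$, $j+2$ are all vacant at the moment $\tau_{j+1}$ fires when passing from $S^{i+1}$ to $S^{i+2}$), while parts (2) and (3) are handled there by citing Lemma~\ref{lem:21}(1) directly, which encodes the same local toggle analysis. Your packaging of the insertion criterion as a single reusable biconditional is a clean systematization, and every one of your applications of Lemma~\ref{lem:21}(3) and (4) checks out.
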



\begin{proof}
Without loss of generality we may assume $i=0$, as we can start our orbit board anywhere.  This means that $j\in S^0$ and $j-1\in S^2$.\begin{enumerate}

\item In this scenario, $j+2\in S^0$, and we wish to conclude that $j+1\in S^2$.  By Lemma~\ref{lem:21}(1), $S(0,j)=S(0,j+2)=1$ gives $S(1,j+1)=0$.  Thus $j+1\not\in S^1$.  Also, $j+2\in S^0$ implies $j+2\not\in S^1$. And $j-1\in S^2$ implies $j\not\in S^2$.  Therefore, when applying toggles to $S^1$, $j+1$ gets toggled in, so $j+1\in S^2$.

\item In this scenario, $j+1\in S^1$, and we wish to conclude that $j\in S^3$.  Since $j-1\in S^2$, we can use Lemma~\ref{lem:21}(1) to determine that either $j+1\in S^2$ or $j\in S^3$.  However, $j+1\not\in S^2$ because $j+1\in S^1$.  Therefore, $j\in S^3$.

\item Since $S(2,n-1)=1$, either $S(2,n+1)=1$ or $S(3,n)=1$ from Lemma~\ref{lem:21}(1).  Only the second scenario is possible so $S(3,n)=1$.
\end{enumerate}
\end{proof}

\begin{lemma}\label{lem:2}
Under the action of $\varphi$, suppose $S(i,j)=1$ and $S(i+2,j-2)=1$.\begin{enumerate}
\item If $S(i,j+2)=1$, then $S(i+2,j)=1$.
\item If $S(i+1,j+1)=1$, then $S(i+3,j-1)=1$.
\item If $j=n$, then $S(i+2,j)=S(i+2,n)=1$.
\end{enumerate}
\end{lemma}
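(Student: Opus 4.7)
The plan is to follow the template of the proof of Lemma~\ref{lem:1}, assuming without loss of generality that $i = 0$ so the hypotheses become $j \in S^0$ and $j-2 \in S^2$. The key bookkeeping principle throughout is that when toggle $\tau_k$ is about to be applied in the computation $S^{m+1} = \tau_n \cdots \tau_1(S^m)$, positions strictly less than $k$ have already been updated to their $S^{m+1}$ values, while positions at least $k$ still carry their $S^m$ values. All three parts will follow by combining this principle with Lemma~\ref{lem:21}.

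For part (1), I would first apply Lemma~\ref{lem:21}(1) to the pair $S(0,j) = S(0,j+2) = 1$ to force $S(1,j+1) = 0$. The elementary facts $j, j+2 \notin S^1$ (from $j, j+2 \in S^0$) and $j-1 \notin S^2$ (from the hypothesis $j-2 \in S^2$ together with independence of $S^2$) then show that at the moment $\tau_j$ acts in producing $S^2 = \varphi(S^1)$, none of $j-1, j, j+1$ are in the current state, so $\tau_j$ inserts $j$ and $S(2,j) = 1$. Part (2) is then a short deduction: Lemma~\ref{lem:21}(1) applied to $S(2, j-2) = 1$ forces exactly one of $S(2, j) = 1$ and $S(3, j-1) = 1$, and the hypothesis $j+1 \in S^1$ means $j+1$ is still present in the state when $\tau_j$ acts to form $S^2$, blocking the insertion of $j$; hence $S(2, j) = 0$, forcing $S(3, j-1) = 1$.

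The boundary case (3) I would handle directly, since the analogue of Lemma~\ref{lem:21}(1) at $j = n$ is not available: $n \in S^0$ forces $n \notin S^1$, and $n-2 \in S^2$ forces $n-1 \notin S^2$; since $\tau_n$ is the final toggle applied in forming $S^2$, at that moment the state contains neither $n$ nor $n-1$, so $\tau_n$ inserts $n$ and $S(2, n) = 1$. I do not anticipate any serious obstacle; the proof is essentially bookkeeping of the order in which individual toggles act, and the only care required is consistently tracking which row of the orbit board each position reflects at each intermediate stage of $\tau_n \cdots \tau_1$.
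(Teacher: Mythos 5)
Your proposal is correct and follows essentially the same route as the paper's proof: normalize to $i=0$, use Lemma~\ref{lem:21}(1) plus the left-to-right toggle order to force the insertion of $j$ in part (1), rule out $S(2,j)=1$ to force the diagonal step in part (2), and handle the boundary column directly in part (3). The only (harmless) deviations are that in part (2) you argue $j\not\in S^2$ directly from the toggle order where the paper invokes Lemma~\ref{lem:21}(4), and you cite Lemma~\ref{lem:21}(1) for the final dichotomy where the paper cites part (2) of that lemma.
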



\begin{proof}
As in the proof of the previous lemma, assume $i=0$ without loss of generality.  This means that $j\in S^0$ and $j-2\in S^2$.
\begin{enumerate}
\item In this scenario, $j+2\in S^0$, and we wish to conclude that $j\in S^2$.  Since
$S(0,j) =S(0,j+2)=1$, we conclude from Lemma~\ref{lem:21}(1) that $S(1,j+1)=0$ and so $j+1\not\in S^1$.  Note that $j-2\in S^2$ gives $j-1\not\in S^2$.  Also $j\in S^0$ gives $j\not\in S^1$.  Thus, when we apply toggles left to right starting with $S^1$, we will be able to add vertex $j$ to the set.  Thus, $j\in S^2$.

\item In this scenario, $j+1\in S^1$, and we wish to conclude that $j-1\in S^3$.  Since $j+1\in S^1$, it follows that $j\not\in S^2$ by Lemma~\ref{lem:21}(4).  Since $S(2,j-2)=1$ and $S(2,j)=0$, we have $S(3,j-1)=1$ by Lemma~\ref{lem:21}(2).

\item Since $n\in S$, we have $n\not\in S^1$.  Also, $n-2\in S^2$ implies $n-1\not\in S^2$.  Thus, when we reach the last vertex when applying $\varphi$ to $S^1$, we insert $n$.  So $n\in S^2$.
\end{enumerate}
\end{proof}

\begin{thm}\label{thm:snakeorbit}
In a $\varphi$-orbit board, consider a snake starting on the $S^i$ line. Let $c$ be the snake's composition. Consider the least $i'>i$ for which $S(i',1)=1$. (This is where the ``next" snake begins.)
\begin{enumerate}
\item If $c$ starts with 1, then $i'=i+3$.
\item If $c$ starts with 2, then $i'=i+2$.
\item The composition for the snake starting on the $S^{i'}$ line is the left cyclic rotation of $c$.
\end{enumerate}

\end{thm}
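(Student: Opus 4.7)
The plan is to prove parts (1) and (2) directly from Lemma~\ref{lem:21}, then prove part (3) by tracking the two successive snakes in parallel using Lemmas~\ref{lem:1} and~\ref{lem:2}, which are designed precisely for this situation.

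For parts (1) and (2), I analyze rows $i+1$, $i+2$, $i+3$ of column~$1$. Since $S(i,1)=1$, Lemma~\ref{lem:21}(3) yields $S(i+1,1)=0$. Whether $\tau_1$ can then insert a $1$ into $S^{i+1}$ (producing $S(i+2,1)=1$) hinges on $S(i+1,2)$. If $c$ begins with $2$ then $S(i,3)=1$, so Lemma~\ref{lem:21}(1) forces $S(i+1,2)=0$; hence $\tau_1$ inserts, giving $S(i+2,1)=1$ and $i'=i+2$. If $c$ begins with $1$ then $S(i+1,2)=1$ by definition of the snake, blocking $\tau_1$ at step $i+1$ and giving $S(i+2,1)=0$; Lemma~\ref{lem:21}(3) applied at $(i+1,2)$ now yields $S(i+2,2)=0$, so the next application of $\tau_1$ succeeds, giving $S(i+3,1)=1$ and $i'=i+3$.

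For part (3), I introduce a lock-step invariant: snake~2 lags one step behind snake~1, and snake~2's current position minus snake~1's current position is a fixed offset, equal to $(+2,-2)$ when $c_1=2$ and $(+2,-1)$ when $c_1=1$. Right after snake~1 takes its first step (to $(i,3)$ or $(i+1,2)$) and snake~2 sits at $(i',1)$, this offset holds and the pair of current positions satisfies the hypothesis of Lemma~\ref{lem:2} (in the first case) or Lemma~\ref{lem:1} (in the second). Parts~(1) and~(2) of the relevant lemma show that snake~2's next step must match snake~1's next step, and that the offset is preserved afterward. By induction, snake~2's $m$-th step equals $c_{m+1}$ for $1\le m\le k-1$, where $k$ is the number of parts of $c$.

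When snake~1 takes its $k$-th step and lands at $(a,n)$, snake~2 has executed $k-1$ steps and sits at $(a+2, n-c_1)$. Part~(3) of Lemma~\ref{lem:1} (if $c_1=1$) or Lemma~\ref{lem:2} (if $c_1=2$) places snake~2's final $1$ in column $n$ via a step of size $c_1$. Thus snake~2's composition is $c_2c_3\cdots c_kc_1$, the left cyclic rotation of $c$. I expect the main obstacle to be formulating the lock-step invariant precisely and verifying the two boundary transitions (aligning snake~2 after snake~1's first step, and terminating snake~2 once snake~1 reaches column $n$); the inductive steps themselves are essentially direct applications of the relevant case of Lemma~\ref{lem:1} or Lemma~\ref{lem:2}.
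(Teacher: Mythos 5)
Your proposal is correct and follows essentially the same route as the paper: parts (1) and (2) by direct analysis of column 1 via Lemma~\ref{lem:21}, and part (3) by running the two snakes in lock-step with the offsets $(+2,-2)$ or $(+2,-1)$ so that parts (1) and (2) of Lemma~\ref{lem:2} or Lemma~\ref{lem:1} propagate each step and part (3) of the relevant lemma supplies the final step of size $c_1$ into column $n$. The paper phrases the lock-step argument more tersely (``this same sequence of moves describes the snake \dots by Lemma~\ref{lem:1}(1,2)'') rather than as an explicit invariant, but the content is identical.
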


\begin{proof}
Without loss of generality, assume $i=0$. Let $c'$ be the composition for the snake that starts on line $S^{i'}$.

If $c$ starts with 1, then $S(0,1)=S(1,2)=1$. Then $1\not\in\tau_1(S^1)$ because $2\in S^1$. Then applying $\tau_2$ to $S_1$ removes 2 from the set.  Therefore $1,2\not\in S^2$, and thus we insert 1 when applying $\tau_1$ to $S^2$.  So $S(3,1)=1$, which proves (1). The part of $c$ after the initial 1 describes the sequence of moves for the original snake from $S(1,2)$ to the rightmost column. Since $S(1,2)=S(3,1)=1$, this same sequence of moves describes the snake that starts on line $S^3$ from the leftmost column up to column $n-1$, by Lemma~\ref{lem:1}(1,2). Then the snake with composition $c'$ must finish with a diagonal step, so $c'$ ends with 1.  Thus $c'$ is formed from $c$ by moving the initial 1 to the end.  This proves (3) for the case where $c$ begins with 1.

Otherwise $c$ starts with 2, so $S(0,1)=S(0,3)=1$. By Lemma~\ref{lem:21}(3), $S(1,1)=0$ and by Lemma~\ref{lem:21}(1), $S(1,2)=0$. Since $1,2\not\in S^1$, we insert 1 when applying $\tau_1$ to $S^1$, so $S(2,1)=1$, which proves (2). The part of $c$ after the initial 2 describes the sequence of moves for the original snake from $S(0,3)$ to the rightmost column. Since $S(0,3)=S(2,1)=1$, this same sequence of moves describes the snake that starts on line $S^2$ from the leftmost column up to column $n-2$, by Lemma~\ref{lem:2}(1,2). Suppose the snake with composition $c$ ends on line $S^k$, i.e. $S(k,n)=1$.  Then the snake starting on line $S^2$ contains $S(k+2,n-2)$, so by Lemma~\ref{lem:2}(3), $S(k+2,n)=1$.  Thus $c'$ is formed from $c$ by moving the initial 2 to the end.  This proves (3) for the case where $c$ begins with 2.

\end{proof}

\begin{ex}\label{board}
We show how knowing one snake determines an entire orbit.  Suppose we are working in
$\cali_{10}$ and we have a snake given by the composition 221121.  Then we immediately have
the part of the orbit board shown in Figure~\ref{fig:snake}.
\begin{figure}
\begin{center}\begin{tabular}{c||c|c|c|c|c|c|c|c|c|c}
&\textbf{1}&\textbf{2}&\textbf{3}&\textbf{4}&\textbf{5}&\textbf{6}&\textbf{7}&\textbf{8}&\textbf{9}&\textbf{10}\\\hline\hline\\[-1.111em]
$S^0$&{\cellcolor{red}{\color{white}\textbf{1}}}&&{\cellcolor{red}{\color{white}\textbf{1}}}&&{\cellcolor{red}{\color{white}\textbf{1}}}&&&&&\\\hline\\[-1.111em]
$S^1$&&&&&&{\cellcolor{red}{\color{white}\textbf{1}}}&&&&\\\hline\\[-1.111em]
$S^2$&&&&&&&{\cellcolor{red}{\color{white}\textbf{1}}}&&{\cellcolor{red}{\color{white}\textbf{1}}}&\\\hline\\[-1.111em]
$S^3$&&&&&&&&&&{\cellcolor{red}{\color{white}\textbf{1}}}\\
\end{tabular}\end{center}
\caption{The ones in the orbit board are an example snake.  In Example~\ref{board}, we describe how to generate an entire orbit from one snake.}
\label{fig:snake}
\end{figure}

Using Theorem~\ref{thm:snakeorbit}, we know that the next snake begins on the $S^2$ line, and has snake composition 211212.  This snake is shown in purple in Figure~\ref{fig:10orbit}.
Also by Theorem~\ref{thm:snakeorbit}, the next four snakes start on the lines have snake compositions 112122, 121221, 212211, and 122112 respectively and begin on lines $S^4$, $S^7$, $S^{10}$ and $S^{12}$.  These are shown in orange, green, blue, and brown respectively in Figure~\ref{fig:10orbit}.

Then the next snake starts on the $S^{15}$ line and has snake composition 221121.  However,
this is the snake we started with.  Therefore $S^0=S^{15}$. So this orbit has size 15, and the
ones in the brown snake on the $S^{15}$ line go on the $S^0$ line.  Every other empty position
is a 0 by Lemma~\ref{lem:21}(3).  The full orbit board is shown in Figure~\ref{fig:10orbit}.
Notice that this orbit is not reversible, so there is no simple reason for the column-sum
vector to be palindromic.
\begin{figure}
\begin{center}
\begin{minipage}{0.58\textwidth}
\begin{tabular}{c||c|c|c|c|c|c|c|c|c|c}
&\textbf{1}&\textbf{2}&\textbf{3}&\textbf{4}&\textbf{5}&\textbf{6}&\textbf{7}&\textbf{8}&\textbf{9}&\textbf{10}\\\hline\hline\\[-1.111em]
$S^0$&{\cellcolor{red}{\color{white}\textbf{1}}}&\textbf{0}&{\cellcolor{red}{\color{white}\textbf{1}}}&\textbf{0}&{\cellcolor{red}{\color{white}\textbf{1}}}&\textbf{0}&\textbf{0}&{\cellcolor{brown}{\color{white}\textbf{1}}}&\textbf{0}&{\cellcolor{brown}{\color{white}\textbf{1}}}\\\hline\\[-1.111em]
$S^1$&\textbf{0}&\textbf{0}&\textbf{0}&\textbf{0}&\textbf{0}&{\cellcolor{red}{\color{white}\textbf{1}}}&\textbf{0}&\textbf{0}&\textbf{0}&\textbf{0}\\\hline\\[-1.111em]
$S^2$&{\cellcolor{purple}{\color{white}\textbf{1}}}&\textbf{0}&{\cellcolor{purple}{\color{white}\textbf{1}}}&\textbf{0}&\textbf{0}&\textbf{0}&{\cellcolor{red}{\color{white}\textbf{1}}}&\textbf{0}&{\cellcolor{red}{\color{white}\textbf{1}}}&\textbf{0}\\\hline\\[-1.111em]
$S^3$&\textbf{0}&\textbf{0}&\textbf{0}&{\cellcolor{purple}{\color{white}\textbf{1}}}&\textbf{0}&\textbf{0}&\textbf{0}&\textbf{0}&\textbf{0}&{\cellcolor{red}{\color{white}\textbf{1}}}\\\hline\\[-1.111em]
$S^4$&{\cellcolor{orange}{\color{white}\textbf{1}}}&\textbf{0}&\textbf{0}&\textbf{0}&{\cellcolor{purple}{\color{white}\textbf{1}}}&\textbf{0}&{\cellcolor{purple}{\color{white}\textbf{1}}}&\textbf{0}&\textbf{0}&\textbf{0}\\\hline\\[-1.111em]
$S^5$&\textbf{0}&{\cellcolor{orange}{\color{white}\textbf{1}}}&\textbf{0}&\textbf{0}&\textbf{0}&\textbf{0}&\textbf{0}&{\cellcolor{purple}{\color{white}\textbf{1}}}&\textbf{0}&{\cellcolor{purple}{\color{white}\textbf{1}}}\\\hline\\[-1.111em]
$S^6$&\textbf{0}&\textbf{0}&{\cellcolor{orange}{\color{white}\textbf{1}}}&\textbf{0}&{\cellcolor{orange}{\color{white}\textbf{1}}}&\textbf{0}&\textbf{0}&\textbf{0}&\textbf{0}&\textbf{0}\\\hline\\[-1.111em]
$S^7$&{\cellcolor{green}{\color{white}\textbf{1}}}&\textbf{0}&\textbf{0}&\textbf{0}&\textbf{0}&{\cellcolor{orange}{\color{white}\textbf{1}}}&\textbf{0}&{\cellcolor{orange}{\color{white}\textbf{1}}}&\textbf{0}&{\cellcolor{orange}{\color{white}\textbf{1}}}\\\hline\\[-1.111em]
$S^8$&\textbf{0}&{\cellcolor{green}{\color{white}\textbf{1}}}&\textbf{0}&{\cellcolor{green}{\color{white}\textbf{1}}}&\textbf{0}&\textbf{0}&\textbf{0}&\textbf{0}&\textbf{0}&\textbf{0}\\\hline\\[-1.111em]
$S^9$&\textbf{0}&\textbf{0}&\textbf{0}&\textbf{0}&{\cellcolor{green}{\color{white}\textbf{1}}}&\textbf{0}&{\cellcolor{green}{\color{white}\textbf{1}}}&\textbf{0}&{\cellcolor{green}{\color{white}\textbf{1}}}&\textbf{0}\\\hline\\[-1.111em]
$S^{10}$&{\cellcolor{blue}{\color{white}\textbf{1}}}&\textbf{0}&{\cellcolor{blue}{\color{white}\textbf{1}}}&\textbf{0}&\textbf{0}&\textbf{0}&\textbf{0}&\textbf{0}&\textbf{0}&{\cellcolor{green}{\color{white}\textbf{1}}}\\\hline\\[-1.111em]
$S^{11}$&\textbf{0}&\textbf{0}&\textbf{0}&{\cellcolor{blue}{\color{white}\textbf{1}}}&\textbf{0}&{\cellcolor{blue}{\color{white}\textbf{1}}}&\textbf{0}&{\cellcolor{blue}{\color{white}\textbf{1}}}&\textbf{0}&\textbf{0}\\\hline\\[-1.111em]
$S^{12}$&{\cellcolor{brown}{\color{white}\textbf{1}}}&\textbf{0}&\textbf{0}&\textbf{0}&\textbf{0}&\textbf{0}&\textbf{0}&\textbf{0}&{\cellcolor{blue}{\color{white}\textbf{1}}}&\textbf{0}\\\hline\\[-1.111em]
$S^{13}$&\textbf{0}&{\cellcolor{brown}{\color{white}\textbf{1}}}&\textbf{0}&{\cellcolor{brown}{\color{white}\textbf{1}}}&\textbf{0}&{\cellcolor{brown}{\color{white}\textbf{1}}}&\textbf{0}&\textbf{0}&\textbf{0}&{\cellcolor{blue}{\color{white}\textbf{1}}}\\\hline\\[-1.111em]
$S^{14}$&\textbf{0}&\textbf{0}&\textbf{0}&\textbf{0}&\textbf{0}&\textbf{0}&{\cellcolor{brown}{\color{white}\textbf{1}}}&\textbf{0}&\textbf{0}&\textbf{0}\\\hline\hline\\[-1.111em]
\textbf{Total} & \textbf{6} & \textbf{3} & \textbf{4} & \textbf{4} & \textbf{4} & \textbf{4} & \textbf{4} & \textbf{4} & \textbf{3} & \textbf{6}
\end{tabular}\\\vspace{0.1 in}
\end{minipage}
\begin{minipage}{0.38\textwidth}
\begin{tabular}{lc}
\textbf{{\color{red}Red snake:}} &\textbf{{\color{red}221121}}\\
&\\\\[-1.111em]\\[-1.111em]
\textbf{{\color{purple}Purple snake:}} &\textbf{{\color{purple}211212}}\\
&\\ \\[-1.111em]\\[-1.111em]
\textbf{{\color{orange}Orange snake:}} &\textbf{{\color{orange}112122}}\\
&\\\\[-1.111em]\\[-1.111em]\\[-1.111em]
&\\[4pt]
\textbf{{\color{green}Green snake:}} &\textbf{{\color{green}121221}}\\
&\\\\[-1.111em]\\[-1.111em]\\[-1.111em]
&\\
\textbf{{\color{blue}Blue snake:}} &\textbf{{\color{blue}212211}}\\
&\\ \\[-1.111em]\\[-1.111em]
\textbf{{\color{brown}Brown snake:}} &\textbf{{\color{brown}122112}}\\
&\\
&\\
&\\
\end{tabular}
\end{minipage}
\end{center}
\caption{The unique orbit containing the snake from Figure~\ref{fig:snake} is the orbit
containing $S=1010100101$ (See Example~\ref{board}).}
\label{fig:10orbit}
\end{figure}
\end{ex}

The following should be clear now.

\begin{prop}\label{prop:snakebijection}
For any $\varphi$-orbit, the set of snake compositions is invariant under cyclic rotation.
Thus,
there is a bijection between $\varphi$-orbits of $\cali_n$ and cyclically inequivalent
compositions of $n-1$ into parts 1 and 2.  Also, an orbit $\eso$ is reversible if and only
if for each snake in the orbit with snake composition $c$, there is also a snake in the orbit
whose snake composition is $c$ in the reverse order.
\end{prop}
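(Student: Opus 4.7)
The plan is to prove the three assertions of the proposition in order, leveraging the results already established. For the cyclic invariance of snake compositions: Theorem~\ref{thm:snakeorbit}(3) says that each successive snake in a $\varphi$-orbit has composition obtained from the previous one by a single left cyclic rotation, and since the sequence of snakes within an orbit is itself cyclic, iterating produces every cyclic rotation of any given snake composition $c$. Hence the set of snake compositions appearing in the orbit is exactly the cyclic equivalence class $[c]$.

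For the bijection, I would define the map sending a $\varphi$-orbit to the cyclic class of any of its snake compositions; well-definedness is precisely the previous paragraph. To prove surjectivity, given any composition $c$ of $n-1$ into parts $1$ and $2$, I would place a snake with composition $c$ starting at cell $(0,1)$; iterating Theorem~\ref{thm:snakeorbit} then determines the starting row and composition of every subsequent snake, and Lemma~\ref{lem:21}(3) forces every remaining cell in the resulting orbit board to be $0$. This reconstructs a concrete orbit (as walked through in Example~\ref{board}) whose cyclic class is $[c]$. The same reconstruction gives injectivity: if two orbits shared a common snake composition, then after a row shift placing it in the same starting cell, Theorem~\ref{thm:snakeorbit} and Lemma~\ref{lem:21}(3) force the two orbit boards to agree entirely.

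For the reversibility criterion, the plan is to introduce $\eso^{\rev} := \{S^{\rev} : S \in \eso\}$. By Proposition~\ref{reversal}, $\eso^{\rev}$ is $\varphi$-invariant, and since reversal is an involution of $\cali_n$, $\eso^{\rev}$ is a $\varphi$-orbit of the same size as $\eso$; by definition, $\eso$ is reversible exactly when $\eso = \eso^{\rev}$. To compare the two orbits via the bijection, I would iterate Proposition~\ref{reversal} to obtain $T^i = (S^{-i})^{\rev}$ for $T = S^{\rev}$, so that row $i$ of the orbit board of $T$ is the reverse of row $-i$ of the orbit board of $S$. A snake in $\eso$ through cells $(i_0, 1), (i_1, j_1), \ldots, (i_k, n)$ with composition $c = c_1 \cdots c_k$ then corresponds, under negating both row and column indices and re-reading forward in time, to a snake in $\eso^{\rev}$ whose step sequence is reversed and hence whose composition is $c^{\rev}$. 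Thus the cyclic class associated to $\eso^{\rev}$ is $[c^{\rev}]$, and by the bijection $\eso = \eso^{\rev}$ iff $[c] = [c^{\rev}]$, iff $c^{\rev}$ is itself a snake composition of $\eso$; since that set is closed under cyclic rotation, this is equivalent to the stated condition.

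The main obstacle I anticipate is the index bookkeeping in the last paragraph: verifying that simultaneously negating row and column indices sends the $1$-cells of one snake to the $1$-cells of a valid snake whose step sequence is the reverse of the original. Once that correspondence is pinned down, the rest reduces to a straightforward assembly of Theorem~\ref{thm:snakeorbit}, Lemma~\ref{lem:21}, and the cyclic-class bijection.
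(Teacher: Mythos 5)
Your proposal is correct and follows exactly the route the paper intends: the paper states this proposition with no proof beyond the preceding remark ``The following should be clear now,'' relying on Theorem~\ref{thm:snakeorbit}, the reconstruction illustrated in Example~\ref{board}, and Proposition~\ref{reversal}. Your write-up supplies the details the paper omits, in particular the index computation $T^i=(S^{-i})^{\rev}$ showing that reversing an orbit reverses each snake composition, so the two arguments are essentially the same.
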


We are now ready to prove Propp's original conjecture.

\begin{proof}[Proof of Theorem \ref{indepsethomomesy}]
We wish to prove that for any $j\in[n]$, $\chi_j-\chi_{n+1-j}$ is 0-mesic.  It suffices to
show that for any
$\eso$, $$\sum\limits_{S\in\eso}\chi_j(S)=\sum\limits_{S\in\eso}\chi_{n+1-j}(S).$$

Since every snake in $\eso$ starts in the leftmost column and ends in the rightmost column,
the orbit has the same number of ones in the leftmost column as in the rightmost column of
the (finite version of the) orbit board
Thus, $$\sum\limits_{S\in\eso}\chi_1(S)=\sum\limits_{S\in\eso}\chi_n(S).$$
Each entry in the snake composition refers to how many columns we move right to get to the next 1 in the snake.
Thus for $j>1$,  there is a 1 in column $j$ of the orbit board for every snake
composition that has an initial segment adding to $j-1$. Similarly, there is a 1 in column
$n+1-j$ of the orbit board for every snake composition that has a final segment adding to
$j-1$.  By cyclic rotation of snake compositions, we get that there are the same number of
snake compositions in $\eso$ with an initial segment that adds to $j-1$ as there are with a
final segment that adds to $j-1$.
\end{proof}

\begin{ex}
For the orbit board in Example~\ref{board}, there is a 1 in column 4 whenever an initial segment
of a snake's composition adds to 3.
There are two snake compositions associated with this orbit that begin with 12.  They are
121221 ({\color{green} green}) and 122112 ({\color{brown} brown}).  For each of these, there
is a cyclic rotation of the snake composition that ends with 12.  These are 122112
({\color{brown} brown}) and 211212 ({\color{purple} purple}).  These give ones in column 7
(fourth column from the right).

Also there are two snake compositions associated with this orbit that begin with 21.  They
are 211212 ({\color{purple} purple}) and 212211 ({\color{blue} blue}).  For each of these,
there is a cyclic rotation of the snake composition that ends with 21.  These are 121221
({\color{green} green}) and 221121 ({\color{red} red}).  These give ones in column 7 (fourth
column from the right).

If there were snake compositions for this orbit that began with 111 (the other way to have
an initial segment adding to 3), then by cyclic rotation, there would be just as many that
end in 111.
\end{ex}

\subsection{Coxeter groups and extending Propp's conjecture}\label{subsec:Coxeter}

Theorems~\ref{indepsethomomesy} and~\ref{babyhomomesy} are for orbits of the specific action
$\varphi=\tau_n\cdots\tau_2\tau_1$. In this subsection, we describe how these homomesy results
can be generalized to some other actions in $\calt_n$, namely \emph{Coxeter elements}.

\begin{defn}
An element $w\in\calt_n$ is called a \textbf{Coxeter element} if it is a product of $\tau_1,\tau_2,\dots,\tau_n$, each used exactly once, in some order.
\end{defn}

\begin{thm}\label{thm:preserve-Coxeter}
Let $w,w'\in\calt_n$ be two Coxeter elements.  Any statistic which is a linear combination of the indicator functions $\chi_j$ is $c$-mesic under the action of $w$ if and only if it is $c$-mesic under the action of $w'$.
\end{thm}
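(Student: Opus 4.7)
The plan is to reduce to two elementary moves that connect any two Coxeter elements of $\calt_n$. Since $\tau_a$ and $\tau_b$ commute whenever $|a-b|\geq 2$ (Proposition~\ref{prop:togglescommute}), the first move is (i) swapping two commuting adjacent toggles in the product, which leaves the element of $\calt_n$ literally unchanged. The second move is (ii) the cyclic rotation $w = \tau_{i_n}\cdots\tau_{i_2}\tau_{i_1} \mapsto w' = \tau_{i_1}\tau_{i_n}\cdots\tau_{i_2} = \tau_{i_1}\, w\, \tau_{i_1}$. A standard combinatorial fact for path-shaped diagrams---via the bijection between Coxeter elements and acyclic orientations of the path, together with the fact that any two acyclic orientations of a tree are connected by source/sink reversals---tells us that any two Coxeter elements in $\calt_n$ are linked by a finite sequence of moves of types (i) and (ii). Hence it suffices to verify the theorem under one such elementary move.

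Move (i) is trivial since $w=w'$. For move (ii), conjugation by $\tau_{i_1}$ is a bijection $\cali_n \to \cali_n$ sending each $w$-orbit $\eso$ to a $w'$-orbit $\eso' := \tau_{i_1}(\eso)$ of the same size. Because a linear combination $f = \sum_j a_j \chi_j$ is $c$-mesic under $w$ (respectively $w'$) iff every $w$-orbit (respectively $w'$-orbit) has $f$-average $c$, it will suffice to prove that for each $j$ and each $w$-orbit $\eso$,
\[
\sum_{T \in \eso'} \chi_j(T) \;=\; \sum_{S \in \eso} \chi_j(\tau_{i_1}(S)) \;=\; \sum_{S \in \eso} \chi_j(S).
\]

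For $j \neq i_1$, this is immediate because $\tau_{i_1}$ alters only the bit at position $i_1$, so the last two sums agree term by term. The key case is $j = i_1$. Here I would use the observation that, since $w$ is a Coxeter element, none of $\tau_{i_2}, \ldots, \tau_{i_n}$ equals $\tau_{i_1}$, and thus none of them can change the bit at vertex $i_1$. Consequently $\chi_{i_1}(w(S)) = \chi_{i_1}(\tau_{i_1}(S))$ for every $S \in \cali_n$. Since $w$ permutes $\eso$ bijectively,
\[
\sum_{S \in \eso}\chi_{i_1}(\tau_{i_1}(S)) \;=\; \sum_{S \in \eso}\chi_{i_1}(w(S)) \;=\; \sum_{T \in \eso}\chi_{i_1}(T),
\]
as required.

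The principal obstacle I expect is the Coxeter-theoretic reduction to moves of types (i) and (ii); it is conceptually clean but may warrant a careful citation or a short self-contained lemma. Once that is in hand, the remaining verification is a brief algebraic manipulation whose entire weight rests on one observation: within a single application of a Coxeter element, the bit at vertex $i_1$ is touched exactly once, by $\tau_{i_1}$.
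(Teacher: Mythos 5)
Your proposal is correct and follows essentially the same route as the paper: reduce to a single admissible conjugation (the paper conjugates by a toggle that is initial or final in $w$, which is exactly your move (ii) after commutations), note that the conjugating toggle intertwines $w$- and $w'$-orbits, and observe that since $\tau_{i_1}$ is the unique occurrence of that toggle and is applied first, $\chi_{i_1}(w(S))=\chi_{i_1}(\tau_{i_1}(S))$, so the column sum at $i_1$ is just a cyclic shift of itself over the orbit. The connectivity step you flag as the main obstacle is handled in the paper by citing Striker--Williams and the Erikssons' source/sink characterization, matching your tree-orientation argument.
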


Since $\varphi$ is a Coxeter element in $\calt_n$, we use Theorem~\ref{thm:preserve-Coxeter} to generalize
Theorems~\ref{indepsethomomesy} and~\ref{babyhomomesy} to the action of \emph{any} Coxeter element.  This is the following corollary.

\begin{cor}\label{cor:hom-Coxeter} Let $w\in\calt_n$ be a Coxeter element, and consider the
action of $w$ on $\cali_n$.
\begin{enumerate}
\item The statistic $\chi_j-\chi_{n+1-j}$ is 0-mesic for every $1\leq j\leq n$.
\item The statistics $2\chi_1+\chi_2$ and $\chi_{n-1}+2\chi_n$ are both 1-mesic.
\end{enumerate}
\end{cor}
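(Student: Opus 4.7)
The plan is to derive this corollary as an essentially immediate consequence of Theorem~\ref{thm:preserve-Coxeter}, using Theorems~\ref{indepsethomomesy} and~\ref{babyhomomesy} as the base cases. The only observation required is that the specific action $\varphi = \tau_n \cdots \tau_2 \tau_1$ used in those theorems is itself a Coxeter element in $\calt_n$, since it is a product of $\tau_1, \ldots, \tau_n$ each used exactly once.

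First I would verify that each statistic appearing in the corollary is a linear combination of the indicator functions $\chi_j$, so that Theorem~\ref{thm:preserve-Coxeter} applies. This is immediate: $\chi_j - \chi_{n+1-j}$ is literally such a combination with coefficients $\pm 1$, while $2\chi_1 + \chi_2$ and $\chi_{n-1} + 2\chi_n$ are combinations with coefficients $1$ and $2$. No further bookkeeping is needed.

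Next, for part (1), I would fix $1 \leq j \leq n$ and invoke Theorem~\ref{indepsethomomesy} to conclude that $\chi_j - \chi_{n+1-j}$ is $0$-mesic under $\varphi$. Since $\varphi$ is a Coxeter element and $w$ is a Coxeter element, Theorem~\ref{thm:preserve-Coxeter} transfers the $0$-mesy from $\varphi$ to $w$. For part (2), the same argument applied with the statistics $2\chi_1 + \chi_2$ and $\chi_{n-1} + 2\chi_n$, using Theorem~\ref{babyhomomesy} as the base case, shows both statistics are $1$-mesic under $w$.

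There is no real obstacle in this proof; the substantive content has already been packaged into Theorem~\ref{thm:preserve-Coxeter}. The only thing to watch is to state clearly that $\varphi$ is itself a Coxeter element, so that Theorem~\ref{thm:preserve-Coxeter} may be applied symmetrically between $\varphi$ and an arbitrary Coxeter element $w$. Accordingly, the written proof can be kept to a few lines citing the three theorems and remarking on the linearity of the statistics involved.
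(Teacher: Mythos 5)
Your proposal is correct and matches the paper exactly: the paper also deduces the corollary by noting that $\varphi$ is a Coxeter element and applying Theorem~\ref{thm:preserve-Coxeter} to transfer the homomesies of Theorems~\ref{indepsethomomesy} and~\ref{babyhomomesy} (both statistics being linear combinations of the $\chi_j$) to an arbitrary Coxeter element $w$. Nothing is missing.
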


The rest of this section leads up to the proof of Theorem~\ref{thm:preserve-Coxeter}, which is near the end.

As $\calt_n$ is generated by finitely many involutions, it is the quotient of a Coxeter group.  See~\cite{bjornerbrenti} and~\cite[Ch.~11--14]{peterseneulerian} to learn about
Coxeter groups and their connections to combinatorics.  The Coxeter group with
presentation $$\left<\tau_1,\tau_2,\dots,\tau_n|\tau_i^2=1, (\tau_i\tau_{i+1})^6=1,
(\tau_i\tau_j)^2=1 \text{ for }j\geq i+2\right>,$$ for $n\geq 3$, is well known to be
infinite.  However, the toggle group $\calt_n$ is a subgroup of $\ss_{\cali_n}$ and thus
finite. Therefore $\calt_n$ has extra relations in addition to the Coxeter relations
($\tau_i^2=1$, $(\tau_i\tau_i+1)^6=1$, $(\tau_i\tau_j)^2=1$ for $j\geq i+2$), so it is not a
true Coxeter group.  Knowledge of Coxeter groups is not necessary to understand the rest of
the paper, as we explain the theory necessary to describe why we can extend the homomesy results for $\varphi$ to orbits of any Coxeter element $w\in\calt_n$.  Even though $\calt_n$ is not a true Coxeter group, we borrow the term ``Coxeter element'' from Coxeter group theory.

The following is a brief summary of material in~\cite[\S3,6]{efgjmpr}, which describes in detail how Coxeter group theory can be applied to toggle groups, and the connections between conjugation of elements and homomesy.  We discuss it in the context of $\calt_n$.

To each Coxeter element $w\in\calt_n$, we define an orientation of $\calp_n$. That is, we direct each edge of the path graph in such a way that corresponds to our Coxeter element.  To do this, we direct the edge connecting $i$ and $i+1$ in the direction of $i$ if $\tau_i$ appears to the right of $\tau_{i+1}$ in an expression of $w$, and in the direction of $i+1$ if $\tau_{i+1}$ appears to the right of $\tau_i$ in an expression of $w$.

\begin{ex}\label{ex:3426751}
The Coxeter element $w=\tau_3\tau_4\tau_2\tau_6\tau_7\tau_5\tau_1 \in \calt_7$ corresponds to the following orientation of $\calp_7$.

\begin{center}\begin{tikzpicture}[scale=8/9]
\draw[thick, <-] (1.15,0) -- (1.85,0);
\draw[thick, <-] (2.15,0) -- (2.85,0);
\draw[thick, ->] (3.15,0) -- (3.85,0);
\draw[thick, ->] (4.15,0) -- (4.85,0);
\draw[thick, <-] (5.15,0) -- (5.85,0);
\draw[thick, ->] (6.15,0) -- (6.85,0);
\draw[fill] (1,0) circle [radius=0.07];
\draw[fill] (2,0) circle [radius=0.07];
\draw[fill] (3,0) circle [radius=0.07];
\draw[fill] (4,0) circle [radius=0.07];
\draw[fill] (5,0) circle [radius=0.07];
\draw[fill] (6,0) circle [radius=0.07];
\draw[fill] (7,0) circle [radius=0.07];
\node[below] at (1,-0.1) {1};
\node[below] at (2,-0.1) {2};
\node[below] at (3,-0.1) {3};
\node[below] at (4,-0.1) {4};
\node[below] at (5,-0.1) {5};
\node[below] at (6,-0.1) {6};
\node[below] at (7,-0.1) {7};
\end{tikzpicture}
\end{center}

While $w$ has other representations as a product of toggles, each used exactly once, they are all found by taking the expression $\tau_3\tau_4\tau_2\tau_6\tau_7\tau_5\tau_1$ and applying the commutativity relations $\tau_i\tau_j=\tau_j\tau_i$ if $|i-j|\geq2$.  For example, $w=\tau_3\tau_2\tau_6\tau_4\tau_5\tau_7\tau_1$, found by moving $\tau_4$ to the right of $\tau_2\tau_6$ and swapping the order of $\tau_5$ and $\tau_7$.  Since applying the commutativity relations does not change the relative order of any pair $\tau_i,\tau_{i+1}$, this does not change the orientation of any edge.  Thus, the orientation of $\calp_n$ corresponds uniquely to the Coxeter element, and this orientation is well-defined.
\end{ex}

Shi showed that Coxeter elements correspond uniquely to acyclic orientations of the graph associated with a Coxeter group, where the vertices of the graph are the involution generators, and the edges of the graph are between pairs of noncommuting generators~\cite[Proposition~1.3]{shi1997enumeration}.  Since $\calp_n$ already has no cycles, any orientation of it corresponds to a unique Coxeter element in $\calt_n$, even though $\calt_n$ is only the quotient of a Coxeter group.  Toggle maps corresponding to source vertices are called \textbf{initial} in $w$ because they can be brought to the left by the commutativity relations.  Toggle maps corresponding to sink vertices are called \textbf{final} in $w$ because they can be brought to the right by the commutativity relations.  Vertices that are neither sources nor sinks correspond to toggles that can neither be brought to the left nor right in an expression of $w$ (while using every toggle exactly once).

Since the toggles are involutions, conjugating by an final (resp. initial) toggle $\tau_i$ of $w$ corresponds to moving it to the left (resp. right) of an expression for $w$.  For example, $\tau_5$ is final in $w=\tau_3\tau_2\tau_6\tau_4\tau_5\tau_7\tau_1$, so conjugating by $\tau_5$ gives \begin{eqnarray*}
\tau_5 w \tau_5 &=& \tau_5(\tau_3\tau_2\tau_6\tau_4\tau_5\tau_7\tau_1)\tau_5\\
&=& \tau_5(\tau_3\tau_2\tau_6\tau_4\tau_7\tau_1\tau_5)\tau_5\\
&=&\tau_5\tau_3\tau_2\tau_6\tau_4\tau_7\tau_1.
\end{eqnarray*}

This conjugation by a final element of $w$ corresponds to changing a sink into a source in the corresponding orientation of $\calp_n$.  Conjugation by an initial element would correspond to changing a source into a sink.  If we were to conjugate $w$ by $\tau_4$, which is neither initial nor final, we would get $\tau_4\tau_3\tau_2\tau_6\tau_4\tau_5\tau_7\tau_1\tau_4$ which is not a Coxeter element.

H. and K. Eriksson showed that two Coxeter elements are conjugate if and only if we can
transform the orientation for one of them into the other by a sequence that changes sinks
into sources or vice versa~\cite{eriksson2009conjugacy}.  This corresponds to a sequence of
conjugations by generators that are either initial or final; we call such conjugations
\textbf{admissible}.

In the case of $\calt_n$, any two Coxeter elements are conjugate.  This is because $\calt_n$ has generators $\tau_1,\tau_2,\dots,\tau_n$ satisfying $\tau_i^2=1$ and $(\tau_i\tau_j)^2=1$ when $|i-j|>1$.  Thus, any Coxeter elements in $\calt_n$ are conjugate~\cite[Lemma 5.1]{strikerwilliams}.  Not only that, but the proof given by Striker and Williams shows that we can get from any Coxeter element to any other one by a sequence of \emph{admissible} conjugations.  We describe their method for conjugating by toggles to transform any Coxeter element $w$ into $\varphi=\tau_n\cdots\tau_2\tau_1$.  The method is slightly modified here because we wish to transform $w$ into $\tau_n\cdots\tau_2\tau_1$ not $\tau_1\tau_2\cdots\tau_n$.  Starting with $w$, find the largest number $k$ such that $\tau_k$ is final in $w$, then push $\tau_k$ to the right and conjugate by $\tau_k$.  Repeat this until we arrive at $\varphi=\tau_n\cdots\tau_2\tau_1$.

\begin{ex}\label{ex:conjugation}
Let $w=\tau_3\tau_4\tau_2\tau_6\tau_7\tau_5\tau_1$ as in Example~\ref{ex:3426751}.  Refer to Figure~\ref{fig:3426751} for a sequence of conjugations to transform $w$ into $\varphi$ and the corresponding orientations of $\calp_n$ at each step.  By the process described above, $\varphi=u^{-1}wu$ where $u=\tau_7\tau_5\tau_6\tau_7\tau_4\tau_5\tau_6\tau_7$.
\end{ex}

\begin{figure}
\begin{center}
\begin{tabular}{ccc}
&$\tau_3\tau_4\tau_2\tau_6\tau_7\tau_5\tau_1$&\\
$=$&$\tau_3\tau_4\tau_2\tau_6\tau_5\tau_1\tau_7$&\begin{tikzpicture}[scale=8/9]
\draw[thick, <-] (1.15,0) -- (1.85,0);
\draw[thick, <-] (2.15,0) -- (2.85,0);
\draw[thick, ->] (3.15,0) -- (3.85,0);
\draw[thick, ->] (4.15,0) -- (4.85,0);
\draw[thick, <-] (5.15,0) -- (5.85,0);
\draw[thick, ->] (6.15,0) -- (6.85,0);
\draw[fill] (1,0) circle [radius=0.07];
\draw[fill] (2,0) circle [radius=0.07];
\draw[fill] (3,0) circle [radius=0.07];
\draw[fill] (4,0) circle [radius=0.07];
\draw[fill] (5,0) circle [radius=0.07];
\draw[fill] (6,0) circle [radius=0.07];
\draw[fill] (7,0) circle [radius=0.07];
\node[below] at (1,-0.1) {1};
\node[below] at (2,-0.1) {2};
\node[below] at (3,-0.1) {3};
\node[below] at (4,-0.1) {4};
\node[below] at (5,-0.1) {5};
\node[below] at (6,-0.1) {6};
\node[below] at (7,-0.1) {7};
\end{tikzpicture}\\

&\begin{tikzpicture}
\draw[semithick, ->] (0,10/9) -- (0,0);
\node[left] at (0,5/9) {conj};
\node[right] at (0,5/9) {$\tau_7$};
\end{tikzpicture}&\\

&$\tau_7\tau_3\tau_4\tau_2\tau_6\tau_5\tau_1$&\\
$=$&$\tau_7\tau_3\tau_4\tau_2\tau_6\tau_1\tau_5$&\begin{tikzpicture}[scale=8/9]
\draw[thick, <-] (1.15,0) -- (1.85,0);
\draw[thick, <-] (2.15,0) -- (2.85,0);
\draw[thick, ->] (3.15,0) -- (3.85,0);
\draw[thick, ->] (4.15,0) -- (4.85,0);
\draw[thick, <-] (5.15,0) -- (5.85,0);
\draw[thick, <-] (6.15,0) -- (6.85,0);
\draw[fill] (1,0) circle [radius=0.07];
\draw[fill] (2,0) circle [radius=0.07];
\draw[fill] (3,0) circle [radius=0.07];
\draw[fill] (4,0) circle [radius=0.07];
\draw[fill] (5,0) circle [radius=0.07];
\draw[fill] (6,0) circle [radius=0.07];
\draw[fill] (7,0) circle [radius=0.07];
\node[below] at (1,-0.1) {1};
\node[below] at (2,-0.1) {2};
\node[below] at (3,-0.1) {3};
\node[below] at (4,-0.1) {4};
\node[below] at (5,-0.1) {5};
\node[below] at (6,-0.1) {6};
\node[below] at (7,-0.1) {7};
\end{tikzpicture}\\

&\begin{tikzpicture}
\draw[semithick, ->] (0,10/9) -- (0,0);
\node[left] at (0,5/9) {conj};
\node[right] at (0,5/9) {$\tau_5$};
\end{tikzpicture}&\\

&$\tau_5\tau_7\tau_3\tau_4\tau_2\tau_6\tau_1$&\\
$=$&$\tau_5\tau_7\tau_3\tau_4\tau_2\tau_1\tau_6$&\begin{tikzpicture}[scale=8/9]
\draw[thick, <-] (1.15,0) -- (1.85,0);
\draw[thick, <-] (2.15,0) -- (2.85,0);
\draw[thick, ->] (3.15,0) -- (3.85,0);
\draw[thick, <-] (4.15,0) -- (4.85,0);
\draw[thick, ->] (5.15,0) -- (5.85,0);
\draw[thick, <-] (6.15,0) -- (6.85,0);
\draw[fill] (1,0) circle [radius=0.07];
\draw[fill] (2,0) circle [radius=0.07];
\draw[fill] (3,0) circle [radius=0.07];
\draw[fill] (4,0) circle [radius=0.07];
\draw[fill] (5,0) circle [radius=0.07];
\draw[fill] (6,0) circle [radius=0.07];
\draw[fill] (7,0) circle [radius=0.07];
\node[below] at (1,-0.1) {1};
\node[below] at (2,-0.1) {2};
\node[below] at (3,-0.1) {3};
\node[below] at (4,-0.1) {4};
\node[below] at (5,-0.1) {5};
\node[below] at (6,-0.1) {6};
\node[below] at (7,-0.1) {7};
\end{tikzpicture}\\

&\begin{tikzpicture}
\draw[semithick, ->] (0,10/9) -- (0,0);
\node[left] at (0,5/9) {conj};
\node[right] at (0,5/9) {$\tau_6$};
\end{tikzpicture}&\\

&$\tau_6\tau_5\tau_7\tau_3\tau_4\tau_2\tau_1$&\\
$=$&$\tau_6\tau_5\tau_3\tau_4\tau_2\tau_1\tau_7$&\begin{tikzpicture}[scale=8/9]
\draw[thick, <-] (1.15,0) -- (1.85,0);
\draw[thick, <-] (2.15,0) -- (2.85,0);
\draw[thick, ->] (3.15,0) -- (3.85,0);
\draw[thick, <-] (4.15,0) -- (4.85,0);
\draw[thick, <-] (5.15,0) -- (5.85,0);
\draw[thick, ->] (6.15,0) -- (6.85,0);
\draw[fill] (1,0) circle [radius=0.07];
\draw[fill] (2,0) circle [radius=0.07];
\draw[fill] (3,0) circle [radius=0.07];
\draw[fill] (4,0) circle [radius=0.07];
\draw[fill] (5,0) circle [radius=0.07];
\draw[fill] (6,0) circle [radius=0.07];
\draw[fill] (7,0) circle [radius=0.07];
\node[below] at (1,-0.1) {1};
\node[below] at (2,-0.1) {2};
\node[below] at (3,-0.1) {3};
\node[below] at (4,-0.1) {4};
\node[below] at (5,-0.1) {5};
\node[below] at (6,-0.1) {6};
\node[below] at (7,-0.1) {7};
\end{tikzpicture}\\

&\begin{tikzpicture}
\draw[semithick, ->] (0,10/9) -- (0,0);
\node[left] at (0,5/9) {conj};
\node[right] at (0,5/9) {$\tau_7$};
\end{tikzpicture}&\\

&$\tau_7\tau_6\tau_5\tau_3\tau_4\tau_2\tau_1$&\\
$=$&$\tau_7\tau_6\tau_5\tau_3\tau_2\tau_1\tau_4$&\begin{tikzpicture}[scale=8/9]
\draw[thick, <-] (1.15,0) -- (1.85,0);
\draw[thick, <-] (2.15,0) -- (2.85,0);
\draw[thick, ->] (3.15,0) -- (3.85,0);
\draw[thick, <-] (4.15,0) -- (4.85,0);
\draw[thick, <-] (5.15,0) -- (5.85,0);
\draw[thick, <-] (6.15,0) -- (6.85,0);
\draw[fill] (1,0) circle [radius=0.07];
\draw[fill] (2,0) circle [radius=0.07];
\draw[fill] (3,0) circle [radius=0.07];
\draw[fill] (4,0) circle [radius=0.07];
\draw[fill] (5,0) circle [radius=0.07];
\draw[fill] (6,0) circle [radius=0.07];
\draw[fill] (7,0) circle [radius=0.07];
\node[below] at (1,-0.1) {1};
\node[below] at (2,-0.1) {2};
\node[below] at (3,-0.1) {3};
\node[below] at (4,-0.1) {4};
\node[below] at (5,-0.1) {5};
\node[below] at (6,-0.1) {6};
\node[below] at (7,-0.1) {7};
\end{tikzpicture}\\

&\begin{tikzpicture}
\draw[semithick, ->] (0,10/9) -- (0,0);
\node[left] at (0,5/9) {conj};
\node[right] at (0,5/9) {$\tau_4$};
\end{tikzpicture}&\\

&$\tau_4\tau_7\tau_6\tau_5\tau_3\tau_2\tau_1$&\\
$=$&$\tau_4\tau_7\tau_6\tau_3\tau_2\tau_1\tau_5$&\begin{tikzpicture}[scale=8/9]
\draw[thick, <-] (1.15,0) -- (1.85,0);
\draw[thick, <-] (2.15,0) -- (2.85,0);
\draw[thick, <-] (3.15,0) -- (3.85,0);
\draw[thick, ->] (4.15,0) -- (4.85,0);
\draw[thick, <-] (5.15,0) -- (5.85,0);
\draw[thick, <-] (6.15,0) -- (6.85,0);
\draw[fill] (1,0) circle [radius=0.07];
\draw[fill] (2,0) circle [radius=0.07];
\draw[fill] (3,0) circle [radius=0.07];
\draw[fill] (4,0) circle [radius=0.07];
\draw[fill] (5,0) circle [radius=0.07];
\draw[fill] (6,0) circle [radius=0.07];
\draw[fill] (7,0) circle [radius=0.07];
\node[below] at (1,-0.1) {1};
\node[below] at (2,-0.1) {2};
\node[below] at (3,-0.1) {3};
\node[below] at (4,-0.1) {4};
\node[below] at (5,-0.1) {5};
\node[below] at (6,-0.1) {6};
\node[below] at (7,-0.1) {7};
\end{tikzpicture}\\

&\begin{tikzpicture}
\draw[semithick, ->] (0,10/9) -- (0,0);
\node[left] at (0,5/9) {conj};
\node[right] at (0,5/9) {$\tau_5$};
\end{tikzpicture}&\\

&$\tau_5\tau_4\tau_7\tau_6\tau_3\tau_2\tau_1$&\\
$=$&$\tau_5\tau_4\tau_7\tau_3\tau_2\tau_1\tau_6$&\begin{tikzpicture}[scale=8/9]
\draw[thick, <-] (1.15,0) -- (1.85,0);
\draw[thick, <-] (2.15,0) -- (2.85,0);
\draw[thick, <-] (3.15,0) -- (3.85,0);
\draw[thick, <-] (4.15,0) -- (4.85,0);
\draw[thick, ->] (5.15,0) -- (5.85,0);
\draw[thick, <-] (6.15,0) -- (6.85,0);
\draw[fill] (1,0) circle [radius=0.07];
\draw[fill] (2,0) circle [radius=0.07];
\draw[fill] (3,0) circle [radius=0.07];
\draw[fill] (4,0) circle [radius=0.07];
\draw[fill] (5,0) circle [radius=0.07];
\draw[fill] (6,0) circle [radius=0.07];
\draw[fill] (7,0) circle [radius=0.07];
\node[below] at (1,-0.1) {1};
\node[below] at (2,-0.1) {2};
\node[below] at (3,-0.1) {3};
\node[below] at (4,-0.1) {4};
\node[below] at (5,-0.1) {5};
\node[below] at (6,-0.1) {6};
\node[below] at (7,-0.1) {7};
\end{tikzpicture}\\

&\begin{tikzpicture}
\draw[semithick, ->] (0,10/9) -- (0,0);
\node[left] at (0,5/9) {conj};
\node[right] at (0,5/9) {$\tau_6$};
\end{tikzpicture}&\\

&$\tau_6\tau_5\tau_4\tau_7\tau_3\tau_2\tau_1$&\\
$=$&$\tau_6\tau_5\tau_4\tau_3\tau_2\tau_1\tau_7$&\begin{tikzpicture}[scale=8/9]
\draw[thick, <-] (1.15,0) -- (1.85,0);
\draw[thick, <-] (2.15,0) -- (2.85,0);
\draw[thick, <-] (3.15,0) -- (3.85,0);
\draw[thick, <-] (4.15,0) -- (4.85,0);
\draw[thick, <-] (5.15,0) -- (5.85,0);
\draw[thick, ->] (6.15,0) -- (6.85,0);
\draw[fill] (1,0) circle [radius=0.07];
\draw[fill] (2,0) circle [radius=0.07];
\draw[fill] (3,0) circle [radius=0.07];
\draw[fill] (4,0) circle [radius=0.07];
\draw[fill] (5,0) circle [radius=0.07];
\draw[fill] (6,0) circle [radius=0.07];
\draw[fill] (7,0) circle [radius=0.07];
\node[below] at (1,-0.1) {1};
\node[below] at (2,-0.1) {2};
\node[below] at (3,-0.1) {3};
\node[below] at (4,-0.1) {4};
\node[below] at (5,-0.1) {5};
\node[below] at (6,-0.1) {6};
\node[below] at (7,-0.1) {7};
\end{tikzpicture}\\

&\begin{tikzpicture}
\draw[semithick, ->] (0,10/9) -- (0,0);
\node[left] at (0,5/9) {conj};
\node[right] at (0,5/9) {$\tau_7$};
\end{tikzpicture}&\\

&$\tau_7\tau_6\tau_5\tau_4\tau_3\tau_2\tau_1$&\begin{tikzpicture}[scale=8/9]
\draw[thick, <-] (1.15,0) -- (1.85,0);
\draw[thick, <-] (2.15,0) -- (2.85,0);
\draw[thick, <-] (3.15,0) -- (3.85,0);
\draw[thick, <-] (4.15,0) -- (4.85,0);
\draw[thick, <-] (5.15,0) -- (5.85,0);
\draw[thick, <-] (6.15,0) -- (6.85,0);
\draw[fill] (1,0) circle [radius=0.07];
\draw[fill] (2,0) circle [radius=0.07];
\draw[fill] (3,0) circle [radius=0.07];
\draw[fill] (4,0) circle [radius=0.07];
\draw[fill] (5,0) circle [radius=0.07];
\draw[fill] (6,0) circle [radius=0.07];
\draw[fill] (7,0) circle [radius=0.07];
\node[below] at (1,-0.1) {1};
\node[below] at (2,-0.1) {2};
\node[below] at (3,-0.1) {3};
\node[below] at (4,-0.1) {4};
\node[below] at (5,-0.1) {5};
\node[below] at (6,-0.1) {6};
\node[below] at (7,-0.1) {7};
\end{tikzpicture}
\end{tabular}
\end{center}
\caption{A demonstration of how to write $\tau_7\tau_6\tau_5\tau_4\tau_3\tau_2\tau_1$ as a conjugation of $\tau_3\tau_4\tau_2\tau_6\tau_7\tau_5\tau_1$, with the corresponding orientations of $\calp_7$ at every step.}
\label{fig:3426751}
\end{figure}



We are now ready for the proof of Theorem~\ref{thm:preserve-Coxeter}.
\begin{proof}[Proof of Theorem~\ref{thm:preserve-Coxeter}]
As $w$ and $w'$ differ by a sequence of admissible conjugations, it suffices to consider the case where $w'$ differs from $w$ by a single conjugation.  Let $w\in\calt_n$ be a Coxeter element, $\tau_k$ initial or final in $w$, and $w'=\tau_k w \tau_k$.

We have the following commutative diagram.

\begin{center}
\begin{tikzpicture}
\node at (0,1.8) {$S$};
\node at (0,0) {$\tau_k(S)$};
\node at (3.4,1.8) {$w(S)$};
\node at (3.4,0) {$\tau_k(w(S))$};
\draw[semithick, ->] (0,1.3) -- (0,0.5);
\node[left] at (0,0.9) {$\tau_k$};
\draw[semithick, ->] (0.5,0) -- (2.5,0);
\node[below] at (1.5,0) {$w'$};
\draw[semithick, ->] (0.5,1.8) -- (2.5,1.8);
\node[above] at (1.5,1.8) {$w$};
\draw[semithick, ->] (3.4,1.3) -- (3.4,0.5);
\node[right] at (3.4,0.9) {$\tau_k$};
\end{tikzpicture}
\end{center}

Write a $w$-orbit $\eso=(S^0,S^1,\dots,S^{m-1})$, where as with the orbit board, $S^i=w(S^{i-1})$. Consider the superscripts to be mod $m$, the size of the orbit, so $S^0=S^m=w(S^{m-1})$.  Then, via the commutative diagram above, $\eso'=(\tau_k(S^0),\tau_k(S^1),\dots,\tau_k(S^{m-1}))$ is a $w'$-orbit.  This creates a bijection between $w$-orbits and $w'$-orbits that preserves the orbit sizes.

When $j\not=k$, $\chi_j(S^i)=\chi_j(\tau_k(S^i))$ as $\tau_k$ can only change the value of $\chi_k$, so $$\sum\limits_{S\in\eso}\chi_j(S)=\sum\limits_{S'\in\eso'}\chi_j(S').$$

If $\tau_k$ is final in $w$, then it is the first toggle applied when performing $w$.  Also $\tau_k$ only appears once when applying $w$, since $w$ is a Coxeter element.  Thus, in this scenario $\chi_k(\tau_k(S^i))=\chi_k(S^{i+1})$. So $$\sum\limits_{S'\in\eso'}\chi_k(S')=\chi_k(S^1)+\chi_k(S^2)+\cdots+\chi_k(S^{m-1})+\chi_k(S^0)=\sum\limits_{S\in\eso}\chi_k(S).$$

Analogously, if $\tau_k$ is initial in $w$, then $\chi_k(\tau_k(S^j))=\chi_k(S^{j-1})$.  Thus, $$\sum\limits_{S'\in\eso'}\chi_k(S')=\chi_k(S^{m-1})+\chi_k(S^0)+\chi_k(S^1)\cdots+\chi_k(S^{m-2})=\sum\limits_{S\in\eso}\chi_k(S).$$

Therefore, the sum of any $\chi_j$ is the same in $\eso$ and $\eso'$. As these orbits have the same size, the average of any $\chi_j$ is also the same across them. See Example~\ref{ex:shift-orbs}.
\end{proof}

\begin{remark}
In general, conjugation does not preserve homomesy of statistics under the respective maps.  Theorem~\ref{thm:preserve-Coxeter} describes only a specific scenario in which homomesy is preserved, namely when the conjugation is admissible (as is the case for any two Coxeter elements) and the statistic is a linear combination of the indicator functions $\chi_j$.  If we were to find other types of homomesic statistics for orbits under one Coxeter element, they would not necessarily remain homomesic under a different Coxeter element.
\end{remark}


\begin{ex}\label{ex:shift-orbs}
Figure~\ref{fig:shift-orbs} contains an orbit under the action $w=\tau_3\tau_4\tau_2\tau_6\tau_7\tau_5\tau_1$ starting with $1010010$ and an orbit under the action of $\varphi$ starting with $1010100$.  Notice that in the sequence of admissible conjugations to go from $w$ to $\varphi$, we conjugate by $\tau_4$ once, $\tau_5$ and $\tau_6$ each twice, and $\tau_7$ thrice, and each conjugation is by a final toggle, as described in Example~\ref{ex:conjugation}.  As in the proof of Theorem~\ref{thm:preserve-Coxeter}, in the $w$-orbit board, if we slide columns 4, 5, 6, and 7 up by 1 row, 2 rows, 2 rows, and 3 rows respectively, we get the $\varphi$-orbit board.
\end{ex}

\begin{remark}
Notice that while conjugation in the toggle group preserves the corresponding orbit
structures (total number of orbits and multiset of orbit sizes) and that a sequence of
admissible conjugations preserves the homomesic property of any statistic that is a linear
combination of the $\chi_j$ statistics, many other propositions we have made along the way
do not hold for generic Coxeter elements.  In particular, any statement about which
independent sets are in a given orbit does not extend to general Coxeter elements.  In the
orbit on the left in Figure~\ref{fig:shift-orbs}, notice that parts 1, 2, and 4 of
Lemma~\ref{lem:21} are violated, though part 3 holds for orbits under any Coxeter element by
the same proof.  Also notice that the orbit contains four symmetrical independent sets, but
contains independent sets whose reverses are not also in the orbit.  This shows that
Proposition~\ref{prop:atmost2symm} does not hold for arbitrary Coxeter elements.  In fact,
when $n$ is odd, it can be shown that any given orbit under the map
$\tau_2\tau_4\cdots\tau_{n-1}\tau_1\tau_3\tau_5\cdots\tau_n$ always either consists entirely
of symmetrical independent sets or contains no symmetrical independent sets.  This is because $\tau_1\tau_3\tau_5\cdots\tau_n$ consists of entirely commuting toggles, as does $\tau_2\tau_4\cdots\tau_{n-1}$.  So by symmetry $\tau_1$ acts as $\tau_n$, and $\tau_3$ acts as $\tau_{n-2}$, and so on.
\end{remark}

\begin{figure}
\begin{center}\phantom{1}\hfill
\begin{tabular}{c||c|c|c|c|c|c|c}
&\textbf{1}&\textbf{2}&\textbf{3}&\textbf{4}&\textbf{5}&\textbf{6}&\textbf{7}\\\hline\hline\\[-1.111em]
$S$&1&0&1&0&0&1&0\\\hline\\[-1.111em]
$w(S)$&0&0&0&0&0&0&0\\\hline\\[-1.111em]
$w^2(S)$&1&0&1&0&1&0&1\\\hline\\[-1.111em]
$w^3(S)$&0&0&0&0&0&1&0\\\hline\\[-1.111em]
$w^4(S)$&1&0&0&1&0&0&0\\\hline\\[-1.111em]
$w^5(S)$&0&1&0&0&0&0&1\\\hline\\[-1.111em]
$w^6(S)$&0&0&1&0&1&0&0\\\hline\\[-1.111em]
$w^7(S)$&1&0&0&0&0&0&1\\\hline\\[-1.111em]
$w^8(S)$&0&1&0&0&1&0&0\\\hline\\[-1.111em]
$w^9(S)$&0&0&0&1&0&0&1
\end{tabular}\hfill\begin{tabular}{c||c|c|c|c|c|c|c}
&\textbf{1}&\textbf{2}&\textbf{3}&\textbf{4}&\textbf{5}&\textbf{6}&\textbf{7}\\\hline\hline
$S'$&1&0&1&0&1&0&0\\\hline\\[-1.111em]
$\varphi(S')$&0&0&0&0&0&1&0\\\hline\\[-1.111em]
$\varphi^2(S')$&1&0&1&0&0&0&1\\\hline\\[-1.111em]
$\varphi^3(S')$&0&0&0&1&0&0&0\\\hline\\[-1.111em]
$\varphi^4(S')$&1&0&0&0&1&0&1\\\hline\\[-1.111em]
$\varphi^5(S')$&0&1&0&0&0&0&0\\\hline\\[-1.111em]
$\varphi^6(S')$&0&0&1&0&1&0&1\\\hline\\[-1.111em]
$\varphi^7(S')$&1&0&0&0&0&0&0\\\hline\\[-1.111em]
$\varphi^8(S')$&0&1&0&1&0&1&0\\\hline\\[-1.111em]
$\varphi^9(S')$&0&0&0&0&0&0&1
\end{tabular}\hfill\phantom{1}\end{center}
\caption{Left: the orbit under the action of $w=\tau_3\tau_4\tau_2\tau_6\tau_7\tau_5\tau_1$ containing \hbox{$S=1010010$}. Right: the orbit under the action of $\varphi$ containing \hbox{$S'=1010100$}.  See Example~\ref{ex:shift-orbs}.}
\label{fig:shift-orbs}
\end{figure}

\section{Enumerating Independent Sets and $\varphi$-Orbits}\label{sec:counting}

In this section we present enumerative formulas for the numbers of $\varphi $-orbits and reversible
$\varphi$-orbits of $\cali_{n}$.  Numerical data and the Online Encyclopedia of Integer Sequences led us to a
conjectured formula for the number of $\varphi$-orbits and connected them with
binary necklaces and bracelets.  This also helped point us
towards the snake-partition of orbit boards used in the proof of Propp's original
conjecture in Section~\ref{sec:togglemain}.  Our main tools are Burnside's Lemma and some bijections.

The first proposition is well known and is easy to see directly from the defining recursion.

\begin{proposition}\label{fib1}
The number of independent sets in $\cali_n$ is $F_{n+2}$, the $(n+2)^\text{nd}$ Fibonacci
number, with $F_0=0$, $F_1=1$ and $F_n=F_{n-1}+F_{n-2}$ for $n\geq 2$.
\end{proposition}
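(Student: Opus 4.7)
The plan is to establish the recurrence $|\cali_n|=|\cali_{n-1}|+|\cali_{n-2}|$ for $n\geq 3$, verify the initial conditions, and then invoke the defining recursion of the Fibonacci numbers. This is the standard textbook argument, but it fits cleanly with the binary-string perspective on $\cali_n$ introduced earlier.

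First, I would partition $\cali_n$ according to whether the vertex $n$ belongs to the independent set. If $n\notin S$, then $S$ is precisely an independent set of the subgraph on $\{1,2,\dots,n-1\}$, which is $\calp_{n-1}$, contributing $|\cali_{n-1}|$ sets. If $n\in S$, then independence forces $n-1\notin S$, so $S\setminus\{n\}$ is an independent set on $\{1,\dots,n-2\}$, i.e., an element of $\cali_{n-2}$; this contributes $|\cali_{n-2}|$ sets. (Equivalently, in binary notation, one can partition the length-$n$ strings avoiding $11$ according to whether the last bit is $0$ (prepend nothing: gives $\cali_{n-1}$) or $1$ (then the second-to-last bit must be $0$: gives $\cali_{n-2}$ by removing the final $01$).) This gives $|\cali_n|=|\cali_{n-1}|+|\cali_{n-2}|$ for $n\geq 3$.

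Next, I would check the base cases directly: $\cali_1=\{\emptyset,\{1\}\}$ has size $2=F_3$, and $\cali_2=\{\emptyset,\{1\},\{2\}\}$ has size $3=F_4$. Since the sequence $|\cali_n|$ and the sequence $F_{n+2}$ satisfy the same recurrence and agree at $n=1,2$, induction on $n$ yields $|\cali_n|=F_{n+2}$ for all $n\geq 1$.

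There is no real obstacle here; the only thing to be a little careful about is the indexing convention for the Fibonacci numbers ($F_0=0$, $F_1=1$) so that the shift by $2$ comes out correctly. Because the proposition is stated as well known and follows immediately from the recursion, a short paragraph along the lines above should suffice.
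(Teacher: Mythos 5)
Your proof is correct and is exactly the argument the paper has in mind: the paper simply asserts the proposition is "well known and easy to see directly from the defining recursion," and your partition of $\cali_n$ according to whether vertex $n$ lies in the set, together with the base cases $\#\cali_1=2=F_3$ and $\#\cali_2=3=F_4$, is that standard recursion argument carried out in full. No differences or gaps to report.
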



\begin{proposition}\label{fib2}
Let $k\in\zz^+$.\\(a) The number of symmetrical independent sets in $\cali_{2k}$ is $F_{k+1}$.\\
(b) The number of symmetrical independent sets in $\cali_{2k-1}$ is $F_{k+2}$.
\end{proposition}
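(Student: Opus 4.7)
The plan is to exploit the fact that a symmetrical independent set of $\calp_n$ is a palindromic binary string of length $n$ that avoids the substring $11$, and then reduce counting such palindromes to counting ordinary independent sets of a smaller path (for which Proposition~\ref{fib1} applies).

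First I would handle part (a), the case $n = 2k$. Here a symmetric independent set $S = S_1 S_2 \cdots S_{2k}$ satisfies $S_i = S_{2k+1-i}$, so in particular the two middle bits satisfy $S_k = S_{k+1}$. The independence condition $S_k S_{k+1} \ne 11$ therefore forces $S_k = S_{k+1} = 0$. Once this is imposed, the first $k-1$ bits $S_1 S_2 \cdots S_{k-1}$ can be any $11$-avoiding string (the constraint with $S_k=0$ is automatic), and they determine the entire palindrome. Hence the symmetric independent sets of $\calp_{2k}$ are in bijection with $\cali_{k-1}$, which by Proposition~\ref{fib1} has cardinality $F_{(k-1)+2} = F_{k+1}$.

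For part (b), the case $n = 2k-1$, the symmetry relation is $S_i = S_{2k-i}$ and position $k$ is the fixed center; in particular $S_{k-1} = S_{k+1}$. I would split into two cases based on the value of the middle bit $S_k$. If $S_k = 0$, then the independence conditions across the middle impose no further constraint on $S_{k-1}$ (and hence on $S_{k+1}$), so the first $k-1$ bits are an arbitrary element of $\cali_{k-1}$, contributing $F_{k+1}$ palindromes. If $S_k = 1$, independence forces $S_{k-1} = S_{k+1} = 0$, after which the first $k-2$ bits are an arbitrary element of $\cali_{k-2}$, contributing $F_k$ palindromes. Summing and using the Fibonacci recurrence yields $F_{k+1} + F_k = F_{k+2}$, as desired.

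There is no real obstacle here beyond carefully tracking the parity of $n$ and the middle-bit constraints; the only subtlety is the even case, where one must notice that symmetry forces the two central bits to be equal and therefore both $0$, which is what makes the answer $F_{k+1}$ rather than something involving $F_{k+2}$. Small cases ($k=1,2$) can be verified by direct enumeration to sanity-check the indexing.
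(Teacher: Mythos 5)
Your argument is correct and is essentially identical to the paper's proof: in the even case symmetry plus independence forces the two central vertices out, reducing to $\cali_{k-1}$, and in the odd case one splits on whether the center vertex is in the set, reducing to $\cali_{k-1}$ or $\cali_{k-2}$ and summing via the Fibonacci recurrence. No differences worth noting.
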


\begin{proof}
(a) By symmetry, $k\in S$ if and only if $k+1\in S$. This means neither $k$ nor $k+1$ can be in $S$, since $S$ is independent. Therefore, the first $k-1$ vertices can be any independent set, and the last $k-1$ vertices is its reverse.  Thus, the symmetrical sets in $\cali_{2k}$ are in bijection with $\cali_{k-1}$, so the number of them is $F_{k+1}$.\\
(b) If the middle vertex $k$ is not in the set, then the first $k-1$ vertices can be any
independent set, and the last $k-1$ vertices is its reverse.  Therefore, there are
$\#\cali_{k-1}=F_{k+1}$ symmetrical independent sets of this type.  If vertex $k$ is in the
set, then neither $k-1$ nor $k+1$ can be.  Then the first $k-2$ vertices can be any
independent set, and the last $k-2$ vertices is its reverse.  There are $\#\cali_{k-2}=F_k$
symmetrical independent sets of this type.  Thus, the total number of symmetrical sets in
$\cali_{2k-1}$ is $F_k+F_{k+1}=F_{k+2}$.
\end{proof}

To count the number of $\varphi$-orbits, there is a connection with binary necklaces, which we now introduce.


\begin{defn}
As with compositions, two binary strings of length $n$ are said to be \textbf{cyclically
equivalent} if one is a cyclic rotation of the other.  Otherwise, the strings are
\textbf{cyclically inequivalent}.
A \textbf{binary necklace} is an equivalence class of binary strings under cyclic
equivalence (i.e., under the action of the cyclic group $C_{n}$).
A \textbf{binary bracelet} of length $n$ is an equivalence class of length $n$ binary
strings under the action of the dihedral group $D_n$  generated by cyclic rotation and reversal.
The \textbf{length} of a
binary necklace or bracelet is the length of the any string in the equivalence class.
\end{defn}

\begin{ex}
There are six binary necklaces of length 4:\\
\{0000\},\\
\{1000,0100,0010,0001\},\\
\{1100,1001,0011,0110\},\\
\{1010,0101\},\\
\{1110,1101,1011,0111\}, and\\
\{1111\},\\
all of which are also binary bracelets. For $n\leq 5$, we get the same equivalence classes under $C_n$ and $D_n$.
For $n=6$, there are 14 binary necklaces, but only 13 binary bracelets, with the
(distinct) $C_{6}$-classes of 101100 and of
001101, which are reversals of one another, combining into a single class under the
$D_{6}$-action~\cite[Seq.~A000029, A000031]{oeis}.
\end{ex}


Now we use Burnside's Lemma~\cite[Lemma 7.24.5]{ec2}, to count length $n$ binary necklaces and
bracelets with no subsequence $11$.  For this we first need to count binary strings that
will contain no subsequence 11 even when the group action makes the first and last elements
adjacent.

\begin{lemma}\label{binstringsno11}
The number of binary strings of length $n$ with no subsequence $11$ that do not both start
and end with 1 is $F_{n-1}+F_{n+1}$, for $n\geq1$.
\end{lemma}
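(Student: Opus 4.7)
The plan is to use inclusion-exclusion against the count of all length-$n$ binary strings avoiding the substring $11$. By Proposition~\ref{fib1} this total is $F_{n+2}$, so the lemma reduces to showing that the number of such strings which both start \emph{and} end with $1$ equals $F_{n+2}-(F_{n-1}+F_{n+1})$, and then observing via a standard Fibonacci identity that this expression equals $F_{n-2}$ (with the extended convention $F_{-1}=1$, $F_0=0$).

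First I would count the strings with a $1$ in position $1$ and position $n$ that avoid $11$. For $n\geq 3$, the leading $1$ forces position $2$ to be $0$, and the trailing $1$ forces position $n-1$ to be $0$ (otherwise a $11$ appears at the boundary). The intermediate positions $3,4,\dots,n-2$ can then be any binary string of length $n-4$ avoiding $11$, so by Proposition~\ref{fib1} there are $F_{n-2}$ such strings. For $n=1$ the unique string is $1$, giving count $1$; for $n=2$ the only candidate is $11$, which violates the avoidance, giving count $0$. These small cases match $F_{-1}=1$ and $F_0=0$ respectively, so the formula ``number of bad strings $=F_{n-2}$'' holds for all $n\geq 1$.

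Next I would subtract, obtaining the answer $F_{n+2}-F_{n-2}$, and finish with the Fibonacci identity
\[
F_{n+2}-F_{n-2} \;=\; (F_{n+1}+F_n) - F_{n-2} \;=\; F_{n+1}+(F_{n-1}+F_{n-2})-F_{n-2} \;=\; F_{n+1}+F_{n-1},
\]
which is the desired total.

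The only real obstacle is the bookkeeping for small $n$; once the cases $n=1,2$ are verified against the extended Fibonacci convention, the argument for $n\geq 3$ is completely uniform. A purely combinatorial variant that avoids any extended-index convention is to split the count a priori into the three cases of $(\text{first bit},\text{last bit})\in\{(0,0),(0,1),(1,0)\}$ and recognize each piece as counting independent sets of a shorter path, but the inclusion-exclusion route above is shorter.
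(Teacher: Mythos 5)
Your proof is correct, but it takes a genuinely different route from the paper. The paper argues directly: it splits the admissible strings according to their first bit, observing that a string beginning with $1$ must have the form $10\cdots 0$ with a free middle block identified with $\cali_{n-3}$ (giving $F_{n-1}$), while a string beginning with $0$ has a free tail identified with $\cali_{n-1}$ (giving $F_{n+1}$); the two summands of the answer thus appear with no further manipulation. You instead count the complement --- strings starting \emph{and} ending with $1$ --- inside the full set of $F_{n+2}$ strings avoiding $11$, get $F_{n-2}$, and recover the stated form via the identity $F_{n+2}-F_{n-2}=F_{n+1}+F_{n-1}$. Both are sound; what the paper's decomposition buys is that no Fibonacci identity and no extended-index convention $F_{-1}=1$ are needed, whereas your route buys a slightly more mechanical computation at the price of handling $n=1,2$ (and, strictly, $n=3$, where the ``middle block of length $n-4$'' is degenerate and the two forced zeros coincide in position $2$) as separate checks. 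You did flag and verify those small cases, and your closing remark essentially describes the paper's approach as the alternative, so nothing is missing.
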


\begin{proof}
This formula can be easily confirmed for $n=1$ and $n=2$, so assume $n\geq3$.  There are two
types of binary strings $s$:

\textbf{Case 1:}  If $s$ begins with 1, then it has the form $10\_\_\_\_\_\_\_\_\_0$, where
the blank space represents an independent set of $\calp_{n-3}$.  So there are $F_{n-1}$ strings of this form.

\textbf{Case 2:} If $s$ begins with 0, then it has the form $0\_\_\_\_\_\_\_\_\_\_\_$, where
the blank space represents an independent set of $\calp_{n-1}$.  So there are $F_{n+1}$
strings of this form.
\end{proof}

\begin{lemma}[Burnside's Lemma]
Let $Y$ be a finite set and $G$ a subgroup of $\ss_Y$.  For each $w\in G$, let
$\fix(w)=\{y\in Y: w(y)=y\}$ be the set of elements of $Y$ fixed by $w$.  Let $Y/G$ be the
set of orbits of $G$.  Then $$\#(Y/G)=\frac{1}{\#G}\sum\limits_{w\in G}\#\fix(w).$$
\end{lemma}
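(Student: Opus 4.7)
The plan is to prove Burnside's Lemma by the standard double-counting argument on the incidence set
\[
T = \{(w,y)\in G\times Y : w(y)=y\}.
\]
Counting $T$ by summing over $w\in G$ first gives $\#T = \sum_{w\in G}\#\fix(w)$, which is the right-hand side (up to the factor of $\#G$). Counting $T$ instead by summing over $y\in Y$ gives $\#T = \sum_{y\in Y}\#\stab(y)$, where $\stab(y)=\{w\in G : w(y)=y\}$ is the stabilizer subgroup of $y$.

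The key step is then to apply the Orbit-Stabilizer Theorem: for each $y\in Y$, the map $w\stab(y)\mapsto w(y)$ is a bijection from the coset space $G/\stab(y)$ onto the orbit $\eso_y$ of $y$, so $\#\stab(y) = \#G/\#\eso_y$. Substituting gives
\[
\#T \;=\; \sum_{y\in Y}\frac{\#G}{\#\eso_y} \;=\; \#G\sum_{y\in Y}\frac{1}{\#\eso_y}.
\]
Now I would reorganize the last sum by grouping the $y$'s according to which orbit they lie in: for each orbit $\eso\in Y/G$, its $\#\eso$ elements each contribute $1/\#\eso$, so their total contribution is $1$. Summing over orbits yields $\sum_{y\in Y}\tfrac{1}{\#\eso_y} = \#(Y/G)$, hence $\#T = \#G\cdot\#(Y/G)$.

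Equating the two expressions for $\#T$ and dividing by $\#G$ yields the claimed identity $\#(Y/G) = \tfrac{1}{\#G}\sum_{w\in G}\#\fix(w)$. There is no genuine obstacle here; the only subtle point to verify carefully is the Orbit-Stabilizer bijection $G/\stab(y)\to \eso_y$, which just requires checking that $w(y)=w'(y)$ if and only if $w^{-1}w'\in\stab(y)$. Since this lemma is entirely standard and is invoked in the next subsection merely as an enumeration tool, I would keep the write-up short and cite \cite[Lemma 7.24.5]{ec2} as the reference for readers wanting additional detail.
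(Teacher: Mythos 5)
Your proof is correct: the double-counting of the incidence set $\{(w,y): w(y)=y\}$ combined with the Orbit-Stabilizer Theorem is the standard argument, and every step checks out. The paper itself gives no proof of this lemma at all --- it simply cites \cite[Lemma 7.24.5]{ec2} and uses the statement as a black box --- so your short write-up (or just the citation, as you suggest at the end) is entirely in keeping with how the result is treated there.
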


\begin{prop}\label{countnecklaces}
The number of binary necklaces of length $n$ with no subsequence 11 is given by~\cite[Seq.~A000358]{oeis}:
\begin{equation}\label{eq:necklaces}
\frac{1}{n}\sum\limits_{d\mid n}\phi(n/d)(F_{d-1}+F_{d+1}),
\end{equation}
where $\phi$ represents Euler's totient function.
\end{prop}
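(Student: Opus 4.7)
The plan is a direct application of Burnside's Lemma to the action of the cyclic group $C_n$ on the set $Y$ of length-$n$ binary strings with no $11$ subsequence that do not simultaneously start and end with $1$. The point of Lemma~\ref{binstringsno11} is precisely to count these ``cyclically 11-free'' strings: a string in $Y$ is exactly a length-$n$ binary string whose cyclic arrangement has no two adjacent $1$s, and each binary necklace of length $n$ with no $11$ (cyclically) corresponds to a $C_n$-orbit on $Y$. Thus the quantity to compute is $\#(Y/C_n)$.

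First I would verify that $C_n$ acts on $Y$, which is immediate since cyclic rotation preserves the cyclic adjacency pattern. Then by Burnside,
\[
\#(Y/C_n)=\frac{1}{n}\sum_{k=0}^{n-1}\#\fix(g^k),
\]
where $g$ denotes rotation by one position. The main step is to identify $\fix(g^k)$: a string is fixed by $g^k$ if and only if it is periodic with period $d=\gcd(k,n)$, meaning it is determined by its first $d$ entries, and those entries themselves must be cyclically $11$-free (otherwise repeating them around the length-$n$ cycle would create a $11$). Hence $\fix(g^k)$ is in bijection with $Y_d$, the set of length-$d$ cyclically $11$-free strings, which by Lemma~\ref{binstringsno11} has cardinality $F_{d-1}+F_{d+1}$.

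Next I would reindex the sum by $d\mid n$. A standard count shows that the number of $k\in\{0,1,\dots,n-1\}$ with $\gcd(k,n)=d$ equals $\phi(n/d)$. Grouping terms yields
\[
\#(Y/C_n)=\frac{1}{n}\sum_{d\mid n}\phi(n/d)\,(F_{d-1}+F_{d+1}),
\]
as desired.

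I do not expect any serious obstacle here; the only mildly delicate point is confirming the bijection between $\fix(g^k)$ and $Y_d$, in particular that the cyclic $11$-freeness of the full length-$n$ string is equivalent to cyclic $11$-freeness of the length-$d$ block. This follows because the full string is simply $n/d$ concatenated copies of the block read cyclically, so every cyclic adjacency in the length-$n$ string is a cyclic adjacency in the length-$d$ block (and vice versa).
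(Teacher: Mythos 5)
Your proposal is correct and follows essentially the same route as the paper: Burnside's Lemma applied to the $C_n$-action on cyclically $11$-free strings, with fixed points of a rotation of order $n/d$ identified with length-$d$ cyclically $11$-free strings counted by Lemma~\ref{binstringsno11}, and the sum reindexed over divisors using the $\phi(n/d)$ count. You simply spell out the period-$d$ bijection in more detail than the paper does.
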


\begin{proof}
In the context of Burnside's Lemma, $G$ is the cyclic group $C_n$ of order $n$ and $Y$ is
the set of binary strings of length $n$ with no subsequence 11 that also do not both start
and end with 11.  For any $d|n$, the number of elements of $Y$ fixed by a group element of
order $n/d$ in $C_{n}$ is $F_{d-1}+F_{d+1}$ by Lemma~\ref{binstringsno11}.  By elementary
group theory, there are $\phi (n/d)$ elements of order $n/d$, and the result follows.
\end{proof}

\begin{prop}
The number of binary bracelets of length $n$ with no subsequence 11
is given by
\begin{equation}\label{eq:bracelets}
\frac{1}{2}\left(F_{\left\lfloor n/2\right\rfloor+2}+\frac{1}{n}\sum\limits_{d\mid
n}\phi(n/d)(F_{d-1}+F_{d+1})\right).
\end{equation}
This sequence appears as~\cite[Seq.~A129526]{oeis}.
\end{prop}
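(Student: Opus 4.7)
The plan is to apply Burnside's Lemma to the dihedral group $D_n$ acting on the set $Y$ of length-$n$ binary strings that avoid the subsequence 11 cyclically (i.e., that do not both start and end with 1). Writing $D_n = C_n \sqcup R$ where $R$ is the set of $n$ reflections, we have
$$\#(Y/D_n) = \frac{1}{2n}\left(\sum_{g \in C_n}\#\fix(g) + \sum_{g \in R}\#\fix(g)\right).$$
The rotation sum was computed in the proof of Proposition~\ref{countnecklaces}, giving $\sum_{g \in C_n}\#\fix(g) = \sum_{d \mid n}\phi(n/d)(F_{d-1}+F_{d+1})$. So it suffices to show that $\sum_{g \in R}\#\fix(g) = n F_{\lfloor n/2\rfloor+2}$; plugging into the display above then produces the claimed formula algebraically.

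For each reflection, a fixed string is determined by its restriction to a fundamental domain of the involution on vertex positions, subject to the cyclic no-11 condition. I would organize the reflection count by the parity of $n$. For $n$ odd, every reflection fixes exactly one vertex and pairs the remaining $n-1$; placing the fixed vertex at position 0, a fixed string is encoded by $s_0 s_1 \cdots s_{(n-1)/2}$ of length $(n+1)/2$. The cyclic no-11 constraint reduces to the usual no-11 constraint on this half, together with the single extra forced condition $s_{(n-1)/2}=0$, arising because the adjacency between positions $(n-1)/2$ and $(n+1)/2$ becomes a self-adjacency. The count of independent sets of $\calp_{(n+1)/2}$ whose last bit is 0 is $F_{(n+1)/2+1} = F_{\lfloor n/2\rfloor+2}$, and multiplying by $n$ reflections gives the claim in this case.

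For $n$ even, the $n$ reflections split into two types of size $n/2$. The ``through-vertex'' reflections fix two opposite vertices, giving a fundamental domain of length $n/2+1$ with \emph{no} extra boundary constraint, and so yield $F_{n/2+3}$ fixed strings each. The ``through-edge'' reflections fix no vertex, give a fundamental domain of length $n/2$, and force both end bits to be 0 (the two edges pierced by the axis each become a self-adjacency), yielding $F_{n/2}$ fixed strings each. Hence the total reflection contribution is $(n/2)(F_{n/2+3}+F_{n/2})$, which equals $n F_{n/2+2} = n F_{\lfloor n/2 \rfloor + 2}$ by the identity $F_{n/2+3}+F_{n/2} = F_{n/2+2} + F_{n/2+1} + F_{n/2} = 2F_{n/2+2}$.

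The main obstacle is a careful bookkeeping question: for each reflection, which of the $n$ cyclic adjacency constraints become constraints already present in the fundamental domain, and which become a self-adjacency that forces a bit to be 0. Once that is sorted out, the sub-counts are all standard path-graph independent set tallies (Proposition~\ref{fib1} and the obvious variants for fixed first or last bit), and the proof closes by substituting into Burnside's Lemma.
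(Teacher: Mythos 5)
Your proposal is correct and follows essentially the same route as the paper: Burnside's Lemma for $D_n$ acting on cyclically 11-avoiding strings, reusing the rotation count from the necklace proposition, and evaluating the reflection contributions by the parity of $n$ and the type of reflection (through-vertex versus through-edge), with the same Fibonacci identities closing the computation. The only cosmetic difference is that for the through-vertex reflections in the even case you count the full fundamental domain directly as $F_{n/2+3}$, where the paper splits into four sub-cases by the values at the two fixed positions and then recombines; the paper also disposes of $n\leq 4$ by direct check, a caveat worth adding for the degenerate small dihedral groups.
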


\begin{proof}
The cases of $n\leq4$ can be computed case by case, so we assume $n\geq5$.  As above, $Y$ is the set of binary strings of length $n$ with no
subsequence 11 that also do not both start and end with 1, but now $G = D_{n}$.
By Burnside's Lemma, the number of
binary bracelets with no subsequence 11 is \begin{eqnarray*}\frac{1}{2n}\sum\limits_{w\in
D_n}\#\fix(w)&=&\frac{1}{2n}\left(\sum\limits_{w\text{ reflection in }
D_n}\#\fix(w)+\sum\limits_{w\text{ rotation in }
D_n}\#\fix(w)\right)\\&=&\frac{1}{2n}\left(\sum\limits_{w\text{ reflection in }
D_n}\#\fix(w)+\sum\limits_{d\mid n}\phi(n/d)(F_{d-1}+F_{d+1})\right).\end{eqnarray*}
The question now is what is fixed by a reflection?  We split this into two cases.

\textbf{Case 1: $\mathbf{n}$ odd.}
Here each reflection leaves exactly one point fixed.  Without loss of generality, assume that we are working with the
reflection that reverses a string, which fixes exactly the palindromic
strings.  Since our binary strings cannot both start and end with 1, any palindromic string
both begins and ends with 0.

Similar to the proof of Lemma \ref{binstringsno11}, it is easy to show that there
are $F_{(n+1)/2}$ palindromic strings with 0 in the center, and $F_{(n-1)/2}$ with 1 in the
center. Thus, there are $F_{(n+1)/2}+F_{(n-1)/2}=F_{(n+3)/2}$ strings fixed by a
reflection.

Therefore, in the odd $n$ case, the number of binary bracelets with no subsequence 11
is $$\frac{1}{2n}\left(nF_{(n+3)/2}+\sum\limits_{d\mid n}\phi(n/d)(F_{d-1}+F_{d+1})\right)=
\frac{1}{2}\left(F_{(n+3)/2}+\frac{1}{n}\sum\limits_{d\mid n}\phi(n/d)(F_{d-1}+F_{d+1})\right).$$

\textbf{Case 2: $\mathbf{n}$ even.} Here there are two types of reflections of a regular
$n$-gon, those that fix no vertices and those that fix two vertices.


For the $n/2$ reflections that fix no vertices, we assume without loss of generality that we
are working with the reflection that reverses a string, which fixes exactly
the palindromic strings.  It is straightforward to show there are $F_{n/2}$ such strings.



For the $n/2$ reflections that fix two vertices, we assume without loss of generality that we are working with the reflection that fixes the digits in positions 1 and $n/2+1$.  There are four possibilities, because the digits in positions 1 and $n/2+1$ can each be 0 or 1. 
It is easy to show that there are $F_{n/2+1}$ such strings with 0 in both fixed positions, $F_{n/2-1}$ with 1 in both fixed positions, and $2F_{n/2}$ with different values in the fixed positions.





Adding together the four possibilities, there are \begin{eqnarray*}F_{n/2-1}+F_{n/2}+F_{n/2}+F_{n/2+1}&=&F_{n/2+1}+F_{n/2}+F_{n/2+1}\\
&=&F_{n/2+1}+F_{n/2+2}\end{eqnarray*} such strings.

Therefore, in the even $n$ case, the number of binary bracelets with no subsequence 11 is \begin{eqnarray*}&&\frac{1}{2n}\left(\frac{n}{2}\left(F_{n/2}+F_{n/2+1}+F_{n/2+2}\right)+\sum\limits_{d\mid n}\phi(n/d)(F_{d-1}+F_{d+1})\right)\\&=&
\frac{1}{2}\left(F_{n/2+2}+\frac{1}{n}\sum\limits_{d\mid n}\phi(n/d)(F_{d-1}+F_{d+1})\right).\end{eqnarray*}

In both the odd and even cases, the formula holds.
\end{proof}

\begin{cor}\label{numreversible}
The number of length $n$ binary necklaces with no subsequence 11 which are the same under reversal
is $F_{\lfloor n/2\rfloor+2}$.
\end{cor}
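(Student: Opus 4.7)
The plan is to deduce this by comparing the preceding two enumeration formulas, using the standard relationship between necklaces and bracelets. Write $N_n$ for the number of length-$n$ binary necklaces with no subsequence $11$ (the quantity in \eqref{eq:necklaces}), $B_n$ for the number of corresponding bracelets \eqref{eq:bracelets}, and $A_n$ for the number of necklaces of interest here (those equal to their own reversal as necklaces).

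The key observation I would make first is that a bracelet is a $D_n$-orbit on binary strings, whereas a necklace is a $C_n$-orbit. Since $C_n$ is an index-two subgroup of $D_n$, each $D_n$-orbit decomposes into either one $C_n$-orbit (precisely when the necklace is closed under reversal, i.e.\ the necklace equals its reversal) or two $C_n$-orbits (a necklace and its distinct reversal). Restricting to strings with no subsequence $11$ on a cycle (which is a $D_n$-stable condition, so both equivalence relations make sense on the same underlying set $Y$), this yields the bookkeeping identity
\begin{equation*}
B_n \;=\; A_n + \frac{N_n - A_n}{2} \;=\; \frac{N_n + A_n}{2},
\end{equation*}
so that $A_n = 2B_n - N_n$.

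Next I would substitute the formulas for $N_n$ and $B_n$ from \eqref{eq:necklaces} and \eqref{eq:bracelets}:
\begin{equation*}
A_n \;=\; 2\cdot\frac{1}{2}\!\left(F_{\lfloor n/2\rfloor+2}+\frac{1}{n}\sum_{d\mid n}\phi(n/d)(F_{d-1}+F_{d+1})\right) \;-\; \frac{1}{n}\sum_{d\mid n}\phi(n/d)(F_{d-1}+F_{d+1}),
\end{equation*}
and the two sums cancel, leaving $A_n = F_{\lfloor n/2\rfloor+2}$, as desired.

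I do not anticipate a serious obstacle, since the two preceding propositions have already done all of the Burnside bookkeeping. The only subtlety worth being careful about is the interpretation of ``the same under reversal'': a necklace $[w]_{C_n}$ is fixed by reversal as a necklace iff $w^{\rev}$ is a cyclic rotation of $w$, which is exactly the condition that its $D_n$-orbit coincides with a single $C_n$-orbit rather than splitting into two. One could alternatively give an independent Burnside-style proof of this count by summing the fixed-point contributions only from the reflections in $D_n$ (which, as computed in the proof of the bracelet formula, give the contribution $nF_{\lfloor n/2\rfloor+2}$ in total), but the subtraction $A_n = 2B_n - N_n$ is the quickest route given what is already in hand.
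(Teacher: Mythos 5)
Your proof is correct and follows essentially the same route as the paper: both set up the identity $B_n = A_n + \tfrac{1}{2}(N_n - A_n)$ relating bracelets to necklaces split by whether they equal their own reversal, then eliminate to get $A_n = 2B_n - N_n$ and substitute the two preceding formulas. The only difference is that you spell out the cancellation explicitly, which the paper leaves implicit.
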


\begin{proof}
Let $S$ be this number, and $D$ the number of such necklaces which are not equal to their
reversals.  Then $S+D=(\ref{eq:necklaces} )$ and $S+\frac{1}{2}D=(\ref{eq:bracelets}
)$ (equations above).  Eliminating $D$ gives the result.
\end{proof}
\begin{ex}
There are 10 binary necklaces of length 9 without the subsequence 11. One representative
from each of them is as follows: 000000000, 100000000, 101000000, 100100000, 100010000,
101010000, 100100100, 101010100, 101001000, 000100101. For each of the last two, the reverse
is not a cyclic rotation of itself, and thus not the same necklace. These are the necklaces
that do not count in Corollary~\ref{numreversible}.  There are $8=F_6$ other necklaces.
\end{ex}

\begin{thm}\label{thm:countcomps}
There is a bijection between the set of binary necklaces of length $n$ with no subsequence
11, and cyclically inequivalent compositions of $n$ with each part equal to 1 or 2.
Therefore, by Proposition~\ref{countnecklaces}, the number of cyclically inequivalent
compositions of $n$ with each part equal to 1 or 2 is $\frac{1}{n}\sum\limits_{d\mid
n}\varphi(n/d)(F_{d-1}+F_{d+1})$~\cite[Seq.~A000358]{oeis}.
\end{thm}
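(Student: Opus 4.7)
The plan is to construct an explicit bijection between compositions of $n$ with parts in $\{1,2\}$ and binary strings of length $n$ with no cyclic ``11'' substring, then show that it descends to a bijection on cyclic equivalence classes on both sides. The counting formula then follows immediately from Proposition~\ref{countnecklaces}.

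Define the forward map $\beta$ by concatenation of blocks: replace each part equal to $1$ with the bit ``0'' and each part equal to $2$ with the pair ``10''. If $c = (c_1, \ldots, c_k)$ is a composition of $n$, then $\beta(c)$ is a binary string of length $n$. Every ``1'' in $\beta(c)$ is the first bit of a ``10'' block and is therefore immediately followed by a ``0'', so $\beta(c)$ has no ``11'' substring; moreover every block ends in ``0'', so $\beta(c)$ itself ends in ``0'' and the cyclic wrap-around also cannot produce ``11''. For the inverse $\gamma$, given such a string $s$, parse it left to right: if the current bit is ``1'', pair it with the following ``0'' (which exists by the no-``11'' property) as a single ``2''-part; if the current bit is ``0'', take it alone as a ``1''-part. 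This parsing is well-defined provided we start at a \emph{valid} position, meaning one whose cyclic predecessor in $s$ is not ``1''. Valid starting positions are precisely the block-starts, and parsings from two different valid starts yield compositions that are cyclic rotations of one another.

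To descend to cyclic equivalence classes, observe that rotating a composition by moving $c_1$ to the end corresponds to cyclically rotating $\beta(c)$ by $c_1 \in \{1,2\}$ positions; hence cyclically equivalent compositions map to cyclically equivalent strings, and $\beta$ induces a well-defined map on cyclic classes. Conversely, cyclic rotation of $s$ permutes the set of valid starting positions, so the cyclic class of the composition produced by parsing is independent of which representative of the string's cyclic class is chosen. These induced maps are mutual inverses, establishing the desired bijection; combining with Proposition~\ref{countnecklaces} yields the stated formula. The main (though largely routine) obstacle will be tracking carefully the two different cyclic $C_n$-actions in play -- single-character shifts on strings versus single-part shifts of variable width on compositions -- and verifying that they are compatible under $\beta$, in particular that distinct cyclic classes of compositions cannot collide under $\beta$ and that every cyclic class of valid strings is hit.
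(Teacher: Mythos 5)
Your block-substitution bijection ($1\mapsto 0$, $2\mapsto 10$, then descend to cyclic classes) is the same construction the paper uses, just written in the inverse direction: the paper starts from a necklace representative beginning with $0$ and parses each ``01'' as a $2$ and each remaining ``0'' as a $1$, checking as you do that rotating the composition by one part corresponds to rotating the string by one or two characters. The proposal is correct and essentially matches the paper's proof, with somewhat more explicit attention to well-definedness on both quotients.
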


\begin{proof}
Take a binary necklace of length $n$ with no subsequence 11, and write it in the form of a
string $s$ so the first character is 0, which exists because there is no subsequence 11.
Then create a composition of $n$ into parts 1 and 2, by replacing each occurrence of 01 in
$s$ with 2, and each occurrence of 0 in $s$ not followed by 1 with a 1.  For example
\begin{center}\begin{tabular}{ccc}010010001&$\longleftrightarrow$&212112.
\end{tabular}
\end{center}
To show that this bijection is well-defined, if we cyclically rotate the composition to the left, we get a cyclically equivalent composition, but this corresponds to rotating the string $s$ to the left once or twice depending on if the first part of the composition is 1 or 2.  Therefore, the new string is part of the same necklace.  An example of this is shown below.  The strings in the necklace that begin with 1 have no corresponding composition.\begin{center}\begin{tabular}{ccc}010010001&$\longleftrightarrow$&212112\\
100100010&$\longleftrightarrow$&\\
001000101&$\longleftrightarrow$&121122\\
010001010&$\longleftrightarrow$&211221\\
100010100&$\longleftrightarrow$&\\
000101001&$\longleftrightarrow$&112212\\
001010010&$\longleftrightarrow$&122121\\
010100100&$\longleftrightarrow$&221211\\
101001000&$\longleftrightarrow$&
  \end{tabular}\end{center}\vspace{-0.27 in}
\end{proof}

\begin{cor}\label{cor:countcomps}
The number of cyclically inequivalent compositions $c$ of $n$ with each part equal to 1 or 2, for which the reverse order of $c$ is cyclically equivalent to $c$ is $F_{\lfloor n/2\rfloor+2}$.
\end{cor}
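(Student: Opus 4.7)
The plan is to deduce this from Corollary~\ref{numreversible} by showing that the bijection of Theorem~\ref{thm:countcomps} between binary necklaces of length $n$ with no factor $11$ and cyclically inequivalent compositions of $n$ into parts $1$ and $2$ is equivariant with respect to reversal.  Specifically, I will show that a composition lies in a cyclic class fixed by reversal if and only if its image necklace is fixed by reversal; then Corollary~\ref{numreversible} supplies the count $F_{\lfloor n/2\rfloor+2}$ immediately.

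To check the equivariance, I would encode the bijection via the substitution $\sigma(1)=0$ and $\sigma(2)=01$, so that a composition $(c_1,\ldots,c_p)$ maps to the binary string $\sigma(c_1)\sigma(c_2)\cdots\sigma(c_p)$, which automatically begins with $0$.  Reversing this string character by character yields $\tilde{\sigma}(c_p)\cdots\tilde{\sigma}(c_1)$, where $\tilde{\sigma}(1)=0$ and $\tilde{\sigma}(2)=10$, while the reverse composition $c^{\rev}$ maps under $\sigma$ to $\sigma(c_p)\cdots\sigma(c_1)$.  I would then argue that these two strings represent the same necklace: both are obtained by concatenating, in the same cyclic order, the blocks corresponding to $c_p,c_{p-1},\ldots,c_1$, and they differ only in how each $2$-block is ``cut'' across the wrap-around (either as $01$ beginning with $0$, or as $10$ ending with $0$), which is exactly a cyclic rotation of the underlying cyclic string.

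The main obstacle will be making this cyclic-rotation step airtight in all cases, particularly at the boundary when $c_p=2$ and the character-by-character reverse begins with a $1$.  I plan to handle this by recording a short lemma: any binary string with no cyclic factor $11$ admits a unique block decomposition into pieces $0$ and $01$ once a starting $0$ is fixed; the induced cyclic sequence of block-types in $\{1,2\}$ is an invariant of the necklace; and this cyclic sequence is reversed precisely when the necklace is reversed.  With this lemma in place, the equivariance is immediate, and combining it with Corollary~\ref{numreversible} yields the desired count $F_{\lfloor n/2\rfloor+2}$.
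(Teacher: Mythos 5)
Your proposal is correct and follows essentially the same route as the paper: the paper's proof simply cites Corollary~\ref{numreversible} together with the bijection from the proof of Theorem~\ref{thm:countcomps}, implicitly relying on exactly the reversal-equivariance you spell out. Your block-decomposition lemma (with the observation that reversing the necklace turns each $01$-block into a $10$-block, whose re-decomposition from a fixed starting $0$ yields a cyclic rotation of the reversed composition) is a valid and welcome filling-in of the detail the paper leaves to the reader.
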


\begin{proof}
This follows easily from Corollary~\ref{numreversible} and the proof of Theorem~\ref{thm:countcomps}.
\end{proof}

\begin{theorem}\label{thm:countorbits}
The total number of $\varphi$-orbits of $\cali_n$ is
$\frac{1}{n-1}\sum\limits_{d|(n-1)}\phi((n-1)/d)(F_{d-1}+F_{d+1})$.  The total number of
reversible orbits is $F_{\lceil n/2\rceil+1}$.
\end{theorem}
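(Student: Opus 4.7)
The plan is to deduce Theorem~\ref{thm:countorbits} as an almost immediate corollary of the snake bijection (Proposition~\ref{prop:snakebijection}) combined with the composition counts already established in Theorem~\ref{thm:countcomps} and Corollary~\ref{cor:countcomps}.

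For the first statement, Proposition~\ref{prop:snakebijection} gives a bijection between $\varphi$-orbits of $\cali_n$ and cyclically inequivalent compositions of $n-1$ with each part equal to $1$ or $2$. Thus I would simply apply Theorem~\ref{thm:countcomps} with $n$ replaced by $n-1$, which yields
\[
\frac{1}{n-1}\sum_{d \mid (n-1)} \phi\!\left(\tfrac{n-1}{d}\right)\bigl(F_{d-1}+F_{d+1}\bigr).
\]

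For the second statement, the key observation is the second half of Proposition~\ref{prop:snakebijection}: a $\varphi$-orbit is reversible if and only if, for each snake composition $c$ appearing in the orbit, the reverse of $c$ also appears as a snake composition in the same orbit. By Theorem~\ref{thm:snakeorbit} the snake compositions of a given orbit form a single cyclic equivalence class, so the reversibility condition is equivalent to saying that this cyclic equivalence class is invariant under reversal of compositions. Hence reversible orbits are in bijection with those cyclically inequivalent compositions $c$ of $n-1$ (into parts $1$ and $2$) for which the reverse of $c$ is cyclically equivalent to $c$. By Corollary~\ref{cor:countcomps} applied with $n$ replaced by $n-1$, this count is $F_{\lfloor (n-1)/2\rfloor + 2}$.

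The final bookkeeping step is to verify the index identity $\lfloor (n-1)/2\rfloor + 2 = \lceil n/2\rceil + 1$, which is a quick case check on the parity of $n$: if $n=2k$ both sides equal $k+1$, and if $n=2k+1$ both sides equal $k+2$. There is no real obstacle here; the entire argument is a matter of correctly transporting the counts along the snake bijection and matching the floor/ceiling conventions, since all of the substantial combinatorial work has already been done in Section~\ref{sec:togglemain} and in the preceding propositions of this section.
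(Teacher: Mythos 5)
Your proposal is correct and is essentially identical to the paper's own proof, which likewise combines the bijection of Proposition~\ref{prop:snakebijection} with Theorem~\ref{thm:countcomps} (for all orbits) and Corollary~\ref{cor:countcomps} (for reversible orbits), and then notes the identity $\lfloor(n-1)/2\rfloor+2=\lceil n/2\rceil+1$. Your write-up simply spells out the reversibility criterion and the parity check in a bit more detail than the paper does.
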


\begin{proof}
Combine the bijection of Proposition~\ref{prop:snakebijection} with
Theorem~\ref{thm:countcomps} for orbits and with Corollary~\ref{cor:countcomps},
for reversible orbits. Note that $\lfloor(n-1)/2\rfloor+2=\lceil n/2\rceil+1$.
\end{proof}

A more user-friendly version of the latter formula is:
\begin{itemize}\item The number of reversible orbits in $\cali_{2k-1}$ is $F_{k+1}$.\item The number of reversible orbits in $\cali_{2k}$ is $F_{k+1}$.\end{itemize}

\section{Sizes of Orbits}\label{sec:orbitsizes}
In the initial data gathering, the authors observed several mysterious patterns in the sizes
of $\varphi$-orbits. For example, almost all of the orbits had size congruent to $1-n\bmod 4$ and certain orbit sizes like 4 and 6 never appeared at all (for any $n$).
These mysteries were also cleared up via the snake compositions of
Section~\ref{sec:togglemain}, which leads to a simple characterization
(Proposition~\ref{prop:orbitsizes}) of the orbit size corresponding to a given composition.  From
this we derive nontrivial consequences for the existence of orbits of a fixed sizes as $n$
varies, summarized for small sizes in Figure~\ref{fig:table}.

The basic idea uses
Theorem~\ref{thm:snakeorbit}: whenever the snake composition $c$ contains a 2, the next snake
starts two positions down, and when a snake composition contains a 1, the next snake starts
three positions down.  So the size of the orbit should be $|c'|$, where $c'$ is obtained
from $c$ by replacing each 1 with a 3.  For example, if $c=221121$, then $c'=223323$ is a
composition of 15, which is the size of the orbit in Example~\ref{board}.


This naive approach can fail, however, in the case where periodicity of $c$ leads one to
create a ``super-orbit'' rather than an orbit, e.g.,
the orbit of $\cali_7$ in Figure~\ref{fig:2121} given by snake composition $c=2121$. Here
$|c'| = 2+3+2+3=10$, but
$S^5=S^0$ so the orbit actually has size 5, and the board repeats itself.  Therefore,
given a snake composition such as 2121 made up entirely of a repeated segment (here
21), we must divide by the number of times the smallest repeated segment repeats itself
(here 2) to get the correct orbit size.

\begin{figure}
\begin{center}\begin{tabular}{c||c|c|c|c|c|c|c}
&\textbf{1}&\textbf{2}&\textbf{3}&\textbf{4}&\textbf{5}&\textbf{6}&\textbf{7}\\\hline\hline\\[-1.111em]
$S^0$&{\cellcolor{red}{\color{white}\textbf{1}}}&\textbf{0}&{\cellcolor{red}{\color{white}\textbf{1}}}&\textbf{0}&\textbf{0}&\textbf{0}&\textbf{0}\\\hline\\[-1.111em]
$S^1$&\textbf{0}&\textbf{0}&\textbf{0}&{\cellcolor{red}{\color{white}\textbf{1}}}&\textbf{0}&{\cellcolor{red}{\color{white}\textbf{1}}}&\textbf{0}\\\hline\\[-1.111em]
$S^2$&{\cellcolor{purple}{\color{white}\textbf{1}}}&\textbf{0}&\textbf{0}&\textbf{0}&\textbf{0}&\textbf{0}&{\cellcolor{red}{\color{white}\textbf{1}}}\\\hline\\[-1.111em]
$S^3$&\textbf{0}&{\cellcolor{purple}{\color{white}\textbf{1}}}&\textbf{0}&{\cellcolor{purple}{\color{white}\textbf{1}}}&\textbf{0}&\textbf{0}&\textbf{0}\\\hline\\[-1.111em]
$S^4$&\textbf{0}&\textbf{0}&\textbf{0}&\textbf{0}&{\cellcolor{purple}{\color{white}\textbf{1}}}&\textbf{0}&{\cellcolor{purple}{\color{white}\textbf{1}}}\\\hline\\[-1.111em]
$S^5$&{\cellcolor{green}{\color{white}\textbf{1}}}&\textbf{0}&{\cellcolor{green}{\color{white}\textbf{1}}}&\textbf{0}&\textbf{0}&\textbf{0}&\textbf{0}\\\hline\\[-1.111em]
$S^6$&\textbf{0}&\textbf{0}&\textbf{0}&{\cellcolor{green}{\color{white}\textbf{1}}}&\textbf{0}&{\cellcolor{green}{\color{white}\textbf{1}}}&\textbf{0}\\\hline\\[-1.111em]
$S^7$&{\cellcolor{blue}{\color{white}\textbf{1}}}&\textbf{0}&\textbf{0}&\textbf{0}&\textbf{0}&\textbf{0}&{\cellcolor{green}{\color{white}\textbf{1}}}\\\hline\\[-1.111em]
$S^8$&\textbf{0}&{\cellcolor{blue}{\color{white}\textbf{1}}}&\textbf{0}&{\cellcolor{blue}{\color{white}\textbf{1}}}&\textbf{0}&\textbf{0}&\textbf{0}\\\hline\\[-1.111em]
$S^9$&\textbf{0}&\textbf{0}&\textbf{0}&\textbf{0}&{\cellcolor{blue}{\color{white}\textbf{1}}}&\textbf{0}&{\cellcolor{blue}{\color{white}\textbf{1}}}
\end{tabular}\end{center}
\caption{}
\label{fig:2121}
\end{figure}

\begin{defn}
Call a composition $c$ \textbf{periodic} if it consists entirely of adjacent copies of the
same repeated substring. Given a composition $c$, let $\psi(c)$ be the number of times the
smallest repeated segment repeats itself to make up $c$. If $\psi (c)=1$, we call $c$
\textbf{aperiodic}.
\end{defn}

\begin{ex}
The composition $21221$ is aperiodic, while $\psi(22122212)=2$ and $\psi(222)=3$.
\end{ex}

\begin{prop}\label{prop:orbitsizes}
Given a $\varphi$-orbit $\eso$ containing snake composition $c$, let $N_1(c)$ be the number of occurrences of 1 in $c$ and $N_2(c)$ be the number of occurrences of 2 in $c$.  Then the size of the orbit $\eso$ is $\frac{3N_1(c)+2N_2(c)}{\psi(c)}$.
\end{prop}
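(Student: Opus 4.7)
The plan is to iterate Theorem~\ref{thm:snakeorbit} in order to track how the snakes of an orbit descend through the orbit board, and to show that the orbit closes up precisely once the sequence of snake compositions returns to $c$.

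First I would choose a representative for $\eso$ whose orbit board has a snake with composition $c$ beginning at row $0$. For $k\ge 0$, let $r_{k}$ denote the starting row of the $(k{+}1)^{\text{st}}$ snake, with $r_{0}=0$, and let $c^{(k)}=\sigma^{k}(c)$ denote its composition, where $\sigma$ is the left cyclic rotation. By Theorem~\ref{thm:snakeorbit}, $r_{k+1}-r_{k}=3$ if $c^{(k)}$ starts with $1$, and $r_{k+1}-r_{k}=2$ if $c^{(k)}$ starts with $2$. Set $p=(N_{1}(c)+N_{2}(c))/\psi(c)$, which is the length of the smallest repeated segment; by definition of $\psi(c)$, this $p$ is the smallest positive integer with $\sigma^{p}(c)=c$.

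Next I would argue that the orbit size equals $r_{p}$. The upper bound is that the snake at row $r_{p}$ has composition $c$ and starts in column~$1$, exactly as the snake at row $0$ does, so the iterative construction of subsequent snakes (and hence of all $1$'s in the orbit board, since the $1$'s are exhausted by the snakes) produces the same pattern from row $r_{p}$ onward as from row $0$ onward; therefore $S^{r_{p}}=S^{0}$. For the lower bound, I would note that every $1$ in column~$1$ of the orbit board is the start of a snake, so if $S^{m}=S^{0}$ for some $0<m<r_{p}$, then $m=r_{j}$ for some $0<j<p$, which forces $\sigma^{j}(c)=c$ and contradicts the minimality of $p$.

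Finally I would compute $r_{p}=\sum_{k=0}^{p-1}(r_{k+1}-r_{k})$. Writing $c=c_{1}c_{2}\cdots c_{\ell}$ with $\ell=N_{1}(c)+N_{2}(c)$, the initial letter of $c^{(k)}$ is $c_{k+1}$, so $r_{p}$ equals $3$ times the number of $1$'s among $c_{1},\ldots,c_{p}$ plus $2$ times the number of $2$'s there. By the definition of $\psi(c)$, the block $c_{1}\cdots c_{p}$ is one copy of the smallest repeated segment, so it contains $N_{1}(c)/\psi(c)$ ones and $N_{2}(c)/\psi(c)$ twos, giving $r_{p}=(3N_{1}(c)+2N_{2}(c))/\psi(c)$. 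The subtlest step is the lower bound: one must justify carefully, from the snake-partition of the orbit board, that any $1$ appearing in column~$1$ must be the start of a snake and hence aligned with some $r_{j}$, so that the orbit cannot have period strictly less than $r_{p}$ without shortening the composition's period.
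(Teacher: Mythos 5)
Your proposal is correct and takes essentially the same approach as the paper, which in fact states Proposition~\ref{prop:orbitsizes} without a formal proof, relying only on the informal discussion that precedes it (iterate Theorem~\ref{thm:snakeorbit} to advance $3$ rows per leading $1$ and $2$ rows per leading $2$, then divide by $\psi(c)$ to avoid counting a ``super-orbit''). Your write-up formalizes exactly that sketch and is more careful than the paper's own treatment, e.g.\ in isolating the fact that every $1$ in column~$1$ begins a snake.
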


Therefore, given any orbit size, we can characterize exactly for which $n$ there is an orbit of $\cali_n$ with that size, and how many such orbits.

\begin{prop}\label{orbit2}
There is an orbit of $\cali_n$ of size 2 exactly when $n$ is odd.  In this case, the orbit is unique.
\end{prop}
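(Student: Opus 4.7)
The plan is to combine Proposition~\ref{prop:orbitsizes} with the bijection of Proposition~\ref{prop:snakebijection}, which identifies $\varphi$-orbits of $\cali_n$ with cyclic equivalence classes of compositions $c$ of $n-1$ into parts $1$ and $2$. Under this bijection I must find all $c$ satisfying
\[
\frac{3 N_1(c) + 2 N_2(c)}{\psi(c)} = 2,
\]
equivalently $3 N_1(c) + 2 N_2(c) = 2 \psi(c)$.

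The key step is to factor out the period: write $c = d^{\psi(c)}$ where $d$ is the (aperiodic) smallest repeated segment, so that $N_i(c) = \psi(c)\, N_i(d)$ for $i = 1, 2$. Dividing the displayed equation through by $\psi(c)$ reduces it to
\[
3 N_1(d) + 2 N_2(d) = 2,
\]
whose only solution in nonnegative integers is $N_1(d) = 0$, $N_2(d) = 1$. Hence $d$ must be the one-part composition $(2)$, and $c$ must have the form $(2, 2, \ldots, 2)$ --- a composition of $n-1 = 2\psi(c)$ consisting entirely of $2$'s.

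Such a $c$ exists precisely when $n-1$ is even, i.e.\ when $n$ is odd; and in that case $c = 2^{(n-1)/2}$ is the unique composition of $n-1$ with all parts equal to $2$, and its cyclic equivalence class is a singleton. Translating back through Proposition~\ref{prop:snakebijection} yields the unique $\varphi$-orbit of size $2$ when $n$ is odd, and none when $n$ is even.

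I do not foresee any serious obstacle; the argument is essentially a direct unpacking of Proposition~\ref{prop:orbitsizes}. The only subtlety is to respect the periodic correction factor $\psi(c)$, which is exactly what allows a composition $c = 2^k$ with $k > 1$ repeated parts to yield an orbit of size $2$ (rather than $2k$); without it, one might miss all such orbits for $n \geq 5$.
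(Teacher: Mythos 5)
Your argument is correct and is essentially the paper's own proof: the paper phrases the step ``$3N_1(d)+2N_2(d)=2$ forces $d=(2)$'' as ``the only composition of $2$ into parts $2$ and $3$ is the composition $2$,'' and then likewise concludes that the snake composition must be $222\cdots2$, which exists exactly when $n-1$ is even. Your version just makes the division by $\psi(c)$ explicit; there is no substantive difference.
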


\begin{proof}
The only composition of 2 into parts 2 and 3 is the composition 2.  By Proposition~\ref{prop:snakebijection},
an orbit has
size 2 if and only if a snake composition corresponding to the orbit is of the form
$222\cdots2$, with $k$ 2s repeated in $\cali_{2k+1}$.
\end{proof}

\begin{ex}\label{ex:orbit-size2}
For $n$ odd, the two independent sets in a $\varphi$-orbit of $\cali_n$ of size 2 are the
empty set and the set $\{1,3,5,\dots,n\}$, as shown in the following orbit board in
$\cali_9$. 

\begin{center}\begin{tabular}{c||c|c|c|c|c|c|c|c|c}
&\textbf{1}&\textbf{2}&\textbf{3}&\textbf{4}&\textbf{5}&\textbf{6}&\textbf{7}&\textbf{8}&\textbf{9}\\\hline\hline\\[-1.111em]
$S^0$ & \textbf{0} & \textbf{0} & \textbf{0} & \textbf{0} & \textbf{0}  & \textbf{0}  & \textbf{0}  & \textbf{0}  & \textbf{0} \\\hline\\[-1.111em]
$S^1$&{\cellcolor{red}{\color{white}\textbf{1}}} & \textbf{0} &{\cellcolor{red}{\color{white}\textbf{1}}} & \textbf{0} &{\cellcolor{red}{\color{white}\textbf{1}}} & \textbf{0} &{\cellcolor{red}{\color{white}\textbf{1}}} & \textbf{0} &{\cellcolor{red}{\color{white}\textbf{1}}}
\end{tabular}\end{center}
\end{ex}

\begin{prop}\label{orbit3}
For any $n\geq2$, there is a unique orbit of $\cali_n$ of size 3.
\end{prop}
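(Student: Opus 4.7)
The plan is to combine Proposition~\ref{prop:snakebijection} and Proposition~\ref{prop:orbitsizes} to reduce the statement to a small Diophantine problem in $N_1(c)$, $N_2(c)$, and $\psi(c)$. By Proposition~\ref{prop:snakebijection}, orbits of $\varphi$ on $\cali_n$ biject with cyclic equivalence classes of compositions of $n-1$ into parts $1$ and $2$, so it suffices to show that exactly one such class $[c]$ satisfies $\frac{3N_1(c)+2N_2(c)}{\psi(c)}=3$.

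First I would rewrite the equation as $3N_1(c)+2N_2(c)=3\psi(c)$. Writing $c=u^{\psi(c)}$ where $u$ is the aperiodic fundamental period, we have $N_i(c)=\psi(c)N_i(u)$ for $i=1,2$, so the condition collapses to $3N_1(u)+2N_2(u)=3$. Because $N_1(u),N_2(u)\in\zz_{\geq0}$, the only nonnegative integer solution is $N_1(u)=1,\,N_2(u)=0$, i.e., $u=(1)$. Hence $c$ must be $(1,1,\ldots,1)$ with $n-1$ ones, which is a single cyclic equivalence class of compositions of $n-1$. This cyclic class exists for every $n\geq 2$ and is obviously unique, establishing existence and uniqueness simultaneously.

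There is no real obstacle: the only thing to be slightly careful about is the boundary case $n=2$, where $c=(1)$ has length $1$ and is aperiodic with $\psi(c)=1$, still giving orbit size $\tfrac{3}{1}=3$; and the periodic case for $n\geq 3$, where $c=1^{n-1}$ has $\psi(c)=n-1$, again yielding orbit size $\tfrac{3(n-1)}{n-1}=3$. I would close by optionally describing the resulting orbit explicitly (using the snake picture, the three sets in the orbit are $S^0=\{1,4,7,\ldots\}\cup\{n\text{ if }n\equiv1\!\!\pmod 3\text{ etc.}\}$, $S^1$, $S^2$ obtained by shifting), but this is not needed for the statement and is best left to an example following the proof.
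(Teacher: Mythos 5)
Your proof is correct and follows essentially the same route as the paper: the paper phrases the key step as ``the only composition of $3$ into parts $2$ and $3$ is the composition $3$,'' which is exactly your Diophantine condition $3N_1(u)+2N_2(u)=3$ on the aperiodic fundamental period $u$, forcing $c=1^{n-1}$. Your version just makes the reduction via $\psi(c)$ slightly more explicit; no gap.
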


\begin{proof}
The only composition of 3 into parts 2 and 3 is the composition 3.  Therefore, a
$\varphi$-orbit has size 3 if and only if the corresponding snake composition has the form
$111\cdots1$, with $k$ ones repeated in $\cali_{k+1}$.
\end{proof}

\begin{ex}
The three independent sets $S^0$, $S^1$, $S^2$ in the orbit of size 3 are the sets of all elements of $[n]$ congruent to $0,1,2\bmod 3$, respectively. An example of the orbit board for this orbit of $\cali_7$ is shown below.
\begin{center}\begin{tabular}{c||c|c|c|c|c|c|c}
&\textbf{1}&\textbf{2}&\textbf{3}&\textbf{4}&\textbf{5}&\textbf{6}&\textbf{7}\\\hline\hline\\[-1.111em]
$S^0$ & \textbf{0} & \textbf{0} & {\cellcolor{red}{\color{white}\textbf{1}}} & \textbf{0} & \textbf{0} & {\cellcolor{red}{\color{white}\textbf{1}}}& \textbf{0}\\\hline\\[-1.111em]
$S^1$ & {\cellcolor{red}{\color{white}\textbf{1}}} & \textbf{0} & \textbf{0} & {\cellcolor{red}{\color{white}\textbf{1}}} & \textbf{0} & \textbf{0} & {\cellcolor{red}{\color{white}\textbf{1}}}\\\hline\\[-1.111em]
$S^2$ & \textbf{0} & {\cellcolor{red}{\color{white}\textbf{1}}} & \textbf{0} & \textbf{0} & {\cellcolor{red}{\color{white}\textbf{1}}} & \textbf{0} & \textbf{0}
\end{tabular}\end{center}
\end{ex}

\begin{prop}\label{orbit4,6}
For every $n$, there are no orbits of $\cali_n$ of size 4 or 6.
\end{prop}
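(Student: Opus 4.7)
The plan is to apply Proposition~\ref{prop:orbitsizes} and reduce the question to a small Diophantine check. If $\eso$ is a $\varphi$-orbit with snake composition $c$, write $c = d^{\psi(c)}$ where $d$ is the smallest repeated segment. Then $d$ is automatically aperiodic (otherwise a strictly shorter segment would generate $c$), and the counts satisfy $N_1(c) = \psi(c)\,n_1(d)$ and $N_2(c) = \psi(c)\,n_2(d)$, where $n_1(d)$ and $n_2(d)$ count the $1$s and $2$s in $d$. By Proposition~\ref{prop:orbitsizes}, the size of $\eso$ is
\[
\frac{3N_1(c)+2N_2(c)}{\psi(c)} = 3n_1(d) + 2n_2(d).
\]
Thus an orbit of size $s$ exists if and only if there is an aperiodic composition $d$ of some positive integer into parts $1$ and $2$ with $3n_1(d)+2n_2(d)=s$.

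First I would treat size $4$: the non-negative integer solutions to $3n_1+2n_2=4$ are exactly $(n_1,n_2)=(0,2)$. The only composition with zero $1$s and two $2$s is $d = 22$, which is periodic (it equals $(2)^2$, with $\psi(d)=2$). Hence no aperiodic $d$ exists, so no orbit of size $4$ is possible for any $n$.

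Next I would treat size $6$: the non-negative solutions to $3n_1+2n_2=6$ are $(n_1,n_2)=(0,3)$ and $(n_1,n_2)=(2,0)$. These force $d=222$ or $d=11$, both of which are periodic (with $\psi=3$ and $\psi=2$ respectively). Again no aperiodic $d$ exists, so no orbit of size $6$ is possible.

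There is no real obstacle here; the only subtlety is justifying that the smallest repeated segment is aperiodic (a one-line argument by minimality) and verifying that the enumeration of non-negative solutions to $3n_1+2n_2\in\{4,6\}$ is exhaustive. Once these are in place, the proposition follows immediately.
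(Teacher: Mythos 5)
Your proposal is correct and follows essentially the same route as the paper: both reduce the question via Proposition~\ref{prop:orbitsizes} to finding an aperiodic repeating block whose weighted length ($3$ per $1$, $2$ per $2$ --- equivalently, the paper's compositions of the orbit size into parts $2$ and $3$) equals $4$ or $6$, and both observe that the only candidates ($22$; $222$ and $11$) are periodic. Your version just makes the reduction and the exhaustiveness of the Diophantine check more explicit than the paper's terser write-up.
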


\begin{proof}
The only composition of 4 into parts 2 and 3 is the composition $2+2$.  However, this composition is periodic and therefore gives an orbit of size 2. Similarly, the only compositions of 6 into parts 2 and 3 are $2+2+2$ and $3+3$, both periodic.  These give the orbits of size 2 and 3 respectively.
\end{proof}

\begin{prop}\label{orbit5}
For $n\geq 2$, there is an orbit of $\cali_n$ of size 5 exactly when $n\equiv1\bmod3$.  In this case, the orbit is unique.
\end{prop}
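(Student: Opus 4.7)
The plan is to mimic the strategy used in Propositions~\ref{orbit2}, \ref{orbit3}, and \ref{orbit4,6}: combine the bijection of Proposition~\ref{prop:snakebijection} with the orbit-size formula of Proposition~\ref{prop:orbitsizes}. By that bijection, a $\varphi$-orbit of $\cali_n$ of size 5 corresponds to a cyclic equivalence class of compositions $c$ of $n-1$ into parts 1 and 2 satisfying
\[
\frac{3N_1(c)+2N_2(c)}{\psi(c)}=5.
\]

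First I would write $c$ as $\psi(c)$ copies of an aperiodic composition $c_0$, so that $N_1(c)=\psi(c)\,N_1(c_0)$ and $N_2(c)=\psi(c)\,N_2(c_0)$. Substituting into the orbit-size formula reduces the problem to classifying aperiodic compositions $c_0$ with parts in $\{1,2\}$ satisfying $3N_1(c_0)+2N_2(c_0)=5$. The only nonnegative integer solution is $(N_1(c_0),N_2(c_0))=(1,1)$, which forces $|c_0|=3$ and $c_0\in\{12,21\}$, both of which are plainly aperiodic. Since $|c|=n-1$ and $|c_0|=3$, a composition $c$ of the required form exists precisely when $3\mid n-1$, i.e., $n\equiv 1\bmod 3$.

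To conclude uniqueness, I would observe that the two candidates $c=1212\cdots12$ and $c=2121\cdots21$ (each consisting of $(n-1)/3$ copies of $12$ or of $21$, respectively) are cyclic rotations of one another, hence represent the same cyclic equivalence class. Therefore they correspond to exactly one $\varphi$-orbit, which has size $5$. The small cases $n=2,3$ produce no solutions to $3N_1(c_0)+2N_2(c_0)=5$ with $|c_0|\leq 2$, so no orbit of size 5 arises there, consistent with the modular condition.

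The argument has no real technical obstacle; the only thing that requires a little care is the observation that both choices $c_0=12$ and $c_0=21$ yield the \emph{same} cyclic class once they are repeated $(n-1)/3$ times, so that uniqueness (rather than two distinct orbits) holds. This is immediate from a one-step cyclic rotation, but it is the step that someone reading the argument would most want spelled out.
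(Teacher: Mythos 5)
Your proposal is correct and takes essentially the same route as the paper: both reduce the problem to finding aperiodic compositions of $5$ into parts $2$ and $3$ (equivalently, solving $3N_1(c_0)+2N_2(c_0)=5$, whose only solution is $(1,1)$), identify the single cyclic class $(12)^k$ as the corresponding snake composition of $n-1=3k$, and conclude $n\equiv 1\bmod 3$ with uniqueness. Your explicit remark that $c_0=12$ and $c_0=21$ yield the same cyclic class is exactly the observation the paper makes by noting $2+3$ and $3+2$ are cyclically equivalent.
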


\begin{proof}
The only compositions of 5 into parts 2 and 3 are $2+3$ and $3+2$, which are cyclically equivalent.  Therefore, an orbit has size 5 if and only if a snake composition corresponding to the orbit is of the form $1212\cdots12$, with $k$ total patterns of 12 repeated (thus a composition of $3k$).  This snake composition is in an orbit of $\cali_{3k+1}$.
\end{proof}

\begin{ex}
The orbit board for the $\varphi$-orbit of $\cali_7$ of size 5 is below.
\begin{center}\begin{tabular}{c||c|c|c|c|c|c|c}
&\textbf{1}&\textbf{2}&\textbf{3}&\textbf{4}&\textbf{5}&\textbf{6}&\textbf{7}\\\hline\hline\\[-1.111em]
$S^0$& {\cellcolor{red}{\color{white}\textbf{1}}} & \textbf{0} & {\cellcolor{red}{\color{white}\textbf{1}}} & \textbf{0}  & \textbf{0}  & \textbf{0}  & \textbf{0} \\\hline\\[-1.111em]
$S^1$ & \textbf{0}  & \textbf{0}  & \textbf{0} & {\cellcolor{red}{\color{white}\textbf{1}}} & \textbf{0} & {\cellcolor{red}{\color{white}\textbf{1}}} & \textbf{0} \\\hline\\[-1.111em]
$S^2$& {\cellcolor{blue}{\color{white}\textbf{1}}} & \textbf{0}  & \textbf{0}  & \textbf{0}  & \textbf{0}  & \textbf{0} & {\cellcolor{red}{\color{white}\textbf{1}}}\\\hline\\[-1.111em]
$S^3$ & \textbf{0} & {\cellcolor{blue}{\color{white}\textbf{1}}} & \textbf{0} & {\cellcolor{blue}{\color{white}\textbf{1}}} & \textbf{0}  & \textbf{0}  & \textbf{0} \\\hline\\[-1.111em]
$S^4$ & \textbf{0}  & \textbf{0}  & \textbf{0}  & \textbf{0} & {\cellcolor{blue}{\color{white}\textbf{1}}} & \textbf{0} & {\cellcolor{blue}{\color{white}\textbf{1}}}
\end{tabular}\end{center}
\end{ex}

\begin{figure}[b]
\begin{center}\begin{tabular}{c|c|c|c}
&Aperiodic cyclically&&$n\geq 2$ for which\\
Orbit&inequivalent compositions &Corresponding snake&$\cali_n$ has an orbit\\
size $m$&of $m$ into parts 2 or 3&composition type&of size $m$\\\hline\hline
2&$2$&$222\cdots2$&$n\equiv1\bmod2$\\\hline
3&$3$&$111\cdots1$&all $n$\\\hline
4&none&none&none\\\hline
5&$3+2$&$1212\cdots12$&$n\equiv1\bmod3$\\\hline
6&none&none&none\\\hline
7&$3+2+2$&$122122\cdots122$&$n\equiv1\bmod5$\\\hline
8&$3+3+2$&$112112\cdots112$&$n\equiv1\bmod4$\\\hline
9&$3+2+2+2$&$12221222\cdots1222$&$n\equiv1\bmod7$\\\hline
10&$3+3+2+2$&$11221122\cdots1122$&$n\equiv1\bmod6$\\\hline
11&$3+3+3+2$&$11121112\cdots1112$&$n\equiv1\bmod5$\\
11&$3+2+2+2+2$&$1222212222\cdots12222$&$n\equiv1\bmod9$\\\hline
12&$3+2+3+2+2$&$1212212122\cdots12122$&$n\equiv1\bmod8$\\
12&$3+3+2+2+2$&$1122211222\cdots11222$&$n\equiv1\bmod8$\\
\end{tabular}\end{center}
\caption{Classification of path graphs which give certain $\varphi$-orbit sizes}
\label{fig:table}
\end{figure}

We can continue this classification for $n\geq 2$ as shown in Figure~\ref{fig:table}. For example $\cali_n$ has an orbit of size 7 if and only if
$n\equiv1\bmod5$, in which case the orbit is unique. An orbit of size
11 exists if and only if $n\equiv1\bmod5$ or $n\equiv1\bmod9$, and this orbit is unique except when
$n$ is both $1\bmod5$ and $1\bmod9$ (i.e., $n\equiv1\bmod45$) in which case there exist two
such orbits.  Also, $\cali_n$ has an orbit of size 12 if and only if $n\equiv1\bmod8$, in
which case there are always exactly two such orbits.

\begin{prop}
For even $n$, the $\varphi$-orbit of $\cali_n$ containing the empty set has size $n+1$.
\end{prop}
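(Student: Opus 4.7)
The plan is to identify the unique snake in the orbit board whose row index is $1$, and then invoke Proposition~\ref{prop:orbitsizes}.

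First I would compute $\varphi(\emptyset)$ directly. Applying $\tau_1,\tau_2,\ldots,\tau_n$ in order to $\emptyset$, the toggle $\tau_1$ adds vertex $1$, $\tau_2$ is then blocked by its neighbor $1$, $\tau_3$ adds vertex $3$, $\tau_4$ is blocked by $3$, and so on. Since $n$ is even, this pattern runs out exactly at $\tau_n$, which is blocked by $n-1$. Therefore
\[
\varphi(\emptyset)=\{1,3,5,\ldots,n-1\},
\]
whose binary representation is $1010\cdots 10$ of length $n$. In the orbit board, the row $S^{1}$ has a $1$ in every odd column and a $0$ in every even column.

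Next I would trace the snake that begins at position $(1,1)$. Since $S(1,1)=S(1,3)=\cdots=S(1,n-1)=1$ all lie in the same row, each step from $(1,2k{-}1)$ to $(1,2k{+}1)$ with $1\le k<n/2$ is a horizontal ``$2$''-step. At the last of these entries, $(1,n{-}1)$, the only admissible continuation under Lemma~\ref{lem:21}(1) is the diagonal ``$1$''-step to $(2,n)$, which is in the rightmost column, so the snake terminates there. Hence the snake composition is
\[
c \;=\; \underbrace{2\,2\,\cdots\,2}_{(n-2)/2\ \text{twos}}\,1,
\]
a composition of $n-1$ with $N_{1}(c)=1$ and $N_{2}(c)=(n-2)/2$. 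This composition is aperiodic: for $n=2$ we have $c=1$, and for $n\ge 4$ the composition ends in $1$ but every other entry is a $2$, so no proper initial segment can be a repeated block. Therefore $\psi(c)=1$.

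Finally, by Proposition~\ref{prop:snakebijection} this snake determines the orbit, and by Proposition~\ref{prop:orbitsizes} its size is
\[
\frac{3\,N_{1}(c)+2\,N_{2}(c)}{\psi(c)} \;=\; 3+2\cdot\frac{n-2}{2} \;=\; n+1,
\]
as claimed. I do not expect any serious obstacle: the only point requiring genuine care is verifying aperiodicity, which is immediate from the shape of $c$; everything else is bookkeeping inside the snake machinery already developed in Section~\ref{sec:togglemain}.
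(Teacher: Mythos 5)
Your proof is correct and takes essentially the same route as the paper: both identify the snake through the row $\varphi(\emptyset)=1010\cdots10$ as having composition $\underbrace{22\cdots2}_{(n-2)/2}1$, note its aperiodicity, and read off the orbit size $2\cdot\frac{n-2}{2}+3=n+1$. Your explicit verification of aperiodicity and of the final diagonal step via Lemma~\ref{lem:21}(1) is slightly more careful than the paper's, but it is the same argument.
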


\begin{proof}
It is easy to see that for even $n$, $\varphi(0000\cdots00)=1010\cdots10$ and
$\varphi^2(0000\cdots00)=0000\cdots01$.  This gives the first three rows of an orbit board,
corresponding to
aperiodic snake composition $\underbrace{22\cdots2}_{(n-2)/2}1$, whose orbit size  is
$2\frac{n-2}{2}+3=n+1$.   See Figure~\ref{fig:orbit6empty} for an example of this orbit when
$n=6$.
\begin{figure}
\begin{center}\begin{tabular}{c||c|c|c|c|c|c}
&\textbf{1}&\textbf{2}&\textbf{3}&\textbf{4}&\textbf{5}&\textbf{6}\\\hline\hline\\[-1.111em]
$S^0$  & \textbf{0}   & \textbf{0}   & \textbf{0}   & \textbf{0}   & \textbf{0}   & \textbf{0} \\\hline\\[-1.111em]
$S^1$& {\cellcolor{red}{\color{white}\textbf{1}}}  & \textbf{0} & {\cellcolor{red}{\color{white}\textbf{1}}}  & \textbf{0} & {\cellcolor{red}{\color{white}\textbf{1}}}  & \textbf{0} \\\hline\\[-1.111em]
$S^2$  & \textbf{0}   & \textbf{0}   & \textbf{0}   & \textbf{0}   & \textbf{0} & {\cellcolor{red}{\color{white}\textbf{1}}}\\\hline\\[-1.111em]
$S^3$& {\cellcolor{green}{\color{white}\textbf{1}}}  & \textbf{0} & {\cellcolor{green}{\color{white}\textbf{1}}}  & \textbf{0}   & \textbf{0}   & \textbf{0} \\\hline\\[-1.111em]
$S^4$  & \textbf{0}   & \textbf{0}   & \textbf{0} & {\cellcolor{green}{\color{white}\textbf{1}}}  & \textbf{0} & {\cellcolor{green}{\color{white}\textbf{1}}}\\\hline\\[-1.111em]
$S^5$& {\cellcolor{blue}{\color{white}\textbf{1}}} & \textbf{0}   & \textbf{0}   & \textbf{0}   & \textbf{0}   & \textbf{0} \\\hline\\[-1.111em]
$S^6$  & \textbf{0} & {\cellcolor{blue}{\color{white}\textbf{1}}}  & \textbf{0} & {\cellcolor{blue}{\color{white}\textbf{1}}}  & \textbf{0} & {\cellcolor{blue}{\color{white}\textbf{1}}}
\end{tabular}\end{center}
\caption{The $\varphi$-orbit of $\cali_6$ contaning $\O$.}
\label{fig:orbit6empty}
\end{figure}
\end{proof}

\begin{thm}\label{thm:mod4}
Let $\eso$ be an $\varphi$-orbit of $\cali_n$ and $c$ be a snake composition that appears in $\eso$.  If $c$ is aperiodic, then the size of $\eso$ is congruent to $1-n\bmod4$. Furthermore, even when $c$ is periodic, the size of $\eso$ divides an integer $m\equiv1-n\bmod4$ for $m\leq 3(n-1)$ (where $m$ depends on the orbit $\eso$).
\end{thm}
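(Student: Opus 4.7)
My plan is to derive everything directly from Proposition~\ref{prop:orbitsizes}, which gives
\[
|\eso| \;=\; \frac{3N_1(c) + 2N_2(c)}{\psi(c)}.
\]
The first observation I would make is that since $c$ is a composition of $n-1$ into parts $1$ and $2$, we have $N_1(c) + 2N_2(c) = n-1$, and hence
\[
3N_1(c) + 2N_2(c) \;=\; (n-1) + 2N_1(c).
\]
Thus $\psi(c)\cdot|\eso| = (n-1) + 2N_1(c)$; call this quantity $m$. This is the ``naive'' orbit size one would guess by ignoring periodicity, and it is the candidate $m$ in the theorem statement. Clearly $|\eso|$ divides $m$ by construction, and $m = (n-1) + 2N_1(c) \leq (n-1) + 2(n-1) = 3(n-1)$, since $N_1(c) \leq n-1$.

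The main arithmetic step is to check that $m \equiv 1-n \pmod 4$. From $N_1(c) + 2N_2(c) = n-1$ I get the parity condition $N_1(c) \equiv n-1 \pmod 2$, so I would split into two cases: if $n$ is odd then $N_1(c)$ is even, so $2N_1(c) \equiv 0 \pmod 4$ and hence $m \equiv n-1 \pmod 4$; and since $n$ is odd, $n-1 - (1-n) = 2(n-1)$ is divisible by $4$, giving $m \equiv 1-n \pmod 4$. If $n$ is even then $N_1(c)$ is odd, so $2N_1(c) \equiv 2 \pmod 4$ and $m \equiv n+1 \pmod 4$; and since $n$ is even, $n+1 - (1-n) = 2n$ is divisible by $4$. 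Either way, $m \equiv 1-n \pmod 4$.

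In the aperiodic case, $\psi(c) = 1$, so $|\eso| = m$ itself, yielding the first assertion of the theorem. In the periodic case, $m = \psi(c)|\eso|$ is a multiple of $|\eso|$ that satisfies both $m \leq 3(n-1)$ and $m \equiv 1-n \pmod 4$, giving the second assertion. There is no real obstacle here beyond the bookkeeping: once one rewrites $3N_1 + 2N_2$ as $(n-1) + 2N_1$, the parity control on $N_1$ forces the mod $4$ congruence automatically, and choosing $m$ to be the numerator appearing in Proposition~\ref{prop:orbitsizes} before dividing by $\psi(c)$ handles both the divisibility and the size bound at once.
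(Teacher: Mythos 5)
Your proposal is correct and follows essentially the same route as the paper: both arguments rest entirely on Proposition~\ref{prop:orbitsizes}, take $m = 3N_1(c)+2N_2(c) = \psi(c)\,\#\eso$ as the relevant multiple of the orbit size, and verify $m\equiv 1-n \pmod 4$ together with $m\le 3(n-1)$. The only cosmetic difference is that the paper checks the congruence by starting from the all-ones composition and noting that each replacement of $11$ by $2$ drops the sum by $4$, whereas you rewrite the sum as $(n-1)+2N_1(c)$ and use the parity of $N_1(c)$; both are immediate.
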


\begin{proof}
Using the notation of Proposition~\ref{prop:orbitsizes}, the size of $\eso$ is
$\frac{3N_1(c)+2N_2(c)}{\psi(c)}$.  If $c=\underbrace{111\cdots1}_{n-1}$, then
$$3N_1(c)+2N_2(c)=3(n-1)\equiv1-n\bmod4.$$
Any other composition of $n-1$ into parts 1
or 2 can be formed starting with $111\cdots1$ and replacing strings of 11 with 2.  Each
time a 11 in a snake composition is changed to a 2, the sum $3N_1(c)+2N_2(c)$ is decreased
by 4.  Thus, $3N_1(c)+2N_2(c)\equiv 1-n\bmod4$ and is at most $3(n-1)$.  Thus, the size of
$\eso$ divides $3N_1(c)+2N_2(c)$ and when $c$ is aperiodic, the orbit size is given by
$3N_1(c)+2N_2(c)\equiv 1-n\bmod4$.
\end{proof}

\begin{cor}\label{cor:mod4}
For even $n$, every $\varphi$-orbit of $\cali_n$ has odd size.  Furthermore, when $n\equiv3\bmod4$, there exist no orbits with size divisible by 4.
\end{cor}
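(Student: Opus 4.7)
The plan is to apply Theorem~\ref{thm:mod4} and do a short case analysis modulo~$4$. Given any $\varphi$-orbit $\eso$ of $\cali_n$, pick any snake composition $c$ appearing in $\eso$; Theorem~\ref{thm:mod4} provides a positive integer $m = 3N_1(c) + 2N_2(c)$ with $m \equiv 1-n \pmod{4}$ such that the size of $\eso$ divides $m$. So everything reduces to understanding the residue of $m$ modulo~$4$ in the two cases, and then observing that divisibility by $2$ or $4$ is inherited (or not) by divisors.

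For the first statement, suppose $n$ is even. Then $1-n$ is odd, so $m$ is odd, and hence every divisor of $m$ is odd. In particular, $\#\eso$ is odd.

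For the second statement, suppose $n \equiv 3 \pmod{4}$. Then $1-n \equiv -2 \equiv 2 \pmod 4$, so $m \equiv 2 \pmod 4$. Write $m = 2k$ with $k$ odd. Any divisor $d$ of $m = 2k$ is either a divisor of $k$ (in which case $d$ is odd) or of the form $2d'$ for some divisor $d'$ of $k$ (in which case $d \equiv 2 \pmod 4$ since $d'$ is odd). In neither case is $d$ divisible by $4$, so $\#\eso$ is not divisible by $4$.

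There is no genuine obstacle here; the entire content lies in Theorem~\ref{thm:mod4}, which has already been established. The corollary simply packages the mod-$4$ consequence in the two parity regimes of $n$ where the conclusion is strongest.
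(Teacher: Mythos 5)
Your proposal is correct and follows essentially the same route as the paper: both reduce the claim to Theorem~\ref{thm:mod4}, note that $m\equiv 1-n\pmod 4$ is odd when $n$ is even and is $\equiv 2\pmod 4$ when $n\equiv 3\pmod 4$, and conclude via divisibility. The only difference is that you spell out the elementary observation that a divisor of an integer $\equiv 2\pmod 4$ cannot be divisible by $4$, which the paper leaves implicit.
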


\begin{proof}
Theorem~\ref{thm:mod4} tells us that the size of any orbit $\eso$ divides an integer
$m\equiv1-n\bmod4$, which is odd for $n$ even.
Meanwhile, $n\equiv 3 \bmod 4$ forces $m \equiv 2 \bmod 4$, which $\#\eso$ must divide.
\end{proof}

\begin{cor}
When $n$ is even, any reversible $\varphi$-orbit of $\cali_n$ contains exactly one symmetrical independent set.
\end{cor}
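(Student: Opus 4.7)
The plan is to combine the parity constraint from Corollary~\ref{cor:mod4} with a simple involution-counting argument, and then invoke the bound from Proposition~\ref{prop:atmost2symm} to pin down the count exactly.

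First, I would fix an even $n$ and a reversible $\varphi$-orbit $\eso \subseteq \cali_n$. By Corollary~\ref{cor:mod4}, the size $\#\eso$ is odd. Next, I would consider the reversal map $R : \eso \to \eso$ defined by $R(T) = T^{\rev}$. Because $\eso$ is reversible and $R$ is an involution on $\cali_n$ (by Proposition~\ref{reversal}, which gives $(T^{\rev})^{\rev} = T$ together with $\varphi(T^{\rev}) = (\varphi^{-1}(T))^{\rev}$), the map $R$ restricts to an involution on $\eso$: once one element's reverse lies in $\eso$, all of them do.

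The key observation is that a set-theoretic involution on a set of odd cardinality must have at least one fixed point, since nonfixed points are paired up. A fixed point of $R$ is precisely a symmetrical independent set in $\eso$. Hence $\eso$ contains at least one symmetrical independent set.

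For the upper bound, I would appeal to Proposition~\ref{prop:atmost2symm}: if $\eso$ contained two distinct symmetrical sets, then its cardinality would equal $2m$ for the minimal $m$ with $\varphi^m(S) = T$, and in particular would be even. This contradicts Corollary~\ref{cor:mod4}. Therefore $\eso$ contains at most one, and combining the two bounds gives exactly one. The only subtle step is confirming that $R$ indeed preserves $\eso$ as a whole, but this follows immediately from Proposition~\ref{reversal} once we know $\eso$ contains the reverse of some (and hence every) one of its elements.
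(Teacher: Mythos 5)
Your proof is correct and follows essentially the same route as the paper: both use Corollary~\ref{cor:mod4} to get odd orbit size, pair elements with their reverses via the involution $T\mapsto T^{\rev}$, and invoke Proposition~\ref{prop:atmost2symm} to cap the count. The only cosmetic difference is in closing the gap at the end --- the paper notes the number of symmetrical sets is odd and at most two, hence one, while you derive the upper bound of one directly from the fact that two symmetrical sets would force even orbit size; both are valid.
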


\begin{proof}
For a reversible orbit $\eso$ and any $S\in\eso$, $S^{\rev}$ is also in $\eso$.  This partitions the independent sets in $\eso$ into pairs, with the only unpaired ones being the sets
that satisfy $S=S^{\rev}$, i.e., the symmetrical independent sets.  By
Corollary~\ref{cor:mod4}, $\eso$ has odd size so there must be an odd number of symmetrical
independent sets in $\eso$.  By Proposition~\ref{prop:atmost2symm}, an orbit can contain at
most two symmetrical independent sets, so $\eso$ contains exactly one symmetrical
independent set.
\end{proof}

\section{Connections with Order Ideals in Zigzag Posets}\label{sec:zigzagposets}

The original problem about independent sets is connected with other well-studied maps,
called \emph{promotion} and \emph{rowmotion}, on zigzag posets.  Rowmotion was introduced as
a map on antichains of a poset in~\cite{brouwer1974period}, while promotion was originally
formulated by Sch\"utzenberger in the context of standard Young
tableaux~\cite{Sch63}. Promotion and rowmotion have both been studied in various settings by
numerous authors. Here, we consider them as maps on order ideals, as discussed by
Striker and Williams~\cite{strikerwilliams}.

Our main result is an equivariant bijection between the $\varphi$-action on $\cali_{n}$ and
the rowmotion action on order ideals of a zigzag poset.  This allows us to carry over results
from the previous sections to the new context, which we now describe.

A \textbf{partially ordered set} (or \textbf{poset} for short) is a set $P$ together with a
binary relation $\leq$ that is reflexive, antisymmetric, and transitive. (A reader
unfamiliar with posets can find the necessary background in Stanley's text
~\cite[Ch.~3]{ec1ed2}.) An \textbf{order ideal} of a poset $P$ is a subset $I$ of $P$ such that if $x\in I$ and
$y\leq x$ in $P$, then $y\in I$.  The set of order ideals of $P$ is denoted $J(P)$.

\begin{definition}
The \textbf{zigzag poset} with $n$ elements, denoted $\calz_n$, is the poset consisting of
elements $a_1,...,a_n$ and relations $a_{2i-1} < a_{2i}$ and $a_{2i+1} < a_{2i}$.
(Such posets are also called \textbf{fence posets} and are discussed in \cite[p. 367]{ec1ed2}.)
\end{definition}

The zigzag posets have Hasse diagrams that can be drawn in a zigzag formation, hence the name. For example
\begin{center}
\begin{tikzpicture}
\draw[thick] (0,0) -- (1,1) -- (2,0) -- (3,1) -- (4,0) -- (5,1);
\draw[fill] (0,0) circle [radius=0.07];
\draw[fill] (1,1) circle [radius=0.07];
\draw[fill] (2,0) circle [radius=0.07];
\draw[fill] (3,1) circle [radius=0.07];
\draw[fill] (4,0) circle [radius=0.07];
\draw[fill] (5,1) circle [radius=0.07];
\node[below] at (0,0) {$a_1$};
\node[above] at (1,1) {$a_2$};
\node[below] at (2,0) {$a_3$};
\node[above] at (3,1) {$a_4$};
\node[below] at (4,0) {$a_5$};
\node[above] at (5,1) {$a_6$};
\node[left] at (-0.2,0.5) {$\calz_6=$};
\node at (6,0.5) {and};
\end{tikzpicture}
\begin{tikzpicture}
\draw[thick] (0,0) -- (1,1) -- (2,0) -- (3,1) -- (4,0) -- (5,1) -- (6,0);
\draw[fill] (0,0) circle [radius=0.07];
\draw[fill] (1,1) circle [radius=0.07];
\draw[fill] (2,0) circle [radius=0.07];
\draw[fill] (3,1) circle [radius=0.07];
\draw[fill] (4,0) circle [radius=0.07];
\draw[fill] (5,1) circle [radius=0.07];
\draw[fill] (6,0) circle [radius=0.07];
\node[below] at (0,0) {$a_1$};
\node[above] at (1,1) {$a_2$};
\node[below] at (2,0) {$a_3$};
\node[above] at (3,1) {$a_4$};
\node[below] at (4,0) {$a_5$};
\node[above] at (5,1) {$a_6$};
\node[below] at (6,0) {$a_7$};
\node[left] at (-0.2,0.5) {$\calz_7=$};
\node[left] at (6.5,0) {.};
\end{tikzpicture}
\end{center}

The toggle group on $J(\calz_n)$ is defined analogously to that of $\cali_n$.  The main
conceptual difference is that while an element can always be toggled out of an independent
set, the same is not true for all elements in an order ideal of $P$.

\begin{defn}The \textbf{toggle} $t_i:J(\calz_n)\ra J(\calz_n)$ is defined
as $$t_i(I):=\left\{\begin{array}{ll}I\cup\{a_i\}&\text{if }a_i\not\in I
\text{ and }
I\cup\{a_i\} \in
J(\calz_n)\\
I\sm\{a_i\}&\text{if }a_i\in I
\text{ and }
I\sm\{a_i\} \in
J(\calz_n)\\
I&\text{otherwise}
\end{array}\right.$$ The \textbf{toggle group} of $J(\calz_n)$, denoted $\tog(\calz_n)$, is
generated by the toggles $t_i$ for $i\in[n]$.
\end{defn}

As with toggles on $\cali_n$, it is clear that $t_i^2=1$ for any $i\in [n]$.  The following is an analogue of Proposition~\ref{prop:togglescommute}.

\begin{prop}\label{zigzagcommute}
Two toggles $t_i,t_j\in\tog(\calz_n)$ commute if and only if $|i-j|\not=1$.
\end{prop}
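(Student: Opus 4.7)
The plan is to mirror the proof of Proposition~\ref{prop:togglescommute} with three cases. The case $i = j$ is immediate. For $|i-j| > 1$, the key observation is that the cover relations of $\calz_n$ only involve pairs of consecutively-indexed elements, so $a_i$ and $a_j$ are incomparable in $\calz_n$. Whether $t_i$ can insert or delete $a_i$ in an order ideal $I$ depends only on the membership in $I$ of $a_i$ itself and of its covers, which are among $\{a_{i-1}, a_{i+1}\}$. Since $|i-j|>1$ means $a_j \notin \{a_i, a_{i-1}, a_{i+1}\}$, the toggle $t_j$ does not alter any of the membership data that $t_i$ consults, and symmetrically for $t_i$ on $t_j$. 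Hence $t_i t_j = t_j t_i$ on every order ideal.

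For $|i-j| = 1$, we may take $j = i+1$ (since $t_i t_{i+1}$ and $t_{i+1} t_i$ are inverses and thus have the same order). Here $a_i$ and $a_{i+1}$ are a covering pair, with orientation determined by parity: $a_{i+1}$ covers $a_i$ if $i$ is odd, while $a_i$ covers $a_{i+1}$ if $i$ is even. I would exhibit a single small order ideal $I \in J(\calz_n)$ on which the two compositions disagree. For instance, when $i$ is odd and $i+2 \le n$, take $I = \{a_{i+2}\}$: then $t_{i+1}(I) = I$ because adding $a_{i+1}$ would require its other lower cover $a_i$ to lie in $I$, while $t_i(I) = \{a_i, a_{i+2}\}$, and one computes
\[
t_{i+1} t_i(I) = \{a_i, a_{i+1}, a_{i+2}\} \neq \{a_i, a_{i+2}\} = t_i t_{i+1}(I).
\]
Symmetric witnesses dispose of the remaining subcases: $I = \emptyset$ when $i$ is odd with $i+1 = n$, and $I = \{a_{i-1}\}$ (with a completely analogous computation) when $i$ is even.

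The only real care in this argument is keeping straight whether $a_i$ is a minimum or a maximum of $\calz_n$ (governed by the parity of $i$) and which covers exist near the boundary of the poset; this determines which inclusions in $I$ are prerequisites for each toggle to act. Once this bookkeeping is set up, each witness reduces to a short finite check, so I do not anticipate any deeper obstacle beyond the routine case-work on parity and the two edge cases at $i = 1$ and $i = n-1$.
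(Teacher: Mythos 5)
Your proof is correct, and for the direction the paper actually writes out ($|i-j|>1$ implies commutation) it uses the same idea: $a_i$ and $a_j$ are incomparable, and each toggle only consults membership of the element itself and its neighbors, none of which the other toggle touches. The difference is one of completeness rather than of route: the paper's proof consists of a single sentence covering only the $|i-j|>1$ case, leaving both the trivial $i=j$ case and, more notably, the entire ``only if'' direction ($|i-j|=1$ implies non-commutation) unproved. You supply that converse with explicit witnesses --- $I=\{a_{i+2}\}$ for $i$ odd, $I=\emptyset$ at the right boundary, $I=\{a_{i-1}\}$ for $i$ even --- and I checked that each of these computations is right, including the parity bookkeeping about which of $a_i,a_{i+1}$ is maximal. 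So your write-up is strictly more complete than the paper's; the paper presumably regards the non-commutation as following by analogy with its Proposition~\ref{prop:togglescommute} for $\cali_n$ (or via the equivariant bijection $\eta$ of Proposition~\ref{correspond}, which conjugates $t_i$ to $\tau_i$ and would transport that earlier result directly), but it never says so.
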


\begin{proof}
When $|i-j|>1$, $a_i$ and $a_j$ are incomparable (i.e., neither $a_i\leq a_j$ nor $a_j\leq a_i$) so inclusion of one of $a_i$ or $a_j$ cannot affect whether the other can be included.
\end{proof}

Two special elements of $\tog(\calz_n)$ are \textbf{promotion} $\pro=t_n\cdots t_2 t_1$ and
\textbf{rowmotion} $$\row=\left\{\begin{array}{ll}t_{n-1}t_{n-3}\cdots
t_3t_1t_nt_{n-2}\cdots t_2&\text{if }n\text{ is even}\\t_{n}t_{n-2}\cdots
t_3t_1t_{n-1}t_{n-3}\cdots t_2&\text{if }n\text{ is odd}\end{array}\right..$$

These maps have been studied on general posets by many authors. Rowmotion can be defined
for any poset, and promotion can be defined for any poset which can be embedded as a
rowed-and-columned poset (such as $\calz_n$)~\cite{strikerwilliams}.

Next we define the equivariant bijection $\eta$ that takes us between independent sets of
$\calp_n$ and order ideals of $\calz_n$.

\begin{prop}
The map $\eta:\cali_n\rightarrow J(\calz_n)$ defined below is a bijection:
$$\eta(S):=\{a_i \;|\; i\in[n], i \text{ odd, } i\not\in S\} \cup \{a_i \;|\; i\in[n], i \text { even, } i\in S\}.$$	
\end{prop}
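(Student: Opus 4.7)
The plan is to verify bijectivity by constructing an explicit inverse and checking that both directions are well-defined. The key combinatorial observation is that the covering relations of $\calz_n$ are precisely $a_{2i-1} < a_{2i}$ and $a_{2i+1} < a_{2i}$, so the only constraint for a subset $I\subseteq\{a_1,\dots,a_n\}$ to be an order ideal is that whenever $a_{2i}\in I$, both $a_{2i-1}$ and $a_{2i+1}$ lie in $I$ as well. Meanwhile, $S\in\cali_n$ means that adjacent vertices of $\calp_n$ are never both in $S$. Flipping membership only at odd indices exchanges these two conditions.

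First I would check that $\eta(S)\in J(\calz_n)$ for every $S\in\cali_n$. Suppose $a_{2i}\in\eta(S)$; by definition this means $2i\in S$. Since $S$ is independent, neither $2i-1$ nor $2i+1$ can lie in $S$, and since $2i-1$ and $2i+1$ are odd, this is exactly the condition that $a_{2i-1},a_{2i+1}\in\eta(S)$. Thus the order-ideal condition at every maximal element $a_{2i}$ is satisfied (and there is nothing to check at the minimal elements $a_{2i\pm 1}$), so $\eta(S)\in J(\calz_n)$.

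Next I would define the candidate inverse $\eta':J(\calz_n)\to\cali_n$ by the symmetric formula
$$\eta'(I):=\{i\in[n]\,:\,i\text{ odd, }a_i\notin I\}\cup\{i\in[n]\,:\,i\text{ even, }a_i\in I\},$$
and verify that $\eta'(I)\in\cali_n$. Suppose for contradiction that $i,i+1\in\eta'(I)$ for some $i$. One of $i,i+1$ is odd and one is even. If $i$ is odd and $i+1$ is even, then $a_i\notin I$ and $a_{i+1}\in I$; but $a_i<a_{i+1}$ in $\calz_n$, so the order-ideal property forces $a_i\in I$, a contradiction. The case $i$ even, $i+1$ odd is analogous, using the cover $a_{i+1}<a_i$. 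Hence $\eta'(I)$ is independent.

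Finally, since both $\eta$ and $\eta'$ are defined by the same rule ``complement membership at odd indices, keep membership at even indices,'' the compositions $\eta'\circ\eta$ and $\eta\circ\eta'$ are the identities on $\cali_n$ and $J(\calz_n)$ respectively by an index-by-index check. Therefore $\eta$ is a bijection with inverse $\eta'$. There is no real obstacle here; the argument is entirely a matter of matching the independent-set condition on $\calp_n$ with the order-ideal condition on $\calz_n$ via the parity-dependent complementation, and the main thing to be careful about is not to forget that there is nothing to verify for order ideals at the minimal elements of $\calz_n$.
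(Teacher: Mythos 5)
Your proof is correct and follows essentially the same route as the paper's: verify that the order-ideal condition at each maximal element $a_{2i}$ translates into the independence condition on $S$, and exhibit the parity-complementing formula as a two-sided inverse. You are in fact slightly more thorough than the paper, which dispatches the check that $\eta^{-1}(I)$ is independent with the phrase ``by analogous reasoning,'' whereas you spell out the case analysis on which of $i,i+1$ is odd.
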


\begin{proof}
Note that for $I\subseteq\calz_n$ to be an order ideal, it must satisfy the property that whenever $a_{2j}$ is in $I$, so are $a_{2j-1}$ and $a_{2j+1}$ (or just $a_{2j-1}$ if $2j=n$).

Suppose $a_{2j}\in \eta(S)$. Then $2j\in S$. Since $S$ is independent, $2j-1$ and $2j+1$ are not in $S$. So $a_{2j-1}\in\eta(S)$ and (when $2j\not=n$) $a_{2j+1}\in\eta(S)$. Hence $\eta(S)\in J(\calz_n)$.  The inverse of $\eta$ is given by
$$\eta^{-1}(I)=\{i\;|\; i\in[n], i \text{ odd, } a_i\not\in I\} \cup \{i\;|\; i\in[n], i \text{ even, } a_i\in I\}$$ which always produces an independent set for $I\in J(\calz_n)$ by analogous reasoning. Thus $\eta$ is a bijection.
\end{proof}

\begin{example} Let $n=7$ and $S=1001010=\{1,4,6\}$. Then $a_1\not\in\eta(S)$ and $a_3,a_5,a_7\in\eta(S)$ because $1\in S$ and $3,5,7\not\in S$.  Also, $a_2\not\in\eta(S)$ and $a_4,a_6\in\eta(S)$, since $2\not\in S$ and $4,6\in S$.  This correspondence is shown below, where the hollow circles are included in $\calz_7$ but not the order ideal $\eta(S)$.
\begin{center}
\begin{tikzpicture}
\draw[thick] (0.045,0.045) -- (0.955,0.955);
\draw[thick] (1.045, 0.955) -- (2,0) -- (3,1) -- (4,0) -- (5,1) -- (6,0);
\draw (0,0) circle [radius=0.07];
\draw (1,1) circle [radius=0.07];
\draw[fill] (2,0) circle [radius=0.07];
\draw[fill] (3,1) circle [radius=0.07];
\draw[fill] (4,0) circle [radius=0.07];
\draw[fill] (5,1) circle [radius=0.07];
\draw[fill] (6,0) circle [radius=0.07];
\node[below] at (0,0) {$a_1$};
\node[above] at (1,1) {$a_2$};
\node[below] at (2,0) {$a_3$};
\node[above] at (3,1) {$a_4$};
\node[below] at (4,0) {$a_5$};
\node[above] at (5,1) {$a_6$};
\node[below] at (6,0) {$a_7$};
\node[above] at (-1.1,0.567) {$\eta$};
\node at (-1.1,0.5) {$\longmapsto$};
\node[left] at (-2,0.5) {1001010};
\end{tikzpicture}
\end{center}
\end{example}


\begin{prop}\label{correspond}
For every $i\in[n]$, $\eta\circ\tau_i=t_i\circ\eta$.  Thus,
$\eta\circ\varphi=\pro\circ\eta$, making $\eta$ an \textbf{equivariant} bijection, as shown in the
following commutative diagrams.
\begin{center}
\begin{tikzpicture}
\begin{scope}
\node at (0,1.8) {$\cali_n$};
\node at (0,0) {$\cali_n$};
\node at (3.25,1.8) {$J(\calz_n)$};
\node at (3.25,0) {$J(\calz_n)$};
\draw[semithick, ->] (0,1.3) -- (0,0.5);
\node[left] at (0,0.9) {$\tau_i$};
\draw[semithick, ->] (0.5,0) -- (2.5,0);
\node[below] at (1.5,0) {$\eta$};
\draw[semithick, ->] (0.5,1.8) -- (2.5,1.8);
\node[above] at (1.5,1.8) {$\eta$};
\draw[semithick, ->] (3.25,1.3) -- (3.25,0.5);
\node[right] at (3.25,0.9) {$t_i$};
\end{scope}
\begin{scope}[shift={(6.3,0)}]
\node at (0,1.8) {$\cali_n$};
\node at (0,0) {$\cali_n$};
\node at (3.25,1.8) {$J(\calz_n)$};
\node at (3.25,0) {$J(\calz_n)$};
\draw[semithick, ->] (0,1.3) -- (0,0.5);
\node[left] at (0,0.9) {$\varphi$};
\draw[semithick, ->] (0.5,0) -- (2.5,0);
\node[below] at (1.5,0) {$\eta$};
\draw[semithick, ->] (0.5,1.8) -- (2.5,1.8);
\node[above] at (1.5,1.8) {$\eta$};
\draw[semithick, ->] (3.25,1.3) -- (3.25,0.5);
\node[right] at (3.25,0.9) {$\pro$};
\end{scope}
\end{tikzpicture}
\end{center}
\end{prop}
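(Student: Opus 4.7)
My plan is to establish the toggle-by-toggle identity $\eta\circ\tau_i=t_i\circ\eta$ first, and then derive the promotion statement immediately by iterating:
\[
\eta\circ\varphi
=\eta\circ\tau_n\cdots\tau_2\tau_1
=t_n\circ\eta\circ\tau_{n-1}\cdots\tau_1
=\cdots
=t_n\cdots t_2 t_1\circ\eta
=\pro\circ\eta.
\]
So everything reduces to the single-toggle identity, which I would verify by a case analysis on the parity of $i$ and on whether $i\in S$. The conceptual point to isolate up front is that in $\calz_n$, odd-indexed elements are minimal and even-indexed elements are maximal; the zigzag structure means that the only covers of $a_i$ (for $i$ odd) are $a_{i-1}$ and $a_{i+1}$ (when they exist), and dually the only elements covered by $a_i$ (for $i$ even) are $a_{i-1}$ and $a_{i+1}$. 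I would also record the translation: for odd $j$, $a_j\in\eta(S)\iff j\notin S$, whereas for even $j$, $a_j\in\eta(S)\iff j\in S$.

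For $i$ odd, $t_i$ can always \emph{insert} $a_i$ into an ideal (since $a_i$ is minimal) and can \emph{delete} $a_i$ precisely when neither $a_{i-1}$ nor $a_{i+1}$ lies in the ideal. Comparing with $\tau_i$: if $i\in S$, then independence forces $i\pm 1\notin S$, so under $\eta$ we have $a_i\notin\eta(S)$ with $a_{i\pm1}\notin\eta(S)$, so both sides insert $a_i$; if $i\notin S$ and $i\pm1\notin S$ (so $\tau_i$ inserts $i$), then $a_i\in\eta(S)$ with $a_{i\pm1}\notin\eta(S)$, so both sides delete $a_i$; and if $i\notin S$ with some neighbor in $S$ (so $\tau_i$ fixes $S$), then $a_i\in\eta(S)$ with at least one of $a_{i\pm1}\in\eta(S)$, so $t_i$ also fixes $\eta(S)$. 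The case $i$ even is dual: $t_i$ can always delete $a_i$ and can insert $a_i$ precisely when $a_{i-1},a_{i+1}$ (if they exist) both lie in the ideal, and the same three sub-cases match up under the parity-switching convention of $\eta$. In each of the six combined sub-cases the change (or non-change) applied by $\tau_i$ to the bit at position $i$ in $S$ exactly mirrors the change applied by $t_i$ to membership of $a_i$ in $\eta(S)$, while positions $j\neq i$ are untouched by both maps.

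The only mild obstacle is bookkeeping the boundary cases $i=1$ and $i=n$, where one of $a_{i-1},a_{i+1}$ is absent; but in each such case the missing cover simply drops out of the condition, with independence in $\cali_n$ and order-ideal membership in $J(\calz_n)$ likewise not imposing a constraint for the missing index, so the analysis above goes through unchanged. Once $\eta\circ\tau_i=t_i\circ\eta$ is established for all $i$, both commutative diagrams in the proposition follow, the first by definition and the second by the telescoping displayed at the start.
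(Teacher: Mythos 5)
Your proof is correct: the parity-aware case analysis of the single-toggle identity $\eta\circ\tau_i=t_i\circ\eta$ (minimal odd elements versus maximal even elements, three sub-cases each, boundary indices checked) is exactly the argument the statement requires, and the telescoping step to $\eta\circ\varphi=\pro\circ\eta$ is immediate. The paper itself offers no written proof of this proposition, treating it as evident from the definitions of $\eta$, $\tau_i$, and $t_i$; your write-up supplies precisely the routine verification the authors left implicit, so there is nothing to compare beyond noting that you have filled in the omitted details correctly.
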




As with $\calt_n$, $\tog(\calz_n)$ is the quotient of a Coxeter group, and it is clearly isomorphic to $\calt_n$ via Proposition~\ref{correspond}. We define Coxeter elements analogously to $\calt_n$, and $\pro$ and $\row$ are two examples of Coxeter elements in $\tog(\calz_n)$.

\begin{defn}
A \textbf{Coxeter element} in $\tog(\calz_n)$ is a product of each of the $n$ toggles $t_1,t_2,\dots,t_n$ each used exactly once, in some order.
\end{defn}

\begin{theorem}Any two Coxeter elements in $\tog(\calz_n)$ are conjugate.  In particular, promotion and rowmotion are conjugate.\end{theorem}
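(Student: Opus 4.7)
The plan is to deduce this theorem directly from the corresponding fact for $\calt_n$, using the equivariant bijection $\eta:\cali_n\to J(\calz_n)$ of Proposition~\ref{correspond}. Since $\eta$ is a bijection on the underlying sets and satisfies $\eta\circ\tau_i=t_i\circ\eta$ for every $i\in[n]$, conjugation by $\eta$ induces a group isomorphism $\Phi:\calt_n\to\tog(\calz_n)$ sending $\tau_i\mapsto t_i$ for each $i$. The only mildly non-automatic point is that $\Phi$ is well-defined as a group homomorphism, which is immediate: any relation $w(\tau_1,\dots,\tau_n)=1$ in $\calt_n$ transports under $\eta$ to the identical relation $w(t_1,\dots,t_n)=1$ in $\tog(\calz_n)$, since both groups act faithfully on the respective sets intertwined by $\eta$.

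Under this isomorphism, Coxeter elements correspond to Coxeter elements: a product $\tau_{\sigma(1)}\tau_{\sigma(2)}\cdots\tau_{\sigma(n)}$ in which every toggle appears exactly once maps to $t_{\sigma(1)}t_{\sigma(2)}\cdots t_{\sigma(n)}$, and the correspondence is bijective in both directions. Because $\Phi$ preserves conjugacy, it then suffices to cite the fact recorded in Subsection~\ref{subsec:Coxeter} (originally \cite[Lemma~5.1]{strikerwilliams}) that any two Coxeter elements in $\calt_n$ are conjugate; indeed, the argument there provides an explicit sequence of admissible conjugations (by initial or final toggles) carrying any Coxeter element to $\varphi=\tau_n\cdots\tau_2\tau_1$. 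Transporting this sequence via $\Phi$ yields the desired conjugation in $\tog(\calz_n)$.

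For the "in particular" clause, one checks by inspection of the definitions that both $\pro=t_n\cdots t_2t_1$ and $\row$ (the product of all odd-indexed toggles followed by all even-indexed toggles, in the order prescribed by parity of $n$) use each toggle $t_i$ exactly once. Hence both are Coxeter elements of $\tog(\calz_n)$, and the general statement applies. There is no real obstacle here beyond making the isomorphism $\Phi$ explicit; the work has already been done on the $\cali_n$ side in Subsection~\ref{subsec:Coxeter}, and the zigzag-poset statement is essentially a translation via $\eta$.
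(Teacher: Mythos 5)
Your proposal is correct and follows essentially the same route as the paper: the paper likewise deduces the conjugacy of Coxeter elements in $\tog(\calz_n)$ from the corresponding fact for $\calt_n$ (via the admissible-conjugation argument of Striker and Williams) transported through the equivariant bijection $\eta$ of Proposition~\ref{correspond}. Your explicit construction of the isomorphism $\Phi$ just spells out what the paper leaves implicit.
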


In \cite[\S5]{strikerwilliams}, Striker and Williams prove that $\pro$ and $\row$ are conjugate for any rowed-and-columned poset.  However, in the case of $\tog(\calz_n)$, the conjugacy of any two Coxeter elements follows from the conjugacy of Coxeter elements in $\calt_n$ and Proposition~\ref{correspond}. Thus, the orbit structure of $J(\calz_n)$ under $\row$, or any other Coxeter element in $\tog(\calz_n)$, is the same as that of $\pro$, which by Proposition~\ref{correspond}, is the same as the orbit structure of $\varphi$ on $\cali_n$.



Using Proposition~\ref{correspond}, we can restate Corollary~\ref{cor:hom-Coxeter} for toggling in $J(\calz_n)$, as follows.
\begin{cor}
Let $w$ be a Coxeter element in $\tog(\calz_n)$.  Let $\chi_{a_j}:J(Z_n)\ra\{0,1\}$ be the indicator function of $a_j$. Then on $w$-orbits in $J(Z_n)$, the following statistics are homomesic.\begin{itemize}
\item If $n$ is odd, then $\chi_{a_j}-\chi_{a_{n+1-j}}$ is 0-mesic for every $j\in[n]$. Also $2\chi_{a_1}-\chi_{a_2}$ and $2\chi_{a_n}-\chi_{a_{n-1}}$ are both 1-mesic.
\item If $n$ is even, then $\chi_{a_j}+\chi_{a_{n+1-j}}$ is 1-mesic for every $j\in[n]$. Also $2\chi_{a_1}-\chi_{a_2}$ is 1-mesic and $2\chi_{a_n}-\chi_{a_{n-1}}$ is 0-mesic.
\end{itemize}
\end{cor}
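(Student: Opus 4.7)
The plan is to transport each homomesy claim across the equivariant bijection $\eta:\cali_n\to J(\calz_n)$ supplied by Proposition~\ref{correspond}, thereby reducing the corollary to the independent-set version already established in Corollary~\ref{cor:hom-Coxeter}. Reading off the definition of $\eta$, one sees at once that for every $S\in\cali_n$,
$$\chi_{a_j}(\eta(S))=\begin{cases} 1-\chi_j(S) & \text{if }j\text{ is odd,}\\ \chi_j(S) & \text{if }j\text{ is even,}\end{cases}$$
so each $\chi_{a_j}\circ\eta$ is $\chi_j$ or $1-\chi_j$, determined purely by the parity of $j$.

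Next I would observe that Proposition~\ref{correspond} upgrades to a group isomorphism $\calt_n\to\tog(\calz_n)$ sending $\tau_i\mapsto t_i$, and therefore sends Coxeter elements to Coxeter elements. Given a Coxeter element $w\in\tog(\calz_n)$, let $w'\in\calt_n$ be its preimage. Then $\eta$ carries $w'$-orbits of $\cali_n$ bijectively (and with preserved sizes) onto $w$-orbits of $J(\calz_n)$, so any statistic $F:J(\calz_n)\to\kk$ is $c$-mesic under $w$ if and only if $F\circ\eta$ is $c$-mesic under $w'$. This reduces every claim to showing that the pullback is homomesic on $\cali_n$ with the stated average.

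What remains is a short parity case-check. When $n$ is odd, the indices $j$ and $n+1-j$ share the same parity, so the pullback of $\chi_{a_j}-\chi_{a_{n+1-j}}$ equals $\pm(\chi_j-\chi_{n+1-j})$, which is $0$-mesic by Corollary~\ref{cor:hom-Coxeter}(1). When $n$ is even, $j$ and $n+1-j$ have opposite parities, so the pullback of $\chi_{a_j}+\chi_{a_{n+1-j}}$ becomes $1\pm(\chi_j-\chi_{n+1-j})$, hence $1$-mesic. For the endpoint statistics, $2\chi_{a_1}-\chi_{a_2}$ always pulls back to $2-(2\chi_1+\chi_2)$, which is $1$-mesic by Corollary~\ref{cor:hom-Coxeter}(2); and $2\chi_{a_n}-\chi_{a_{n-1}}$ pulls back to $2-(\chi_{n-1}+2\chi_n)$ when $n$ is odd (average $1$) and to $(\chi_{n-1}+2\chi_n)-1$ when $n$ is even (average $0$), exactly matching the two stated cases.

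I do not anticipate a genuine obstacle here: all the dynamical content was packaged into Proposition~\ref{correspond} and Corollary~\ref{cor:hom-Coxeter}. The only subtlety is the parity bookkeeping induced by the asymmetric way $\eta$ treats odd-indexed vertices (flipped) versus even-indexed ones (preserved), which must be tracked carefully to get the signs and constants correct in each case.
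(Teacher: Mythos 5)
Your proposal is correct and follows exactly the paper's route: pull the statistics back along the equivariant bijection $\eta$ using the identity $\chi_{a_j}\circ\eta=\chi_j$ or $1-\chi_j$ according to the parity of $j$, and reduce to Corollary~\ref{cor:hom-Coxeter}. The only difference is that you carry out the parity bookkeeping explicitly (and correctly), whereas the paper leaves that case-check to the reader.
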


\begin{proof}
From the definition of $\eta$, it is clear that for any $S\in\cali_n$, $$\chi_{a_j}(\eta(S))=\left\{\begin{array}{ll}
\chi_j(S) &\text{if }j\text{ is even}\\
1-\chi_j(S)
&\text{if }j\text{ is odd}\end{array}\right..$$ The rest of the proof follows by restating Corollary~\ref{cor:hom-Coxeter} into the language of $J(\calz_n)$ via Proposition~\ref{correspond}.
\end{proof}

Notice that our statements above are significantly more complicated to state, forcing us to
divide into odd and even cases.  This would also make direct proofs of them in the
$J(\calz_{n})$-setting more unwieldy.  It is much easier to handle them via translation to
the $\cali_{n}$ context.  This shows the efficacy of Striker's notion of generalized
toggling.

It is well-known and not hard to see that for any graded poset $P$ of rank $r$, there is a
rowmotion orbit on $J(P)$ of size $r+1$ generated by the empty ideal, where $\row^{i}(\O
)$ consists of all elements of rank $\leq i-1$.  In particular, $J(\calz_{n})$ has a
rowmotion orbit of size 3. It is not directly obvious that this is the only orbit of this size.
But since the orbit
structure of $\row$ is the same as that of $\varphi$ on $\cali_n$, uniqueness follows
from  Proposition~\ref{orbit3}.




In other proven examples of homomesy for rowmotion on posets, the map generally has a small
order and a cyclic sieving phenomenon has been found.  However, the rowmotion map on
$J(\calz_n)$ has a large order, and thus a natural cyclic sieving result is unlikely, which
makes the homomesy for this poset particularly interesting.

\bibliography{bibliography}

\newcommand{\etalchar}[1]{$^{#1}$}
\begin{thebibliography}{EFGJMPR}

\bibitem[AST13]{ast}
D.~Armstrong, C.~Stump, and H.~Thomas.
\newblock A uniform bijection between nonnesting and noncrossing partitions.
\newblock {\em Transactions of the American Mathematical Society},
  365(8):4121--4151, 2013.
\newblock Also available as \arxiv{1101.1277v2}.

\bibitem[BB05]{bjornerbrenti}
A.~Bj{\"o}rner and F.~Brenti.
\newblock {\em Combinatorics of {C}oxeter groups}.
\newblock Springer Verlag, New York, 2005.

\bibitem[BS74]{brouwer1974period}
A.~Brouwer and L.~Schrijver.
\newblock On the period of an operator, defined on antichains.
\newblock {\em Stichting Mathematisch Centrum. Zuivere Wiskunde}, (ZW
  24/74):1--13, 1974.

\bibitem[CF95]{cameronfonder}
P.~Cameron and D.~Fon{-}der{-}Flaass.
\newblock Orbits of antichains revisited.
\newblock {\em European J. Combin.}, 16(6):545--554, 1995.

\bibitem[EE09]{eriksson2009conjugacy}
H.~Eriksson and K.~Eriksson.
\newblock Conjugacy of {C}oxeter elements.
\newblock {\em Electron. J. Combin.}, 16(2):\#R4, 2009.

\bibitem[EFGJMPR]{efgjmpr}
D.~Einstein, M.~Farber, E.~Gunawan, M.~Joseph, M.~Macauley, J.~Propp, and
  S.~Rubinstein-Salzedo.
\newblock Noncrossing partitions, toggles, and homomesies.
\newblock {\em Electron. J. Combin.}, 23(3), 2016.
\newblock Also available at \arxiv{1510.06362v2}.

\bibitem[EP13]{einpropp}
D.~Einstein and J.~Propp.
\newblock Combinatorial, piecewise-linear, and birational homomesy for products
  of two chains.
\newblock {\em \arxiv{1310.5294}}, 2013.

\bibitem[Had16]{shahrzad}
S.~Haddadan.
\newblock Some instances of homomesy among ideals of posets.
\newblock {\em \arxiv{1410.4819v3}}, 2016.

\bibitem[Jos17]{antichain-toggling}
M.~Joseph.
\newblock Antichain toggling and rowmotion.
\newblock {\em \arxiv{1709.09331}}, 2017.

\bibitem[Pan09]{panyushev}
D.~Panyushev.
\newblock On orbits of antichains of positive roots.
\newblock {\em European J. Combin.}, 30(2):586--594, 2009.
\newblock Also available at \arxiv{0711.3353v2}.

\bibitem[Pet15]{peterseneulerian}
T.~K. Petersen.
\newblock {\em Eulerian Numbers}.
\newblock Springer, New York, 2015.

\bibitem[PR15]{propproby}
J.~Propp and T.~Roby.
\newblock Homomesy in products of two chains.
\newblock {\em Electron. J. Combin.}, 22(3), 2015.
\newblock Also available at \arxiv{1310.5201v5}.

\bibitem[Rob16]{robydac}
T.~Roby.
\newblock Dynamical algebraic combinatorics and the homomesy phenomenon.
\newblock In {\em Recent Trends in Combinatorics}, pages 619--652. Springer,
  2016.
\newblock Also available at
  \url{http://www.math.uconn.edu/~troby/homomesyIMA2015Revised.pdf}.

\bibitem[RSW04]{csp}
V.~Reiner, D.~Stanton, and D.~White.
\newblock The cyclic sieving phenomenon.
\newblock {\em Journal of Combinatorial Theory, Series A}, 108(1):17--50, 2004.

\bibitem[S{\etalchar{+}}16]{sage}
W.\thinspace{}A. Stein et~al.
\newblock {\em {S}age {M}athematics {S}oftware ({V}ersion 7.3)}.
\newblock The Sage Development Team, 2016.
\newblock \url{http://www.sagemath.org}.

\bibitem[Sag11]{cspsagan}
B.~Sagan.
\newblock The cyclic sieving phenomenon: A survey.
\newblock {\em London Math. Soc. Lecture Note Ser}, 392:183--233, 2011.
\newblock Also available at \arxiv{1008.0790v3}.

\bibitem[Sch63]{Sch63}
M.~P. Sch{\"u}tzenberger.
\newblock Quelques remarques sur une construction de {S}chensted.
\newblock {\em Math. Scand.}, 12:117--128, 1963.

\bibitem[Shi97]{shi1997enumeration}
J.~Shi.
\newblock The enumeration of {C}oxeter elements.
\newblock {\em J. Algebraic Combin.}, 6(2):161--171, 1997.

\bibitem[Slo16]{oeis}
N.~J.~A. Sloane.
\newblock {\em OEIS, Online Encyclopedia of Integer Sequences}, 2016.
\newblock \url{https://oeis.org/}.

\bibitem[Sta99]{ec2}
R.~Stanley.
\newblock {\em Enumerative combinatorics, volume 2}.
\newblock Cambridge University Press, 1999.

\bibitem[Sta11]{ec1ed2}
R.~Stanley.
\newblock {\em Enumerative combinatorics, volume 1, 2nd edition}.
\newblock Cambridge University Press, 2011.
\newblock Also available at \url{http://math.mit.edu/~rstan/ec/ec1/}.

\bibitem[Str15]{strikerRS}
J.~Striker.
\newblock The toggle group, homomesy, and the {R}azumov-{S}troganov
  correspondence.
\newblock {\em Electron. J. Combin.}, 22(2):P2--57, 2015.
\newblock Also available at \arxiv{1503.08898v1}.

\bibitem[Str16]{strikergentog}
J.~Striker.
\newblock Rowmotion and generalized toggle groups.
\newblock {\em arXiv:1601.03710}, 2016.

\bibitem[SW12]{strikerwilliams}
J.~Striker and N.~Williams.
\newblock Promotion and rowmotion.
\newblock {\em European J. Combin.}, 33:1919--1942, 2012.
\newblock Also available at \arxiv{1108.1172v3}.

\end{thebibliography}
\bibliographystyle{halpha}

\end{document}